\tikzset{
	block/.style={
		draw, 
		rectangle, 
		minimum height=1cm, 
		minimum width=2cm, align=center
	}, 
	line/.style={->,>=latex'}
}
\definecolor{mygray}{gray}{.9}
\newtheorem{proposition}{Proposition}
\newtheorem{lemma}{Lemma}
\newtheorem{corollary}{Corollary}
\newcommand{\R}{\mathbb{R}}
\numberwithin{equation}{section}
\numberwithin{definition}{section}
\numberwithin{remark}{section}
\numberwithin{theorem}{section}
\numberwithin{proposition}{section}
\numberwithin{lemma}{section}
\numberwithin{remark}{section}
\numberwithin{example}{section}
\numberwithin{figure}{section}
\numberwithin{conjecture}{section}
\numberwithin{table}{section}
\numberwithin{corollary}{section}
\begin{document}
\pagestyle{plain}
\title{\bf Approximating a spatially-heterogeneously mass-emitting
object by multiple point sources in a diffusion model}
\author{Qiyao Peng$^{1,2*}$, Sander C. Hille$^2$}
\date{{\footnotesize \it
    $^1$ Mathematics for AI in Real-world Systems, School of Mathematical Sciences, Lancaster University, LA1 4YF, Lancaster, United Kingdom. qiyao.peng[AT]lancaster.ac.uk\\%
    $^2$ Mathematical Institute, Faculty of Science, Leiden University. Einsteinweg 55, 2333 CC, Leiden, The Netherlands}\\[2ex]%
    \today
}
\maketitle

\begin{abstract} Various biological cells secrete diffusing chemical compounds into their environment for communication purposes. Secretion usually takes place over the cell membrane in a spatially heterogeneous manner. Mathematical models of these processes will be part of more elaborate models, e.g. of the movement of immune cells that react to cytokines in their environment. Here, we compare two approaches to modelling of the secretion-diffusion process of signalling compounds. The first is the so-called {\it spatial exclusion model}, in which the intracellular space is excluded from consideration and the computational space is the extracellular environment. The second consists of {\it point source models}, where the secreting cell is replaced by one or more non-spatial point sources or sinks, using -- mathematically --  Dirac delta distributions. We propose a multi-Dirac approach and provide explicit expressions for the intensities of the Dirac distributions. We show that two to three well-positioned Dirac points suffice to approximate well a temporally constant but spatially heterogeneous flux distribution of compound over the cell membrane, for a wide range of variation in flux density and diffusivity. The multi-Dirac approach is compared to a single-Dirac approach that was studied in previous work. Moreover, an explicit Green's function approach is introduced that has significant benefits in circumventing numerical instability that may occur when the Dirac sources have high intensities. 
\vskip 0.1cm

\noindent
{\bf Keywords:} Model approximation, diffusion equation, Dirac delta distributions, singularity removal, inhomogeneous flux density
\end{abstract}

\section{Introduction}
\noindent
Living organisms, from single cells to animals and plants, communicate with each other, for various purposes, by means of secreting and detecting chemical compounds in their environment, which get spread by diffusive or advective transport, or both. For example, plants may signal others warning for predating insects and herbivores \citep{Furstenberg_Hagg_2013}; animals may secrete pheromones to attract partners for mating \citep{menini2009neurobiology}; single cells may thus coordinate and accomplish tasks that they cannot do it alone, for instance, in the development of tissues, organs and whole organisms \citep{Bender:2020,Karban:2015,Perbal_2003}, and for regulating cell behaviours, as in the immune system where immune cells are attracted to target areas by means of cytokines \citep{Harvanov__2023, janeway2001immunobiology, Neitzel2014Cell}.

Means of communication are not limited to the diffusing compounds that are the subject of modelling in this paper. At the cellular level, various types of communication have been identified \citep{cooper2023cell}.  One, other than by signalling molecules, is cell-matrix interaction. In any case, communication is realized through the extracellular environment, with the cell membrane, which defines the boundary of the cell and separates the intracellular and extracellular environments, in particular specific proteins embedded within, realizing detection and secretion of compounds. Over the cell membrane, receptors, ion channel proteins and transporter proteins are distributed inhomogeneously \citep{cooper2022cell}. The latter two are responsible for the permeability of the membrane for various types of molecules, such as ions, cytokines, nutrients, etc. Compounds can be secreted as well by means of exocytosis of vesicles that contain these compounds at high concentration, in the intermediate `protein free' membrane space. This process is complex and highly regulated \citep{mierke2020cellular}. For a more detailed review of the cell membranes, see e.g. \citet{buehler2015cell, cooper2022cell}.

Thus, when studying cell-to-cell communication using mathematical modelling one considers a (reaction-) diffusion equation for the compound in the environment, possibly with an advective term added if that type of transport is relevant. Here, we shall limit attention to diffusion only. 

Depending on the spatial scale of a study, one encounters models in which either the individual cells are small in number and viewed as non-negligibly spatially extended, or there is a (very) large number of cells, of negligible size. 
The latter leads to a continuum description on a spatial domain $\Omega$ with reaction terms in the diffusion equation on $\Omega$ that represent detection and secretion of signalling compound essentially `everywhere'. The former leads to exclusion from $\Omega$ of the space taken by the interior and cell membrane of all cells, represented by the closed domain $\overline{\Omega}_C$, say. We call this a {\it spatial exclusion model}; see Figure~\ref{Fig_schematic}(a) for a schematic representation. Secretion of compound is modelled then by flux conditions on the boundary $\partial\Omega_C$, i.e. the totality of all cell membranes. Because of the heterogeneity of localisation of proteins and exocytosis events on the cell membrane, the associated flux densities in the model are expected to be spatially inhomogeneous over each cellular part (i.e. connected component) of $\partial\Omega_C$.

Biological cells have varying shapes. For mathematical convenience, each individual cell may be given a regular, spherical, shape in a spatial exclusion model, as an approximation of a model with detailed cell shape. However, a model with such regular shape and inhomogeneous flux density distribution over the boundary may yield an acceptable approximating solution to a model with detailed, realistic cellular shapes (and inhomogeneous flux density); see Figure~\ref{Fig_schematic}(b). In this paper, although it is an interesting research question, we shall not consider the quality of approximation of solutions between spatial exclusion models with regular and realistic shaped cells. Instead, we shall be concerned with studying approaches towards approximation of a spatial exclusion model with a regularly shaped cell and inhomogeneous flux density over its boundary by a so-called {\it point source model}. The cell is then replaced by a finite number of point sources (see Figure~\ref{Fig_schematic}(b) -- blue points) or sinks (red points).

\begin{figure}[h!]
    \centering
    \subfigure[Illustration of the computational domain in a spatial exclusion model with circular cell]{\includegraphics[width = 0.48\textwidth]{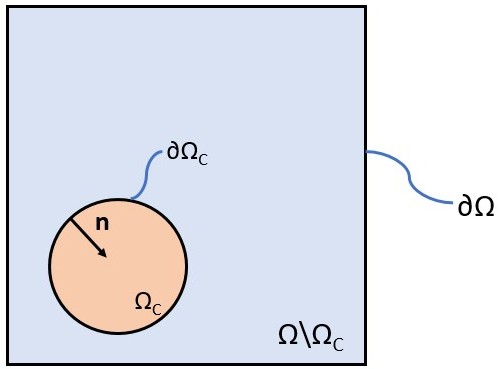}}\hfill
    \subfigure[Model-wise conversion of an irregular-shaped cell into a circular cell, then to mass-emitting point(s)]{
    \includegraphics[width = 0.48\textwidth]{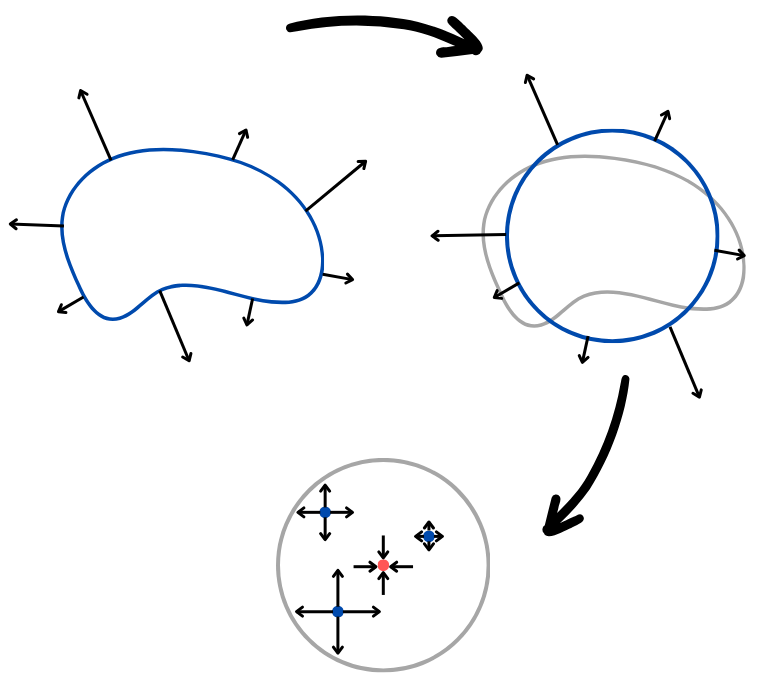}
    }
    \caption{Geometric configuration of the spatial domain in the models. Panel (a): A schematic presentation of one circular cell $\Omega_C$ embedded in the domain $\Omega$. The boundaries of the cell and the entire domain are denoted by $\partial\Omega_C$ and $\partial\Omega$, respectively. We define the normal vector $\boldsymbol{n}$ of $\partial\Omega_C$ pointing towards the center of the cell. This figure is taken from \citet{Peng2023}. Panel (b): A biological cell has a more irregular shape.  The heterogeneous flux density over its boundary can be converted into a circular cell with (different) heterogeneous flux density. Then, using the method proposed in this article, the circular cell can be converted to mass-emitting points, i.e. sources (blue) and sinks (red), with the cell boundary now virtual. }
    \label{Fig_schematic}
\end{figure}

  The quality of approximation is assessed quantitatively by numerical analysis. In the latter model, the cell is replaced by a small number of point sources and sinks with particular intensities for mass exchange, positioned within the part of the full domain $\Omega$ that belonged to the cell $\Omega_C$ in the spatial exclusion model. In the model, this is realized by adding Dirac delta distributions with particular intensities as reaction terms to the diffusion equation on the full domain $\Omega$. 

The replacement by a point source model with Dirac delta distributions is of interest for mathematical convenience as well. It helps to improve computational efficiency, in particular when there are many cells present, but not as many that a continuum approximation is acceptable. Moreover, in a setting where cells start moving, the application of a finite-element method (FEM) for numerical analysis will require regeneration of the mesh over time steps, which will be a large computational burden. Moving point sources are expected to be less computationally demanding. Computational efficiency will be relevant in situations where quick, adequate simulations are needed, for example for digital twins in a medical setting \citep{Haleem_2023, Sun_2023}. 

It then becomes relevant to determine the adequacy of the point source model in approximating a spatial exclusion model, with inhomogenous boundary conditions. A comparison of solutions have so far been made theoretically in \citet{HMEvers2015} and numerically in our previous work \citet{Peng2023}, to the best of our knowledge. This paper starts investigating approximation when inhomogeneous flux densities over the cell membrane occur.

Note that a point source -- by necessity -- will spread mass (or remove mass for a sink) in symmetric fashion from its neighbourhood. In this paper we propose to replace a spatially extended cell in that case by a finite number of well-located point sources (and sinks). A key question is, to determine intensities for the associated Dirac delta distributions in the model. We describe in Section~\ref{Sec_Multi} a method to do so and then validate this approach numerically, quantitatively, in the following sections. We consider a two-dimensional setting for the domain $\Omega$. Any inhomogenous flux density over the circular boundary $\Omega_C$ is a spatially $2\pi-$periodic function in angular coordinate and can hence be decomposed as a superposition of Fourier modes. We consider the first few modes only. We expect that high frequency modes, due to the regularizing effect of diffusion, will not contribute much to improvement of quality of approximation. It remains a practical question for application how many modes are sufficient, given diffusion constant and size of cells. This is a topic for further research.
\vskip 2mm

This paper is structured as follows: Section~\ref{Sec_math_formulation} presents the mathematical formulation of the spatial exclusion and point source model, and a corresponding phrasing of the main question. Section~\ref{Sec_Multi} proposes the idea of using multiple Dirac points to represent the cell, if the flux density is a spatially inhomogeneous function, particularly the sinusoidal function. As a consequence, a natural question is how to select the intensity of these Dirac points. In Section~\ref{Sec_Amplitude}, we start with the case of polarized flux density, represented by a $2\pi-$periodic sinusoidal function to investigate the locations and the intensity of the off-centre Dirac points, then later we extend this to the general case. Section~\ref{Sec_Comp_Setup} introduces the explicit Green's function approach to solve the point source model, and all models are non-dimensionalized. Afterwards, Section~\ref{Sec_Homogeneity} introduces a homogeneity indicator to determine the relative difference between the solutions to the homogeneous and inhomogeneous flux density in the spatial exclusion model. Numerical results are presented in Section~\ref{Sec_Results} and Monte Carlo simulations, investigating the effects on the solutions of changes in parameters, are conducted in Section~\ref{Sec_MC_n_2}. In Section~\ref{Sec_Mesh_Comparison}, we compare the classical FEM and the explicit Green's function approach to solve the point source model, on a fine and coarse mesh, respectively. Finally, Section~\ref{Sec_Conclu} delivers conclusions and discussion.

\section{Mathematical formulation of the question}\label{Sec_math_formulation}
\noindent
In our previous work \citep{Peng2023}, we considered the situation that the cell boundary releases diffusive compounds into the direct environment in a homogeneous manner, i.e. the flux density over the cell boundary is constant. Two models were discussed (namely, \textit{the spatial exclusion model} and \textit{the point source model}) that describe the secretion and subsequent diffusion of compounds in the environment of  cells, particularly the consistency between the solutions to each model. The spatial exclusion model separates the intracellular and extracellular environment, prescribes flux conditions on the boundary separating these, and the intracellular part is excluded from the computational domain. The point source model replaces each spatially extended cell by a point source, by means of a Dirac delta distribution (also called a Dirac measure). The location of this point source we call a {\it Dirac point}. 

From the numerical results, we observed that for a relatively small diffusion coefficient, there is a systematic time delay between the two approaches. Hence, to compensate for this discrepancy, in the point source approach, we used the Gaussian kernel (which is also the form of the fundamental solution to the diffusion equation) as the initial condition of the intracellular environment. Furthermore, we investigated how to determine the intensity and the variance of this Gaussian kernel.

Here, the set-up is similar to \citet{Peng2023}, but we will now consider approximating spatially inhomogeneous flux densities over the cell boundary in the spatial exclusion model, instead of constant flux densities. These prescribed flux densities are still time-independent. Focus point is the quality of approximation that can be reached by a point source model.

\subsection{Spatial exclusion and point source models}
\noindent
We consider an open bounded two-dimensional domain $\Omega\subset \mathbb{R}^2$, where there are a few non-overlapping cells denoted by $\Omega_C$. Then the domain $\Omega\setminus\bar{\Omega}_C$ is considered as the extracellular environment; see Figure~\ref{Fig_schematic}(b). The key difference between the spatial exclusion and point source model is whether taking the cell domain $\Omega_C$ as part of the entire domain or not.

The spatial exclusion model excludes the cell domain from the entire domain, and then the secretion of the compounds is described as a Neumann boundary condition over $\partial\Omega_C$. The model is given by
\begin{equation}
	\label{Eq_BVP_hole}
	(\rm BVP_S)\quad\left\{
	\begin{aligned}
	\frac{\partial u_S(\boldsymbol{x},t)}{\partial t} - D\Delta u_S(\boldsymbol{x},t) &= 0, &\mbox{in $\Omega\setminus\bar{\Omega}_C, t>0$,}\\
	D\nabla u_S(\boldsymbol{x},t)\cdot\boldsymbol{n} &= \phi(\boldsymbol{x},t), &\mbox{on $\partial\Omega_C, t>0$,}\\
	D\nabla u_S(\boldsymbol{x},t)\cdot\boldsymbol{n} &= 0, &\mbox{on $\partial\Omega, t>0$,}\\
	u_S(\boldsymbol{x}, 0) &= u_0(\boldsymbol{x}), &\mbox{in $\Omega\setminus\bar{\Omega}_C$,}
	\end{aligned}
	\right.
\end{equation}
where $\boldsymbol{n}$ is the outward pointing unit normal vector to the domain boundary of $\Omega\setminus\bar{\Omega}_C$, see Figure~\ref{Fig_schematic}(b). We assume that the flux density $\phi(\boldsymbol{x},t)$ is non-negative at $\boldsymbol{x}\in\partial\Omega_C$ where there is a flux of compound into the environment $\Omega\setminus\bar{\Omega}_C$. This ensures that solutions with positive initial condition remain positive for all time. In contrast to \citet{Peng2023}, we consider specifically fluxes that are spatially inhomogeneous.

For the numerical simulations, on time domain $[0,T]$, Finite-Element Method (FEM) \citep{van2005numerical} is used. Hence, we consider the weak solution concept for $(\rm BVP_S)$, which is given by 
\begin{equation*}
	    (\rm WF_S)\left\{
	    \begin{aligned}
	    &\text{Find $u_S(\boldsymbol{x},t)\in  L^2\bigl((0,T),H^1(\Omega\setminus\bar{\Omega}_C)\bigr)\cap H^1\bigl((0,T), H^1(\Omega\setminus\bar{\Omega}_C)^*\bigr)$, such that}\\
	    &\int_{\Omega\setminus\bar{\Omega}_C}\frac{\partial u_S(\boldsymbol{x},t)}{\partial t}v_1(\boldsymbol{x},t)d\Omega + \int_{\Omega\setminus\bar{\Omega}_C} D\nabla u_S(\boldsymbol{x},t)\cdot\nabla v_1(\boldsymbol{x},t) d\Omega\\
	    &\qquad - \int_{\partial\Omega_C}\phi(\boldsymbol{x},t) v_1(\boldsymbol{x},t)d\Gamma=0, \text{ for any $v_1(\boldsymbol{x},t)\in  L^2\bigl([0,T], H^1(\Omega\setminus\bar{\Omega}_C)\bigr)$.}
	    \end{aligned}
	\right.
\end{equation*}
(See \citet{Yang-Peng-Hille:2024} Theorem 1).
Here $d\Omega$ is the restriction of Lebesgue measure on $\R^2$ to $\Omega$ and $d\Gamma$ denotes the surface measure on $\partial\Omega_C$, which is so normalized that the Divergence Theorem \citep{keski2022mathematical} holds without additional constant. 

Furthermore, the initial-boundary value problem defined by point sources is given by
\begin{equation}
	\label{Eq_BVP_dirac}
	(\rm BVP_P)\quad\left\{
	\begin{aligned}
	\frac{\partial u_P(\boldsymbol{x},t)}{\partial t} - D\Delta u_P(\boldsymbol{x},t) &= \sum_{i = 1}^{N}\Phi_i(t)\delta(\boldsymbol{x}-\boldsymbol{x}^{(i)}), &\mbox{in $\Omega, t>0$,}\\
	D\nabla u_P\cdot\boldsymbol{n} &= 0, &\mbox{on $\partial\Omega, t>0$,}\\
	u_P(\boldsymbol{x}, 0) &= \bar{u}_0(\boldsymbol{x}), &\mbox{in $\Omega, t=0$.}
	\end{aligned}
	\right.
\end{equation}
Here, the source intensity function $\Phi_i(t)$ describes the flux of mass per unit time from the source at location $\boldsymbol{x}^{(i)}$ when it is positive (otherwise it is a sink).  The index $i$ identifies a point source or sink, of which there will be multiple associated to the same cell in the spatial exclusion model later.

Again, we consider weak solutions. In this case, $u_P(\boldsymbol{x},t)$ cannot be in $H^1(\Omega)$, because of the singularities at the Dirac points. It can be in $W^{1,p}(\Omega)$ for any $p\in(1,2)$, see \citet{Yang-Peng-Hille:2024} for details, Theorem 2 in particular. The restriction of $u_P(\boldsymbol{x},t)$ to $\boldsymbol{x}$ in the environment $\Omega\setminus\Omega_C$ is $H^1$ though \cite{HMEvers2015}. This amounts to the following weak form -- formulated for multiple Dirac points and some $p\in (1,2)$ (with $q$ the usual conjugate exponent: $\displaystyle\frac{1}{p}+\frac{1}{q}=1$): 
\begin{equation*}
    (WF_P)\left\{
    \begin{aligned}
    &\text{Find $u_P(\boldsymbol{x},t)\in  C^0\bigl([0,T], W^{1,p}(\Omega)\bigr)$, such that}\\
    &\int_{\Omega}\frac{\partial u_P(\boldsymbol{x},t)}{\partial t}v_2(\boldsymbol{x},t)d\Omega 
    +  \int_{\Omega} D\nabla u_P(\boldsymbol{x},t)\cdot\nabla v_2(\boldsymbol{x},t) d\Omega \\
    & \qquad =\int_{\Omega}\sum_{i = 1}^N\Phi_i(t)\delta(\boldsymbol{x} - \boldsymbol{x}^{(i)})v_2(\boldsymbol{x},t)d\Omega,\\
    &\text{for any $v_2(\boldsymbol{x},t)\in  L^2\bigl([0,T], W^{1,q}(\Omega)\bigr)$.}
    \end{aligned}
\right.
\end{equation*}
Note, that according to the Sobolev Imbedding Theorem (see e.g. \citet{Adams-Fournier:2003}, Theorem 4.12, p.85) for the indicated $p$, one has $q>2$. Then, $W^{1,q}(\Omega)$ can be viewed as consisting of continuous functions on $\overline{\Omega}$. Hence, evaluation of test functions at Dirac points is well-defined. Existence and uniqueness of (weak) solutions to the point source model for $\bar{u}_0(\boldsymbol{x})\in W^{1,p}(\Omega)$ is furnished by \cite{Amann:2001,Amann-Quittner:2004}; see \citet{Yang-Peng-Hille:2024} for details.

\subsection{Measures for quantitative comparison}
\noindent
In \citet{Peng2023}, a proposition is proven that reveals the relationship between the difference in the two models and the boundary flux over $\partial\Omega_C$, and the proposition has been proved analytically in the setting of FEM. The proposition is stated as follows:
\begin{proposition}
    \label{Prop_Condition}
	Denote by $u_S(\boldsymbol{x},t)$ and $u_P(\boldsymbol{x},t)$ the weak solutions to the spatial exclusion model $(\rm BVP_S)$ and the point source model $(\rm BVP_P)$, respectively, and let $\partial\Omega_C$ be the boundary of the cells, from which the compounds are released, with normal vector $\boldsymbol{n}$ pointing into $\Omega_C$. Then
    \begin{align}
        \frac{1}{2}\frac{d}{dt} \bigl\| u_S-u_P \bigr\|^2_{L^2(\Omega\setminus\Omega_C)}\ &=\ -D \int_{\Omega\setminus\Omega_C} \bigl| \nabla(u_S -u_P) \bigr|^2 d\Omega \label{eq:L2-norm differnce}\\
        &\qquad\qquad +\ \int_{\partial\Omega_C} (u_s-u_P)(\phi-D\nabla u_P\cdot \boldsymbol{n})\,d\Gamma.  \nonumber
    \end{align}
    Assume moreover, that $u_S(\cdot,0)=u_P(\cdot,0)$ a.e. on $\Omega\setminus\Omega_C$. Then, $u_S(\boldsymbol{x},t) = u_P(\boldsymbol{x},t)$ a.e. in $\Omega\setminus\bar{\Omega}_C\times [0,\infty)$ if and only if 
    \begin{equation}
    \label{Eq_PropCondition}
    \phi(\boldsymbol{x},t) - D\nabla u_P(\boldsymbol{x},t)\cdot\boldsymbol{n}= 0, \qquad\mbox{ a.e. on $\partial\Omega_C\times[0,\infty)$.}
    \end{equation}
\end{proposition}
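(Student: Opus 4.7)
The plan is to derive the energy identity (\ref{eq:L2-norm differnce}) first, and then deduce the iff characterization from it. I would start from
\[
\tfrac{1}{2}\frac{d}{dt}\bigl\|u_S-u_P\bigr\|^2_{L^2(\Omega\setminus\Omega_C)} \;=\; \int_{\Omega\setminus\bar{\Omega}_C}(u_S-u_P)\,\partial_t(u_S-u_P)\,d\Omega,
\]
which is justified by the temporal regularity afforded by the two weak formulations (and, for $u_P$, by the fact that its restriction to $\Omega\setminus\bar{\Omega}_C$ is $H^1$, as noted in the excerpt). Since the Dirac sources in $(\text{BVP}_P)$ are supported inside $\Omega_C$, both $u_S$ and $u_P$ satisfy the homogeneous heat equation on $\Omega\setminus\bar{\Omega}_C$ in the distributional sense, so I may substitute $\partial_t(u_S-u_P)=D\Delta(u_S-u_P)$ under the integral.

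Next I would apply Green's first identity on $\Omega\setminus\bar{\Omega}_C$, obtaining
\[
D\!\int_{\Omega\setminus\bar{\Omega}_C}\!(u_S-u_P)\Delta(u_S-u_P)\,d\Omega \;=\; -D\!\int_{\Omega\setminus\bar{\Omega}_C}\!\bigl|\nabla(u_S-u_P)\bigr|^2 d\Omega \;+\; \int_{\partial(\Omega\setminus\bar{\Omega}_C)}\!(u_S-u_P)\,D\nabla(u_S-u_P)\cdot\boldsymbol{\nu}\,d\Gamma,
\]
where $\boldsymbol{\nu}$ is the outward unit normal of $\Omega\setminus\bar{\Omega}_C$. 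On $\partial\Omega$ both models satisfy the homogeneous Neumann condition so that contribution vanishes; on $\partial\Omega_C$ one has $\boldsymbol{\nu}=\boldsymbol{n}$ (the stated inward normal of $\Omega_C$), and the flux boundary condition for $u_S$ in $(\text{BVP}_S)$ gives $D\nabla(u_S-u_P)\cdot\boldsymbol{n} = \phi - D\nabla u_P\cdot\boldsymbol{n}$. Substituting yields the identity (\ref{eq:L2-norm differnce}).

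The iff then follows quickly. If (\ref{Eq_PropCondition}) holds, the boundary integral in (\ref{eq:L2-norm differnce}) vanishes identically in time, so $\frac{d}{dt}\bigl\|u_S-u_P\bigr\|^2_{L^2} \leq 0$; together with the assumption $u_S(\cdot,0)=u_P(\cdot,0)$ a.e., this forces $u_S=u_P$ a.e.\ on $(\Omega\setminus\bar{\Omega}_C)\times[0,\infty)$. Conversely, if $u_S=u_P$ a.e., then $\nabla u_S = \nabla u_P$ a.e.\ in $\Omega\setminus\bar{\Omega}_C$ and, passing to the normal trace on $\partial\Omega_C$, one gets $D\nabla u_S\cdot\boldsymbol{n} = D\nabla u_P\cdot\boldsymbol{n}$; inserting the Neumann data of $(\text{BVP}_S)$ delivers (\ref{Eq_PropCondition}).

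The main technical obstacle I anticipate is justifying every step above given the limited \emph{global} regularity of $u_P$. Since $u_P$ is only $W^{1,p}(\Omega)$ for some $p\in(1,2)$, the integration by parts and the interpretation of the normal trace $D\nabla u_P\cdot\boldsymbol{n}$ on $\partial\Omega_C$ require care. The way around this is to exploit that the singularities of $u_P$ sit strictly inside $\Omega_C$: on a neighbourhood of $\partial\Omega_C$ within the extracellular region, $u_P$ inherits interior parabolic regularity from the homogeneous heat equation, which promotes the restriction of $\nabla u_P$ to $\partial\Omega_C$ to a well-defined trace (in $L^2$, or otherwise via $H^{1/2}$--$H^{-1/2}$ duality using $-\Delta u_P=-\partial_t u_P/D \in L^2$ locally). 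This makes both Green's identity and the subsequent trace manipulations rigorous, while the time-differentiation under the integral is handled by the $H^1$-in-time regularity built into the weak-solution class.
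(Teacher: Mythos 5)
Your argument is correct and is the standard energy/Green's-identity derivation; the paper itself does not reprove Proposition~\ref{Prop_Condition} but cites \citet{Peng2023} for it, and your route (test the difference equation with $u_S-u_P$ on $\Omega\setminus\bar{\Omega}_C$, use the Neumann data and the sign convention $\boldsymbol{\nu}=\boldsymbol{n}$ on $\partial\Omega_C$, then Gr\"onwall-type monotonicity for one direction and equality of normal traces for the converse) is exactly the intended one. Your closing remarks on the trace of $\nabla u_P$ on $\partial\Omega_C$ correctly identify the only delicate point and resolve it appropriately via interior regularity away from the Dirac points.
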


In this manuscript, the measures that we are interested in for comparison of the solutions to the two models are firstly the difference in $L^2-$ and $H^1-$norm on the extracellular environment $\Omega\setminus\Omega_C$. In view of Proposition~\ref{Prop_Condition}, we also consider the difference between prescribed flux density on the cell boundary $\partial \Omega_C$ and flux density on this set in the point source model in $L^2$-norm, i.e. $\bigl\| \,\phi(\cdot,t) - D\nabla u_P(\cdot,t)\cdot\boldsymbol{n}\bigr\|_{L^2(\partial\Omega_c)}$, as a function of time. Moreover, we investigate the total boundary flux deviation $c^*(t)$ over the time interval $[0,t]$, given by
\begin{equation}
    c^*(t) := \int_0^t  \bigl\| \,\phi(\cdot,s) - D\nabla u_P(\cdot,s)\cdot\boldsymbol{n}\bigr\|_{L^2(\partial\Omega_c)}\, ds,
\end{equation}
which was defined and studied in \citet{HMEvers2015}.

In summary, to maintain good agreement between the two solutions in the extracellular environment, the deviation in flux over $\partial\Omega_C$ needs to be minimized. In the point source model the flux over the -- now virtual -- cell boundary is a result of diffusion from the source. Hence, a natural question is, how to define the intensities $\Phi_i(t)$ of the sources at the Dirac points in the point source model such that the flux condition in Equation~\eqref{Eq_PropCondition} can be best approximated.

\section{Approximation with Multiple Dirac Points per Cell}\label{Sec_Multi}
\label{Sec_Amplitude}
\noindent
In \citet{Peng2023}, we replaced a cell with constant homogeneous flux over the boundary in the spatial exclusion model by a single Dirac point in the point source model. As a next step, because in reality cells have an irregular shape and secrete compounds spatially inhomogenously over their membrane, we now consider how to best represent an inhomogenous flux over the boundary by Dirac sources and sinks in a point source model. Thus, in this study, we consider a prescribed spatially inhomogeneous -- but time-independent -- flux density distribution $\phi(\boldsymbol{x},t)=\phi(\boldsymbol{x})$ over the circular boundary of a single cell centred at $\boldsymbol{x}^{(0)}=(x_C,y_C)$ of radius $R$. We assume $\phi(\boldsymbol{x})$ to be non-negative everywhere. 

Locally negative flux density, representing uptake of compound, requires special care. One should realize, that in any realistic model, positivity of solution must be guaranteed. This holds, loosely speaking, when mass can only be taken up where there is mass. Hence, the uptake flux cannot be simply prescribed, but must depend on the solution close to the Dirac point. Hence, a well-designed approach is required. What works `best' in approximating the spatial exclusion model when also uptake of compounds is occurring, is a question for further investigation (started in e.g. \citet{Yang-Peng-Hille:2024}).

Any sufficiently regular function on the circle can be approximated by a superposition of periodic functions by Fourier analysis. Let $\theta\in[0,2\pi)$ denote the angular coordinate on the circular boundary $\partial\Omega_C$ and 
$\boldsymbol{x}_\theta$ the corresponding point, i.e. $\boldsymbol{x}_\theta := \boldsymbol{x}^{(0)} + R(\cos\theta,\sin\theta)\in\partial\Omega_C$.
Hence, in view of Fourier decomposition, we investigate here first fluxes of the 
form 
\begin{equation}
   \label{Eq_phi}
   \phi_n(\boldsymbol{x}_\theta) = \phi_n(\theta) = \phi_0+A\sin(n\theta) = \phi_0(1+\rho\sin(n\theta)),\qquad \theta\in[0,2\pi),
\end{equation}
where $n$ is a positive integer, constants $\phi_0, A>0$ and their ratio $\displaystyle\rho:=\frac{A}{\phi_0}\leqslant 1$ in order to maintain positivity of flux density.

If $n$ is very large, the spatial fluctuation of the flux density are on a very small spatial scale. Then, we expect that diffusion will `smooth out' this variation and the prescribed flux density may be considered homogeneous, effectively. %This effect can already be seen in the case $n=3$. 
Therefore, we are currently interested in small $n$. The question remains to determine from which Fourier mode on one can start neglecting their contribution to $\phi$, depending on $D$ and the spatial size $R$ of the cell.

Furthermore, the ratio $\rho$ plays a significant role: if $\rho$ is very small, then one can still consider using a single Dirac point in the cell centre, since the size of fluctuations is small; however, when $\rho$ is close to $1$, the inhomogeneity is non-negligible. Putting one Dirac point at the cell centre may then not be enough to obtain a good approximation of the spatial exclusion model by the point source model.

{\it\begin{quote}
In order to approximate a prescribed spatially inhomogeneous flux distribution of the form in Equation~\eqref{Eq_phi}, we propose to locate multiple Dirac points in specific configuration within a cell of the spatial exclusion model. We propose intensities for these Dirac points. Under certain conditions, we show that the emergent boundary flux in the point source model as a result of this positioning of the Dirac points and these intensities converges to the predefined flux density of the spatial exclusion model.
\end{quote}}

The idea to obtain approximately an inhomogeneous flux distribution $\phi_n$ on $\partial\Omega_C$ in the point source model is by putting one point source at the centre $\boldsymbol{x}_0$, which realises a spatialy homogenous flux distribution on the boundary. The spatial variation is then superimposed by adding off-centre Dirac points. For the sake of keeping the model simple and subsequent numerical simulations efficient, we want to have as small a number of point sources as possible, while retaining a good approximation of the spatial exclusion model with inhomogeneous flux $\phi_n$ over $\partial\Omega_C$.

Suppose that the additional $N$ off-centre Dirac point sources are located at $\boldsymbol{x}^{(i)}\in\Omega_C$, $i=1,\dots, N$. Let $\Phi_i(t)$ be the intensity of the point source (or sink, if $\Phi_i(t)<0$) at $\boldsymbol{x}^{(i)}$. Notice that in comparison to \citet{Peng2023} we not only allow for off-centre Dirac points, but also for time-dependence in the intensities. We assume the $\Phi_i(t)$ to be locally integrable. If $\Omega=\mathbb{R}^2$ (or $\partial\Omega$ is remote from the cell and $t$ is small such that reflection of mass from the boundary $\partial\Omega$ can still be neglected) the diffusion system defined in~\eqref{Eq_BVP_dirac} with $\bar{u}_0\equiv 0$, yields a flux density over the cell boundary -- for zero initial condition -- that is given for any point $\boldsymbol{x}\in\partial\Omega_C$ and $t>0$ by
\begin{align}
     \phi_P(\boldsymbol{x},t)\ &:=\ D\nabla u_P(\boldsymbol{x},t)\cdot\boldsymbol{n} \nonumber\\ &=\ \sum_{i = 0}^N \int_0^t \frac{\Phi_i(s)}{4\pi D(t-s)}\exp\left\{-\frac{\|\boldsymbol{x} - \boldsymbol{x}^{(i)}\|^2}{4D(t-s)}\right\}\frac{(\boldsymbol{x} - \boldsymbol{x}_C)\cdot (\boldsymbol{x} - \boldsymbol{x}^{(i)})}{2R(t-s)} ds.\label{Eq_bnd_flux_general_s}
\end{align}
Note that $\boldsymbol{n}\ =\ - \frac{1}{R} (\boldsymbol{x}-\boldsymbol{x}_C)$, with the conventions mentioned above.

According to Proposition~\ref{Prop_Condition}, the $N$ intensity functions $\Phi_i(t)$ should be chosen such that the resulting $\phi_P(\boldsymbol{x},t)$ is close to the desired $\phi_n(\boldsymbol{x},t)$.
Perfect agreement cannot hold
at all $\boldsymbol{x}\in\partial\Omega_C$ at all time. Not even if $A=0$, see \citet{Peng2023}. Thus, an approximation of some sort is required. From the practical point of view of numerical simulations, we want to have {\it explicit expressions} for $\Phi_i(t)$ that can be computed well while yielding a good approximation of the corresponding solution of the point source model to that of the spatial exclusion model. 

The convolution integrals in Equation~\eqref{Eq_bnd_flux_general_s} cause concern for the analysis. Typically, one would handle such a system with Fourier or Laplace transform in the time domain. Straightforward application of Fourier transform is obstructed by the constant-in-time flux distribution $\phi_n(\boldsymbol{x})$ not being in $L^2(\R_+)$. Hence, a distributional approach must be taken. Laplace transform can be directly applied to this locally integrable function, though. However, both do not yield (much desired) easily computable explicit expressions. 

To that end, we take an {\it ad hoc} approach.  For any finite $t>0$ (large), one can write -- in various ways:
\begin{equation}\label{eq:Phi deviation mean}
    \Phi_i(s)\ =\ \widetilde{\Phi}_i(t) \ +\ \Delta_i(s;t),\qquad s\in[0,t].
\end{equation}
That is, one can view $\Phi_i(s)$ on $[0,t]$ as a deviation $\Delta_i(s;t)$ from a `mean' $\widetilde{\Phi}_i(t)$. In fact, one could define $\widetilde{\Phi}_i(t) := \frac{1}{t}\int_0^t \Phi_i(s)ds$, but this particular choice is not required. What is important is, that we assume that the splitting can be made such that replacing $\Phi_i(s)$ in Equation~\eqref{Eq_bnd_flux_general_s} by $\Delta_i(s;t)$ results in values for the convolution integral that are small compared to $\hat{\phi}(\boldsymbol{x},t)$, e.g. for $t$ sufficiently large, where $\hat{\phi}(\boldsymbol{x},t)$ is obtained by replacing $\Phi_i(s)$ by $\widetilde{\Phi}_i(t)$ in Equation~\eqref{Eq_bnd_flux_general_s} and performing the integration: 
\begin{equation}\label{eq:expresion tilde Phi}
    \hat{\phi}(\boldsymbol{x},t) := \sum_{i=0}^N \frac{\widetilde{\Phi}_i(t)}{2\pi R}  \frac{(\boldsymbol{x} - \boldsymbol{x}_C)\cdot(\boldsymbol{x}-\boldsymbol{x}^{(i)})}{\|\boldsymbol{x} - \boldsymbol{x}^{(i)}\|^2}   \exp\left\{-\frac{\|\boldsymbol{x} - \boldsymbol{x}^{(i)}\|^2}{4Dt}\right\}.
\end{equation}
We then expect that $\widetilde{\Phi}_i(t)$ captures the `trend' of the behaviour of $\Phi_i(t)$ and -- moreover -- that $\hat{\phi}(\boldsymbol{x},t)$ is an adequate approximation of 
$\phi_P(\boldsymbol{x},t)$ for $t$ sufficiently large. This heuristic argument will be investigated with more mathematical rigour in follow-up work. Here, we employ it to obtain explicit, computable expressions for $\widetilde{\Phi}_i(t)$ by transferring conditions on $\phi_P$ to $\hat{\phi}$, given by Equation~\eqref{eq:expresion tilde Phi}. We then investigate the quality of approximation when using these functions as intensities. We write $\widetilde{\phi}_P(\cdot,t)$ for the flux density over the curve $\partial\Omega_C$ for the solution to the point source model {\it on the bounded domain} $\Omega$ when we take for the intensities $\Phi_i(s)$ in Equation~\eqref{Eq_bnd_flux_general_s} the trends $\widetilde{\Phi}_i(s)$. We compare $\phi(\boldsymbol{x},t)$ to $\widetilde{\phi}_P(\boldsymbol{x},t)$ -- and to $\hat{\phi}(\boldsymbol{x},t)$ in order to check for which $t$ the approximations are appropriate. Note that $\hat{\phi}(\boldsymbol{x},t)$ was computed from the solution on the {\it unbounded} space $\R^2$, using the explicit Green's functions.

Conditions imposed on $\phi_P(\cdot,t)$ vary according to which `metric of comparison' between $\phi_P(\cdot,t)$ and $\phi_n$ is chosen, like the $L^p$-distance over $\partial\Omega_C$, with $p=1,2$ or $\infty$ preferred. These may be transferred to $\widetilde{\phi}_P$. For obtaining explicit expressions, we prefer however requiring that the approximate flux density $\hat{\phi}(\boldsymbol{x},t)$ has the same value at the location of the maxima and minima of $\phi_n$. That is, for $t>0$ we impose
\begin{equation}\label{eq:conditions for tilde_Phi}
    \hat{\phi}(\boldsymbol{x}_{\theta_k},t) = \phi_n(\boldsymbol{x}_{\theta_k}) = \begin{cases} \phi_0 + A, & \mbox{if $k$ is odd},\\
    \phi_0 - A, & \mbox{if $k$ is even},
    \end{cases}
\end{equation}
for $k=1,2,\dots, 2n$. Thus one obtains $2n$ equations for $N+1$ unknown $\widetilde{\Phi}_i(t)$. Thus, with general locations $\boldsymbol{x}_i$ for the point sources in $\Omega_C$ one needs $N=2n-1$ points to be able to solve System~\eqref{eq:conditions for tilde_Phi} for the functions $\widetilde{\Phi}_i(t)$. However, exploiting symmetry by specifically localizing the points allows for doing with fewer points.

The variation in $\phi_n(\boldsymbol{x}_\theta)$ around $\phi_0$, i.e. $A\sin(n\theta)$, takes extreme values at $\theta_k:=(k-\frac{1}{2})\frac{\pi}{n}$, $k=1,2,\dots,2n$ with a maximum at odd $k$ and a minimum at even $k$. We make the choice of locating the off-centre Dirac points each on one of the $n$ line segments connecting the centre $\boldsymbol{x}_0$ to the location $\boldsymbol{x}_{\theta_{k}}$ of the maximum of $\phi_n(\boldsymbol{x})$, for $k$ odd, all at the same distance $r>0$ from the centre; see Figure~\ref{Fig_general_Dirac} for a schematic presentation (for $n=1,2$). By symmetry, the $n$ off-centre Dirac points are expected to have the same intensity, say $\Phi_D(t)$. Denote the intensity of the centre point by $\Phi_C(t)$. Let $\widetilde{\Phi}_D(t)$ and $\widetilde{\Phi}_C(t)$ be respective reference (`mean') values on $[0,t]$ for each, as discussed above. 

Further in this section, we analyse the case of $n=1,2$ and use Equation~\eqref{eq:conditions for tilde_Phi} to determine reference intensities $\widetilde{\Phi}_D(t)$ and $\widetilde{\Phi}_C(t)$. For the general case and more detailed analytical investigation, we preparing a theoretically oriented manuscript.

\subsection{Polarized Flux Distribution: $n=1$}\label{Subsec_1_Dirac}
\noindent
We start with the case $n = 1$, that has a flux distribution that is maximal at $\boldsymbol{x}_\theta$ with $\displaystyle\theta=\frac{\pi}{2}$, and minimal at the opposite point; see Figure~\ref{Fig_general_Dirac}\subref{Fig_1_Dirac}. It represents a `{\it polarized flux distribution}'. In this case, the prescribed flux density in the spatial exclusion model is expressed as 
\begin{equation}
    \label{Eq_flux_den_1}
    \phi(\boldsymbol{x}_\theta,t) = \phi_1(\boldsymbol{x}_\theta) = \phi_0(1 + \rho\sin(\theta)) = \phi_0\bigl(1 + \rho\,\frac{y-y_C}{R}\bigr),\qquad \rho=\frac{A}{\phi_0},
\end{equation}
where $y$ is the $y-$coordinate of any point over the cell boundary, $y_C$ is the $y-$coordinate of the cell center and $R$ is the cell radius. Thus, the cell is divided into two semicircles, of which the upper boundary releases more compounds while the lower boundary releases less. Following the idea in Section~\ref{Sec_Multi}, one off-centre Dirac point is located in the upper semicircle with distance $r$ to the cell centre, see Figure~\ref{Fig_general_Dirac}\subref{Fig_1_Dirac}. The coordinate of the extra off-centre Dirac point is 
\[
    \boldsymbol{x}^{(1)} = (x_C, y_C+r).
\]
\begin{figure}
    \centering
    \subfigure[Case $n=1$]{
        \includegraphics[width = 0.4\textwidth]{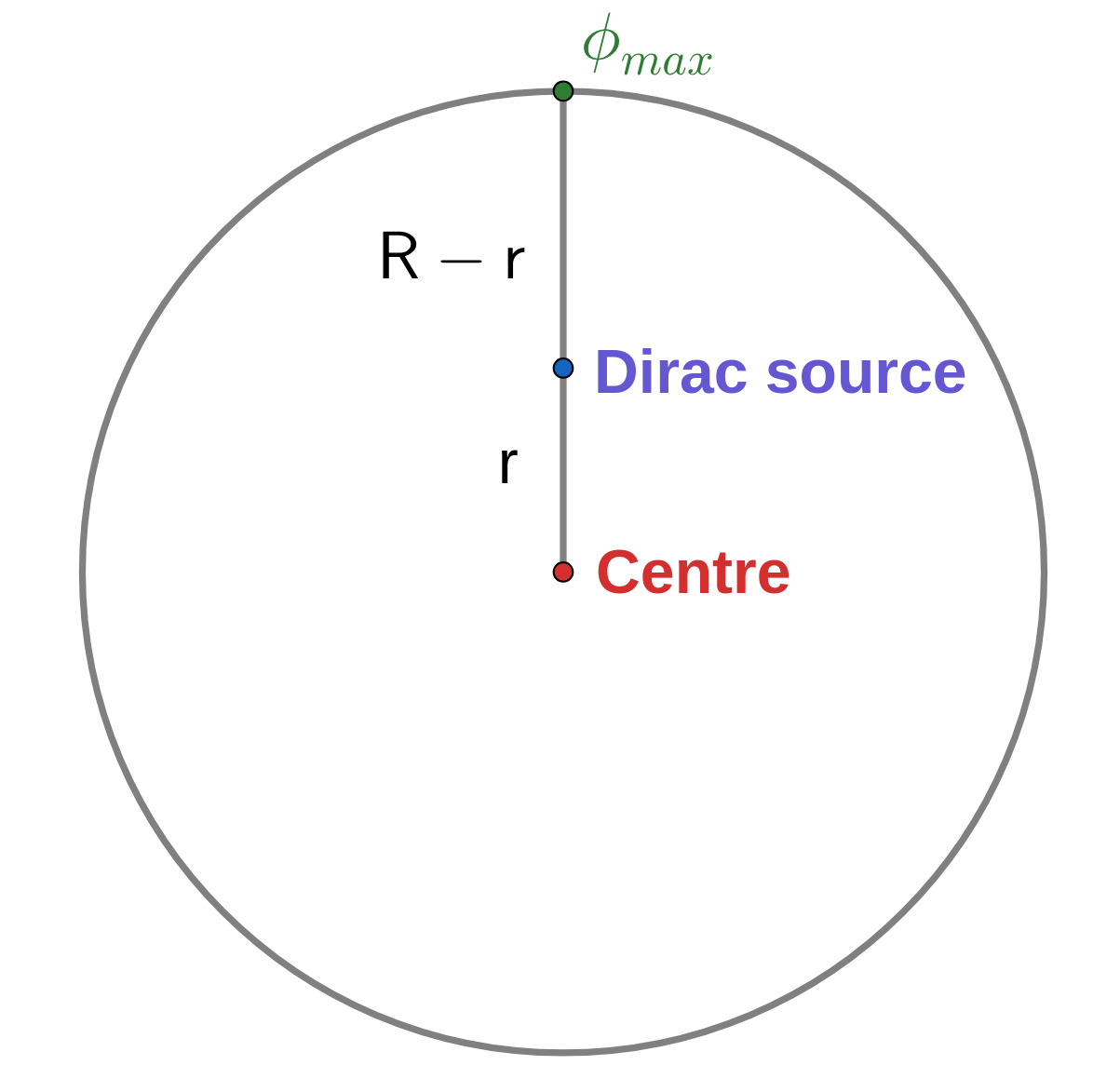}
        \label{Fig_1_Dirac}
    }\hfill
    \subfigure[Case $n=2$]{
        \includegraphics[width = 0.4\textwidth]{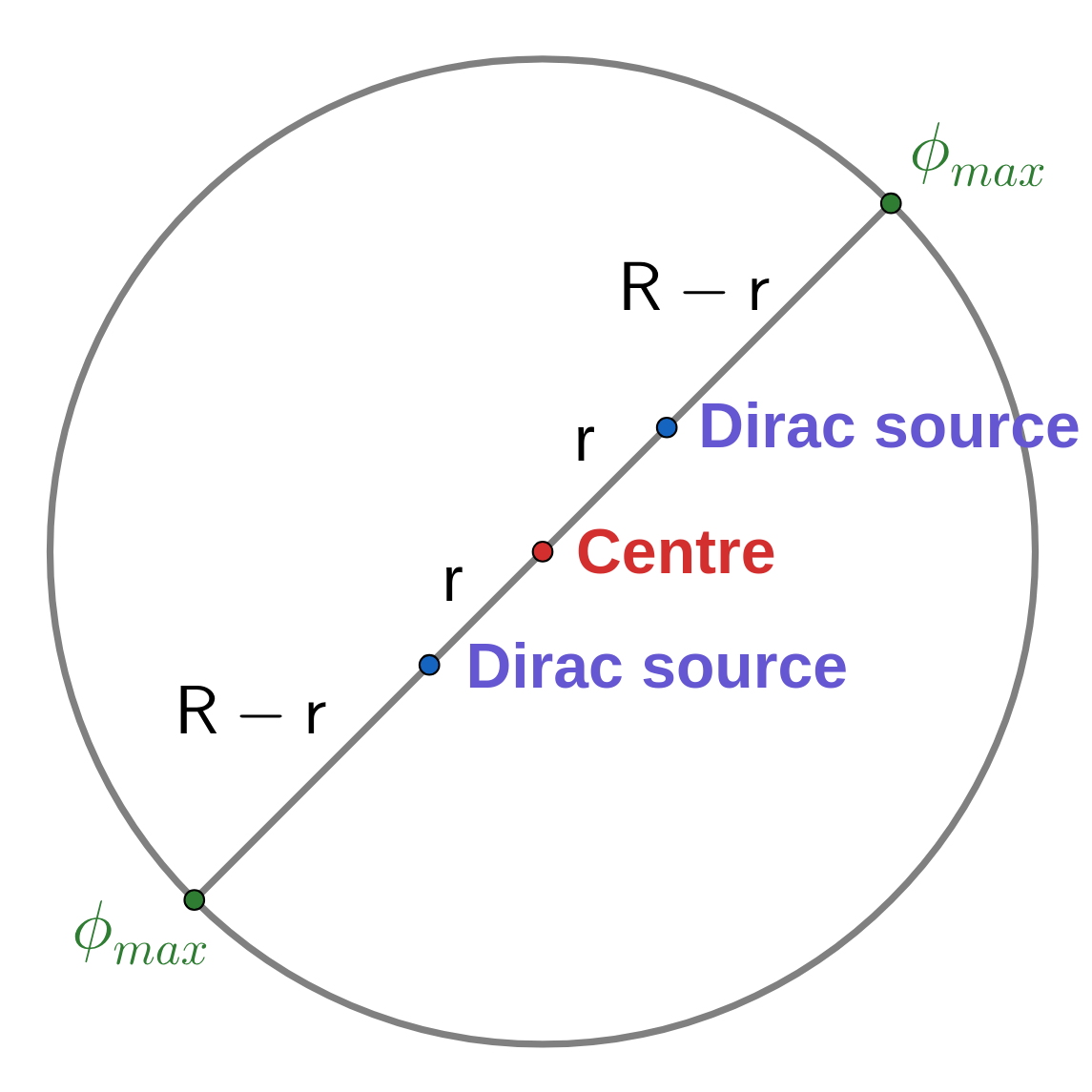}
        \label{Fig_2_Diracs}
    }
    \caption{The locations of the Dirac points for approximating the inhomogenoues flux density $\phi_n(\boldsymbol{x}_\theta)=\phi_0 + A\sin(n\theta)$ over the circular cell boundary of radius $R$ are shown for the cases (a) of a polarized flux distribution($n=1$), and (b) of an axially oriented flux distribution ($n=2$). The off-centre Dirac points (blue) are each on a line segment connecting the cell's centre point (red) to a point on the cell boundary where $\phi_n(\boldsymbol{x})$ attains its maximum value $\phi_{max}$ (green). The distance between the cell centre and any off-centre Dirac point(s) is $r$ ($0<r<R$).}
    \label{Fig_general_Dirac}
\end{figure}
Then, the boundary flux $\hat{\phi}(\boldsymbol{x},t)$ defined in Equation~\eqref{eq:expresion tilde Phi}, becomes 
\begin{align}
    \hat{\phi}(\boldsymbol{x}_\theta,t)\ &=\ \frac{\widetilde{\Phi}_D(t)(R-r\sin(\theta))}{2\pi(R^2+r^2-2rR\sin(\theta))}\exp\left\{-\frac{R^2+r^2-2rR\sin(\theta))}{4Dt}\right\} \nonumber \\
    &\qquad +\ \frac{\widetilde{\Phi}_C(t)}{2\pi R}\exp\left\{-\frac{R^2}{4Dt}\right\}.\label{Eq_flux_1}
\end{align}

To determine the value of $\widetilde{\Phi}_D(t)$ and $\widetilde{\Phi}_C(t)$, we require that for any $t>0$, the minimum and maximum of $\hat{\phi}(\boldsymbol{x}_\theta,t)$ in Equation~\eqref{Eq_flux_1} should be the same as those of $\phi_1(\boldsymbol{x}_\theta)$. In Appendix~\ref{app:same location extrema} it is shown that that for any $t>0$, $\theta\mapsto\hat{\phi}(\boldsymbol{x}_\theta,t)$ attains its maximum at $\displaystyle\theta=\frac{\pi}{2}$ and its minimum at $\displaystyle\theta=\frac{3\pi}{2}$. Hence, we obtain the conditions (for all $t>0$) 
\begin{equation}
    \hat{\phi}\left(\mbox{$\frac{\pi}{2}$},t\right) = \phi_1\left(\mbox{$\frac{\pi}{2}$}\right) = \phi_0+A \qquad\mbox{and}\qquad \hat{\phi}\left(\mbox{$\frac{3\pi}{2}$},t\right) = \phi_1\left(\mbox{$\frac{3\pi}{2}$}\right) = \phi_0-A.
\end{equation}
Then, $\widetilde{\Phi}_D(t)$ and $\widetilde{\Phi}_C(t)$ must satisfy the system
\begin{equation}
\label{Eq_dipole}
    \left\{
    \begin{aligned}
        &\frac{\widetilde{\Phi}_D(t)}{2\pi (R-r)}\exp\left\{-\frac{(R-r)^2}{4Dt}\right\}+\frac{\widetilde{\Phi}_C(t)}{2\pi R}\exp\left\{-\frac{R^2}{4Dt}\right\} = \phi_0+A,\\
        &\frac{\widetilde{\Phi}_D(t)}{2\pi (R+r)}\exp\left\{-\frac{(R+r)^2}{4Dt}\right\}+\frac{\widetilde{\Phi}_C(t)}{2\pi R}\exp\left\{-\frac{R^2}{4Dt}\right\} = \phi_0-A.
    \end{aligned}
    \right.
\end{equation}
The exact solutions to System (\ref{Eq_dipole}) are 
\begin{equation}
    \label{Eq_dipole_sol}
    \left\{
    \begin{aligned}
        \widetilde{\Phi}_D(t) &= \frac{4\pi A}{\frac{1}{R-r}\exp\left\{-\frac{(R-r)^2}{4Dt}\right\}-\frac{1}{R+r}\exp\left\{-\frac{(R+r)^2}{4Dt}\right\}},\\
        \widetilde{\Phi}_C(t) &= 2\pi R\exp\left\{\frac{R^2}{4Dt}\right\}\left(\phi_0+A-\frac{2A(R+r)}{(R+r)-(R-r)\exp\left\{-\frac{Rr}{Dt}\right\}}\right). 
    \end{aligned}
    \right.
\end{equation}
From the expressions, we notice that: (1)  solutions exist for $0<r< R$; (2) if $r = R$, then System~\eqref{Eq_dipole} is not defined, in the sense that not all coefficients are finite. Nevertheless, $\tilde{\Phi}_D(t)\to 0$ as $r\uparrow R$ in Expression~\eqref{Eq_dipole_sol}, but this does not have an apparent meaning; (3) neither of the functions are integrable at $t=0$. So, the functions $\widetilde{\Phi}_C(t)$ and $\widetilde{\Phi}_D(t)$ are not locally integrable on $\R_+$, as was assumed of the intensities $\Phi_i(t)$ at the start; (4) $\widetilde{\Phi}_D(t)$ is always non-negative, while $\widetilde{\Phi}_C(t)$ can be negative. In particular, if $A = \phi_0$ (i.e. $\rho = 1$), then $\widetilde{\Phi}_C(t)$ is negative. Thus, the off-centre Dirac point represents a source, always, while the centre point can be a sink or a source, which depends on the ratio between $\phi_0$ and $A$. In Appendix~\ref{App_Phi_D_Phi_C}, we provide a brief discussion of key characteristics of $\widetilde{\Phi}_D$ and $\widetilde{\Phi}_C$.

Note that the functions $\widetilde{\Phi}_D(t)$ and $\widetilde{\Phi}_C(t)$ converge very rapidly to $\infty$ as $t\downarrow 0$. In particular, they are not locally integrable near $0$, contrary to what is assumed of intensities. In computations, these functions are never evaluated at $0$, or closer to 0 than (halve a) time step $\tau$ in the numerical solver. Thus, effectively, they may be considered as given by Expression~\eqref{Eq_dipole_sol} and `truncated' near 0 by the value at $\tau$. Thus, one obtains locally integrable intensities.

Substituting Expression~\eqref{Eq_dipole_sol} into Equation~\eqref{Eq_flux_1}, we obtain
\begin{equation}
    \label{Eq_1_Dirac_bnd_flux_t}
    \begin{aligned}
        \hat{\phi}_1(\theta,t) &= \phi_0 + A + C_1(t)\exp\left\{-\frac{R^2+r^2-2Rr\sin(\theta)}{4Dt}\right\}\frac{R-r\sin(\theta)}{R^2+r^2-2Rr\sin(\theta)}\\
        &\qquad-\frac{2A(R+r)}{(R+r)-(R-r)\exp\left\{-\frac{Rr}{Dt}\right\}},
    \end{aligned}
\end{equation}
where 
\begin{equation}\label{eq: C1}
    C_1(t) = \frac{2A}{\frac{1}{R-r}\exp\left\{-\frac{(R-r)^2}{4Dt}\right\}-\frac{1}{R+r}\exp\left\{-\frac{(R+r)^2}{4Dt}\right\}}> 0, \quad \mbox{for all}\  r\in(0, R).
\end{equation}

In Figure~\ref{Fig_sine_1_Dirac}(a), we compare the prescribed flux density $\phi(\boldsymbol{x}_\theta) = \phi_1(\boldsymbol{x}_\theta)$ with $\hat{\phi}(\theta,t)=\hat{\phi}_1(\theta,t)$ for various time points. Note that the location and value of the maximum and minimum of $\phi(\boldsymbol{x}_\theta)$ and $\hat{\phi}_1(\theta, t)$ coincide, as expected (see Appendix~\ref{App_proof_1_Dirac_extremum}). Since the minimum of $\hat{\phi}_1(\theta, t)$ equals $\phi_0-A$, which is always non-negative, the positive concentration in the computational domain is ensured. Moreover, with a fixed value of $r$, increasing $t$ reduces the difference between $\hat{\phi}(\theta, t)$ and $\phi$. 

The distance $r$ of the off-centre Dirac point to the centre is a parameter we still can choose in the range $(0,R)$. To investigate its effect on $\hat{\phi}_1(\theta,t)$ we consider now the dependence on $r$ of the limit of this function as `$t\to\infty$'. We have for $0<r<R$:
\begin{equation}
    \label{Eq_flux_stst_1}
    \hat{\phi}_{\infty}(\theta;r)\ :=\  \lim_{t\to\infty} \hat{\phi}_1(\theta,t) \ =\ \phi_0+A-\frac{A(R+r)^2(1-\sin(\theta))}{R^2-2Rr\sin(\theta)+r^2}.
\end{equation}
Figure~\ref{Fig_sine_1_Dirac}(b) plots $\hat{\phi}_\infty$ for multiple values of $r$. Observe that the dashed curve ($\hat{\phi}_\infty$) and the solid curve ($\phi=\phi_1$) overlap even more when $r$ is decreasing. The analytical proof of this observed convergence is given in Appendix~\ref{App_proof_Dirac}. Furthermore, when $r$ decreases, $\widetilde{\Phi}_D$ increases while $\widetilde{\Phi}_C$ decreases without any upper or lower bound. In other words, with $r\rightarrow0^+$, $\widetilde{\Phi}_D(t)\rightarrow+\infty$ and $\widetilde{\Phi}_C(t)\rightarrow-\infty$ for any $t>0$. Thus, the off-centre Dirac point is a source while the centre point is a sink. The pair of Dirac delta distributions then form a system that resembles a dipole system in electromagnetism \citep{Ellingson_2018}. 
\begin{figure}[h!]
    \centering
    \subfigure[Flux over the cell boundary with $r=0.05$ at different $t$]{
    \includegraphics[width = 0.9\textwidth]{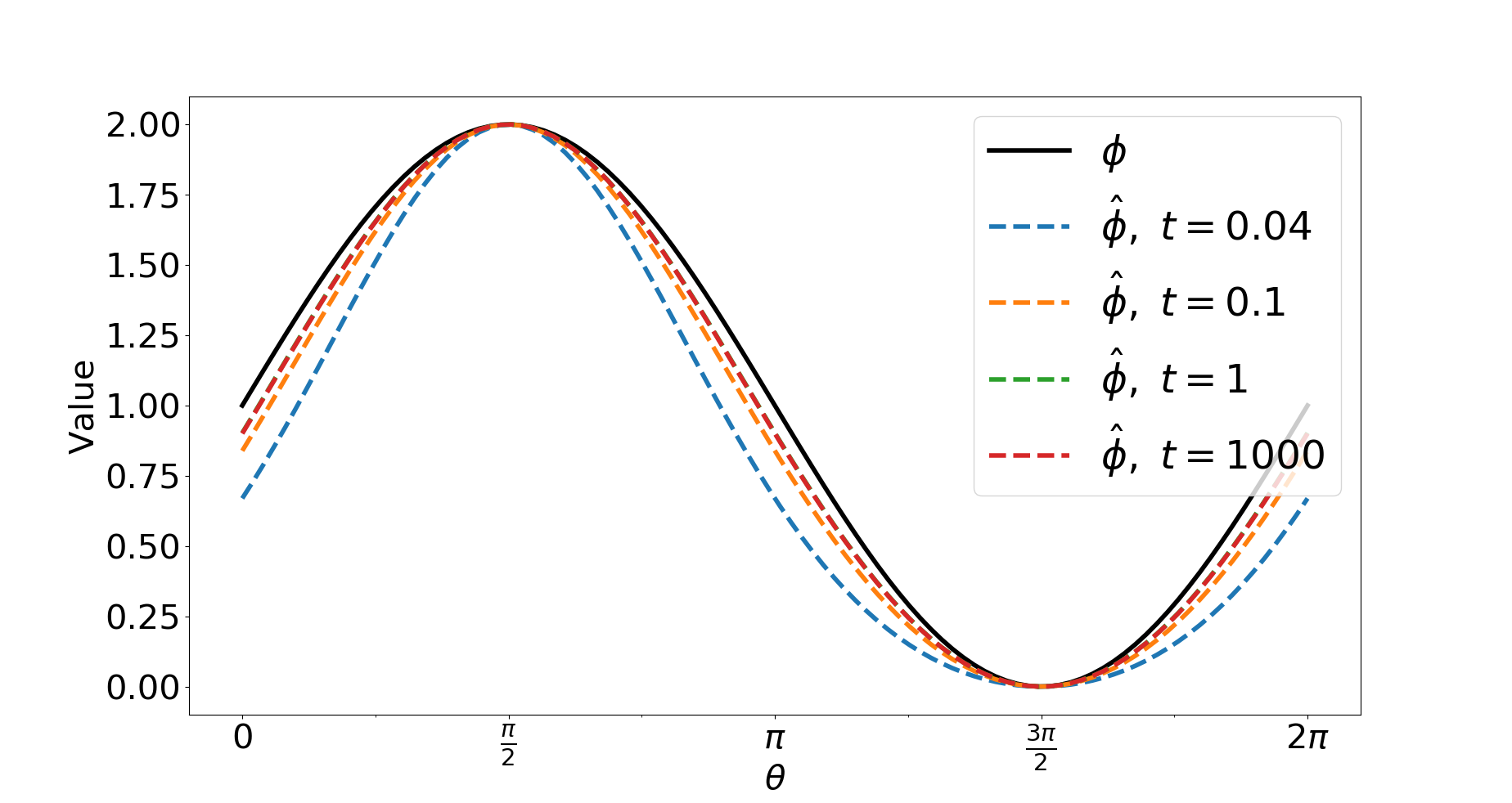}}
    \subfigure[Flux over the cell boundary with various values of $r$ when $t\rightarrow +\infty$]{
    \includegraphics[width = 0.9\textwidth]{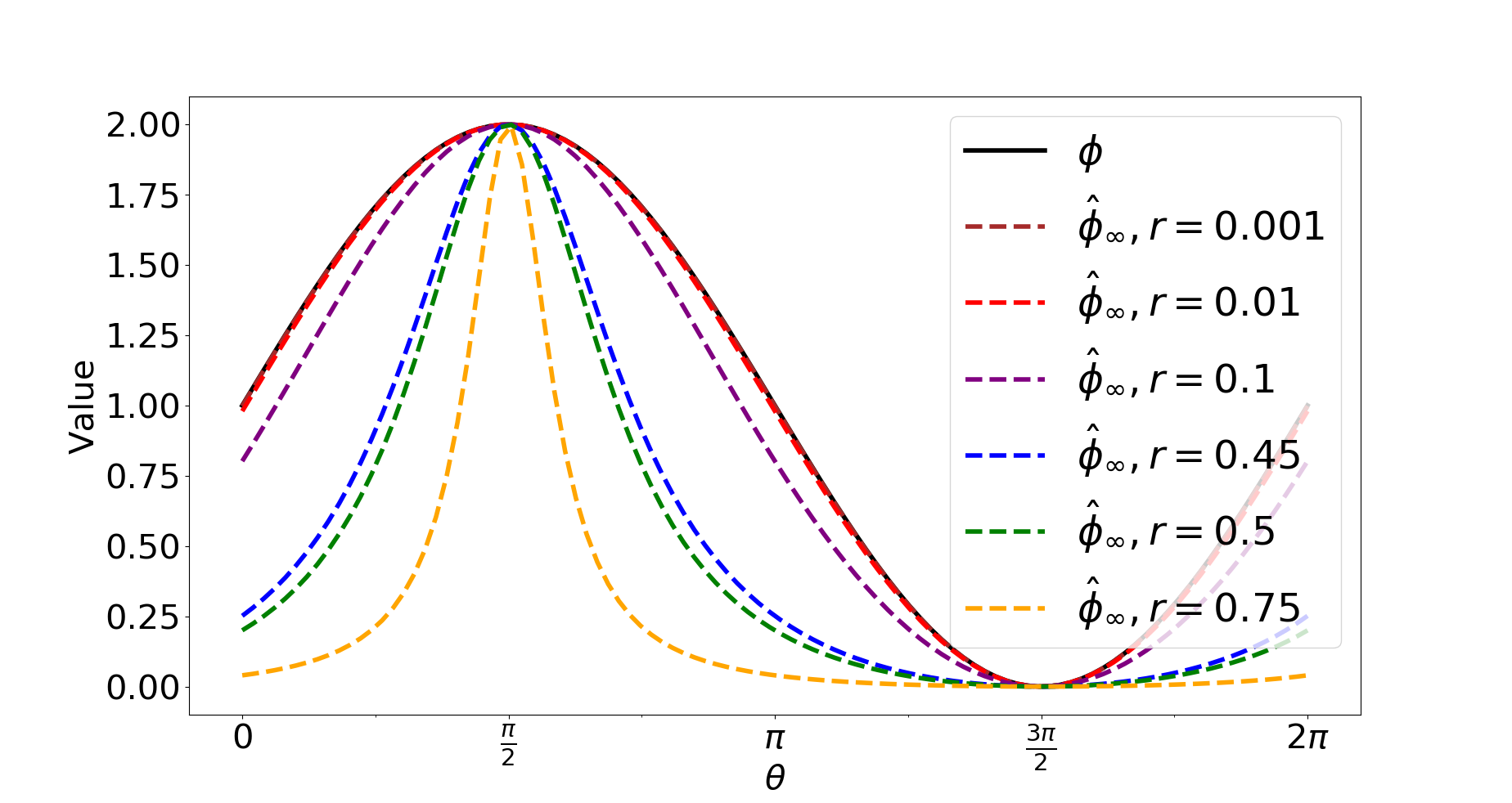}}
    \caption{For $n=1$, $\phi_0 = A = 1.0$ and cell radius $R=1.0$ the desired flux density $\phi(\boldsymbol{x}_\theta) = 1+\sin(\theta)$ is compared as function of the angle $\theta$ with (a) $\widetilde{\phi}(\boldsymbol{x}_\theta,t)$, computed from Equation~\eqref{eq:expresion tilde Phi} for indicated times, and (b) the limit of $\widetilde{\phi}(\boldsymbol{x}_\theta,t)$ as $t\to\infty$, given by Equation~\eqref{Eq_flux_stst_1} for various values of $r$. The solid curve is the prescribed flux density $\phi_1$ in the spatial exclusion approach and the dashed curves show the approximants $\widetilde{\phi}(\cdot,t)$ or $\widetilde{\phi}_\infty$; different colours of the curves represent different values of time $t$ or the distance between the off-centre Dirac point and the cell centre $r$.} 
    \label{Fig_sine_1_Dirac}
\end{figure}

\subsection{Axially Oriented Flux Distribution: $n=2$}
\noindent
When $n=2$, there are two complete periods of the flux density over the cell boundary. Hence, $\theta\mapsto\phi_2(\boldsymbol{x}_\theta)$ has two maxima, at $\displaystyle\theta = \frac{\pi}{4}$ and $\displaystyle\theta = \frac{5\pi}{4}$. Thus, secretion of compound is high around the axis through the two associated points on the boundary and low orthogonal to this line. Hence we call this the case of an `{\it axially oriented flux distribution}'. These two values will also be the angles of the locations of the extra off-centre Dirac points; see Figure~\ref{Fig_general_Dirac}\subref{Fig_2_Diracs} for a schematic presentation. Accordingly, $\boldsymbol{x}^{(0)}=\boldsymbol{x}_C$ will be the centre point, while $\boldsymbol{x}^{(1)}$ corresponds to $\displaystyle\theta=\frac{\pi}{4}$ and $\displaystyle\boldsymbol{x}^{(2)}$ to $\displaystyle\theta=\frac{5\pi}{4}$. The minima are located at $\displaystyle\theta=\frac{3\pi}{4}$ and $\displaystyle\theta=\frac{7\pi}{4}$.

With the chosen location of the Dirac points, a direct computation yields
\begin{equation}
    \frac{(\boldsymbol{x}_\theta - \boldsymbol{x}_C)\cdot (\boldsymbol{x}_\theta - \boldsymbol{x}^{(i)})}{\| \boldsymbol{x}_\theta - \boldsymbol{x}^{(i)} \|^2}\ =\ \left\{ \begin{aligned}
        1,\qquad\qquad\qquad\quad & \quad\mbox{if}\ i=0,\\
        \frac{R^2 - \frac{1}{2}\sqrt{2} rR(\cos\theta+\sin\theta)}{R^2 + r^2 - \sqrt{2} rR(\cos\theta+\sin\theta)}, & \quad\mbox{if}\ i=1,\\
        \frac{R^2 + \frac{1}{2}\sqrt{2} rR(\cos\theta+\sin\theta)}{R^2 + r^2 + \sqrt{2} rR(\cos\theta+\sin\theta)}, & \quad\mbox{if}\ i=2.\\
    \end{aligned} \right.
\end{equation}
The flux density approximation then becomes 

\begin{equation}
\label{Eq_phi_hat_2}
\begin{aligned}
    \hat{\phi}_2(\theta, t)&= \frac{\widetilde{\Phi}_D(t)}{2\pi R}\frac{R^2-\frac{\sqrt{2}}{2}Rr(\sin(\theta)+\cos(\theta))}{R^2+r^2-\sqrt{2}Rr(\sin(\theta)+\cos(\theta))}\exp\left\{-\frac{R^2+r^2-\sqrt{2}Rr(\sin(\theta)+\cos(\theta))}{4Dt}\right\}\\
    &+\frac{\widetilde{\Phi}_D(t)}{2\pi R}\frac{R^2+\frac{\sqrt{2}}{2}Rr(\sin(\theta)+\cos(\theta))}{R^2+r^2+\sqrt{2}Rr(\sin(\theta)+\cos(\theta))}\exp\left\{-\frac{R^2+r^2+\sqrt{2}Rr(\sin(\theta)+\cos(\theta))}{4Dt}\right\}\\
    &+\frac{\widetilde{\Phi}_C(t)}{2\pi R}\exp\left\{-\frac{R^2}{4Dt}\right\}
\end{aligned}
\end{equation}
We impose in this case too the condition that the maxima and minima of the approximate flux density $\hat{\phi}_2(\boldsymbol{x}_\theta,t)$ should have the same location and value as those of $\phi(\boldsymbol{x}_\theta)$ at all times $t>0$. According to Appendix~\ref{app:same location extrema}, the two functions in the given configuration of Dirac points and intensities do have the same fixed location in time of extremes. Hence, we solve $\widetilde{\Phi}_D(t)$ and $\widetilde{\Phi}_C(t)$ from
\begin{equation}
    \label{Eq_2_Dirac_Phi_t}
    \left\{
    \begin{aligned}
    &\frac{\widetilde{\Phi}_D(t)}{2\pi(R-r)}\exp\left\{-\frac{(R-r)^2}{4Dt}\right\}+\frac{\widetilde{\Phi}_D(t)}{2\pi(R+r)}\exp\left\{-\frac{(R+r)^2}{4Dt}\right\}+\frac{\widetilde{\Phi}_C(t)}{2\pi R}\exp\left\{-\frac{R^2}{4Dt}\right\} = \phi_0+A,\\
    &\frac{\widetilde{\Phi}_D(t)R}{\pi(R^2+r^2)}\exp\left\{-\frac{R^2+r^2}{4Dt}\right\}+\frac{\widetilde{\Phi}_C(t)}{2\pi R}\exp\left\{-\frac{R^2}{4Dt}\right\} = \phi_0-A.
    \end{aligned}
    \right.
\end{equation}

The solutions to System~\eqref{Eq_2_Dirac_Phi_t} are given by
\begin{equation}\label{Eq_tripole_sol}
    \widetilde{\Phi}_D(t)= \frac{4\pi A}{ \frac{1}{R-r} \exp\left\{ -\frac{(R-r)^2}{4Dt}\right\} +  \frac{1}{R+r} \exp\left\{ -\frac{(R+r)^2}{4Dt}\right\} - 2\frac{R}{R^2 +r^2} \exp\left\{ -\frac{r^2+R^2}{4Dt}\right\}  }\ =:\ 2\pi C_2(t)
\end{equation}
and
\begin{align}
    &\widetilde{\Phi}_C(t)\ =\ 2\pi R\exp\left\{\frac{R^2}{4Dt}\right\} \nonumber\\ &\quad\times
    \left[  \phi_0-A - \frac{4 A\exp\left\{-\frac{R^2+r^2}{4Dt}\right\}}{\frac{R^2+r^2}{R(R-r)}\exp\left\{-\frac{(R-r)^2}{4Dt}\right\}+\frac{R^2+r^2}{R(R+r)}\exp\left\{-\frac{(R+r)^2}{4Dt}\right\}-2\exp\left\{-\frac{R^2+r^2}{4Dt}\right\}} \right].\label{eq:phiC triple points}
\end{align}
Note that $C_2(t)>0$ for all $r\in(0,R)$.

Then, the approximate boundary flux is given by 
\begin{equation}
    \label{Eq_2_Dirac_bnd_flux_t}
    \begin{aligned}
    \hat{\phi}_2(&\theta,t)\ =\ \phi_0-A\\
    &+\ C_2(t)\left[\frac{R-\frac{\sqrt{2}}{2}r(\sin(\theta)+\cos(\theta))}{R^2+r^2-\sqrt{2}Rr(\sin(\theta)+\cos(\theta))}\exp\left\{-\frac{R^2+r^2-\sqrt{2}Rr(\sin(\theta)+\cos(\theta))}{4Dt}\right\}\right.\\
    &\quad+\ \frac{R+\frac{\sqrt{2}}{2}r(\sin(\theta)+\cos(\theta))}{R^2+r^2+\sqrt{2}Rr(\sin(\theta)+\cos(\theta))}\exp\left\{-\frac{R^2+r^2+\sqrt{2}Rr(\sin(\theta)+\cos(\theta))}{4Dt}\right\} \\
    &\left.\quad\quad-\ \frac{2R}{R^2+r^2}\exp\left\{-\frac{R^2+r^2}{4Dt}\right\}\right] 
\end{aligned}
\end{equation}

\begin{figure}[h!]
    \centering
    \subfigure[Flux over the cell boundary with $r=0.1$ at different $t$]{
    \includegraphics[width = 0.9\textwidth]{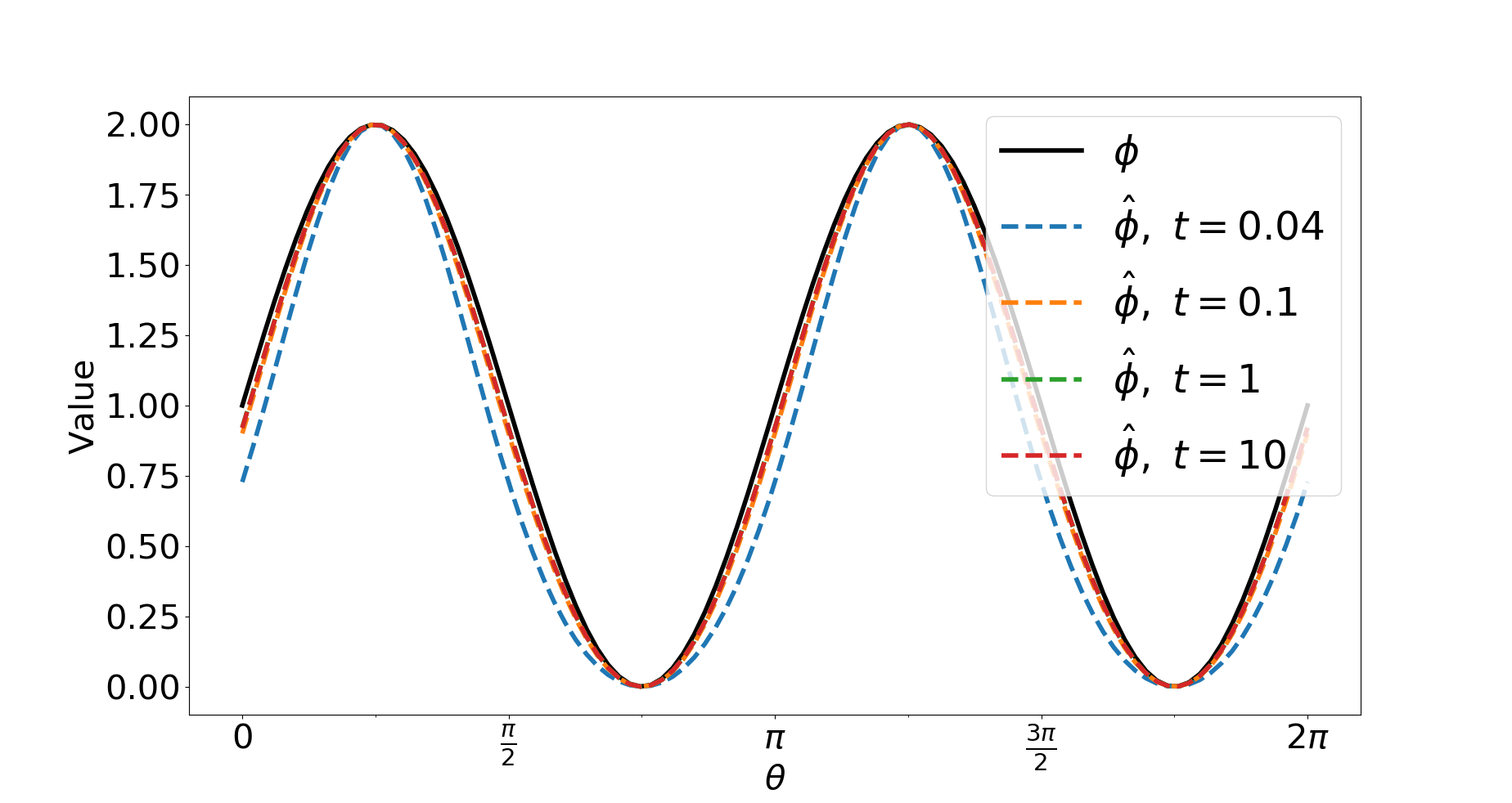}}
    \subfigure[Flux over the cell boundary with various values of $r$ when $t\rightarrow +\infty$]{
    \includegraphics[width = 0.9\textwidth]{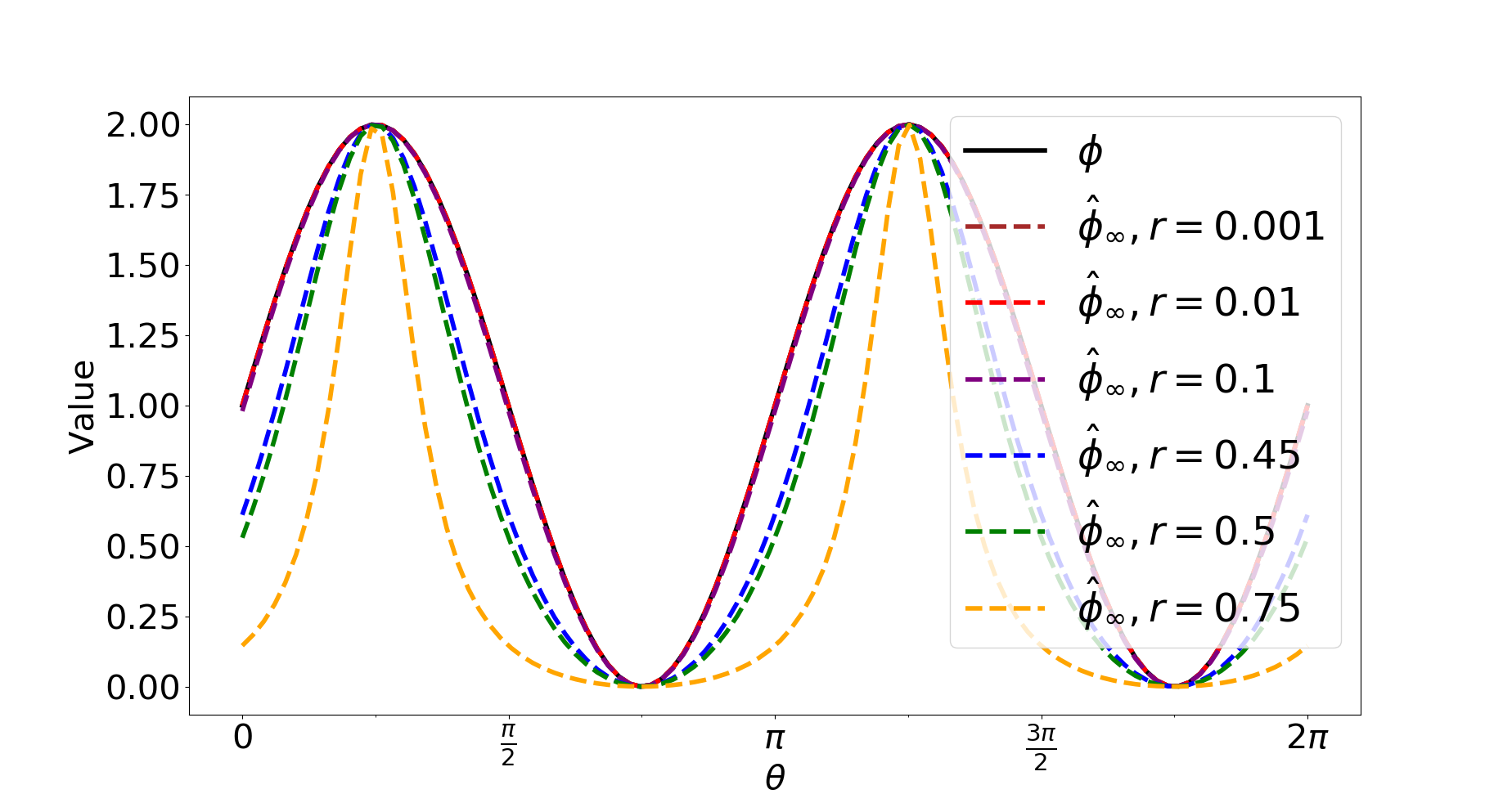}}
    \caption{Selecting $\rho = 1.0$ and cell radius $R=1.0$ for $\phi = 1+\sin(2\theta)$, the flux over the cell boundary is shown versus the angle $\theta$. The solid curve is the predefined flux density in the spatial exclusion approach and the dashed curves are the flux density computed by Equation~\eqref{Eq_2_Dirac_bnd_flux_t} with $n=2$; different colours of the curves represent different values of the distance between the off-centre Dirac point and the cell centre $r$. }
    \label{Fig_sine_2_Dirac}
\end{figure}

In Appendix~\ref{app:same location extrema} we argue that $\hat{\phi}_2$ and $\phi_2$ have the same location for the extreme points that are fixed in time. The imposed conditions assure that their values agree too. Numerically, it is confirmed that $\hat{\phi}_2$ and $\phi_2$ have the same extreme points; see Figure~\ref{Fig_sine_2_Dirac}. 

The approximate boundary flux $\hat{\phi}_2(\theta,t)$ has limit as $t\to\infty$ given by
\begin{equation}
    \label{Eq_2_Dirac_bnd_flux_stst}
    \hat{\phi}_{2,\infty}(\theta;r) = \phi_0-A-\frac{A(R^2-r^2)}{r^2}\ +\ \frac{A(R^2-r^2)(R^2+r^2)(R^2-r^2\sin(2\theta))}{r^2(R^4+r^4-2R^2r^2\sin(2\theta))}
\end{equation}
As can be seen in Figure~\ref{Fig_sine_2_Dirac}(b), as $r\rightarrow0^+$, $\hat{\phi}_{2,\infty}$ converges to $\phi_2(\theta) = 1+\sin(2\theta)$ point-wise. The analytical proof can again be found in Appendix~\ref{App_proof_Dirac}.

\section{Computational Set-up}
\label{Sec_Comp_Setup}
To visualize the deviation between the two approaches with inhomogeneous flux density, numerical simulations are needed. 
In this manuscript, finite-element methods (FEM) and backward Euler are used for the numerical simulations, for the spatial discretization and time integration respectively. Particularly, we use Python 3.10 and \texttt{FEniCS} package \citep{AlnaesEtal2015} version 2019.2.0.dev0. We bear in mind that in the implementation, instead of a smooth circle, the cell region is constructed by a series of mesh points as a polygon. 

The spatial exclusion model is well-defined in the FEM setting (see $(\rm BVP_S)$). In a FEM implementation of the point source model $(\rm BVP_P)$ the singular Delta distributions appear as point evaluation of the basis functions in the FEM at the Dirac points and multiplication with the corresponding source's intensity (see $(\rm WF_P)$). However, this singularity may lead to an unstable and unreliable numerical solution, particularly with a large intensity of a Dirac delta distribution. To counter this, we exploit the availability of the explicit Green's function on $\R^2$ and replace $(\rm BVP_P)$ by a diffusion equation on $\R^2$ with singular Dirac distributions in the reaction term, coupled to an appropriate correction on the bounded domain $\Omega$. The first now has a semi-explicit representation formula in terms of an integral, while the correction is given by the solution to a diffusion equation on $\Omega$ without reaction term and with regular data in its boundary conditions. We will call this the `{\it explicit Green's function approach}'. Details are given below. It is a common singularity removal technique \citep{Gjerde_2019, Lowry_1989, peng2019_DIAM_19_02}. 

It is our objective to quantitatively and qualitatively compare the spatial exclusion and point source model in different parameter regimes. Therefore, we use non-dimensionalized descriptions for both, such that only `essential (combinations of) parameters' appear in the equations. The explicit scaling is given in Section~\ref{subsec:non-dimensional models}.

\subsection{Explicit Green's function approach}\label{Sec_Green_approach}

In Section~\ref{Sec_Amplitude}, we showed a way to determine location and intensity of Dirac point sources, such that the resulting flux density over $\partial\Omega_C$ approximates the prescribed $\phi(\boldsymbol{x},t)$. In that approach, the centre point has positive intensity $\tilde{\Phi}_C(t)$ at time $t$ and the off-centre Dirac points have equal intensity $\tilde{\Phi}_D(t)$, which may not have a fixed sign. The system is similar to an electromagnetic dipole \citep{griffiths2005introduction}. 

Theoretically, to obtain a small discrepancy between the spatial exclusion approach and the point source approach, we need a small $r>0$, which will result in large absolute values of $\widetilde{\Phi}_C$ and $\widetilde{\Phi}_D$. Then, the numerical simulation with FEM might become unstable.

To resolve this issue, we remove the singularity of the Dirac delta distribution from the reaction term from the PDE that is solved numerically, in the following way. The explicit expression for the Green's function for the diffusion equation on $\R^2$ yields an integral representation for the solution of the equation 
\begin{equation}
    \label{Eq_unbnd_diffusion}
    \left\{
    \begin{aligned}
    \frac{\partial\hat{u}(\boldsymbol{x},t)}{\partial t}-D\Delta\hat{u}(\boldsymbol{x},t) &= \sum_{i=1}^{N} \widetilde{\Phi}_i(t) \delta(\boldsymbol{x} - \boldsymbol{x}^{(i)}), \qquad &\mbox{in}\ \R^2,\ t>0,\\
    \hat{u}(\boldsymbol{x},0) &= 0, \qquad &\mbox{in}\ \R^2,\ t=0,
    \end{aligned}
    \right.
\end{equation}
where $\boldsymbol{x}^{(i)}$ is the locations of the Dirac points, $\widetilde{\Phi}_i$ is the intensity of the Dirac point $\boldsymbol{x}_i$. In fact,
\begin{equation}
    \label{Eq_Green}
    \hat{u}(\boldsymbol{x},t) = \int_0^t\frac{1}{4\pi D(t-s)}\sum_{i=1}^{N} \widetilde{\Phi}_i(s)\exp\left\{-\frac{\|\boldsymbol{x}-\boldsymbol{x}^{(i)}\|^2}{4D(t-s)}\right\}ds, \qquad\mbox{for}\ \boldsymbol{x}\in\R^2,\ t\geq 0.
\end{equation}
Note that $\hat{u}(\boldsymbol{x},t)$ can be computed from Expression~\eqref{Eq_Green}, employing appropriate techniques -- not involving FEM -- that ensure its accuracy, even for high intensities.

Let $u_P(\boldsymbol{x},t)$ be the solution to the boundary value problem of the point source approach in Equation~\eqref{Eq_BVP_dirac} with zero initial conditions. Then we may consider $u_P$ as a correction to 
$\hat{u}$, due to reflection of `mass' back into the domain $\Omega$ at the boundary $\partial\Omega$. Define this correction by
\[
    v(\boldsymbol{x},t):=u_P(\boldsymbol{x},t)-\hat{u}(\boldsymbol{x},t).
\]
Since $u_P(\boldsymbol{x},t)=v(\boldsymbol{x},t)+\hat{u}(\boldsymbol{x},t)$, substituting this into $(\rm BVP_P)$, we conclude that $v(\boldsymbol{x},t)$ solves the boundary value problem
\begin{equation}
    \label{Eq_BVP_v}
    (\rm BVP_v)\left\{
    \begin{aligned}
    \frac{\partial v(\boldsymbol{x},t)}{\partial t} - D\Delta v(\boldsymbol{x},t) &= 0, &\mbox{in $\Omega, t>0$,}\\
	D\nabla v\cdot\boldsymbol{n} &= -D\nabla \hat{u}\cdot\boldsymbol{n}, &\mbox{on $\partial\Omega, t>0$,}\\
	v(\boldsymbol{x}, 0) &= \bar{u}_0(\boldsymbol{x}), &\mbox{in $\Omega, t=0$.}
	\end{aligned}
	\right.
\end{equation}

Hence, the solution to $(\rm BVP_P)$ can be `post-processed' after first determining $\hat{u}(\boldsymbol{x},t)$, then solving for $v(\boldsymbol{x},t)$ defined by $(\rm BVP_v)$ in which there is no singular reaction term -- with $\hat{u}$ now given and appearing in the boundary condition -- by means of e.g. FEM without a singular reaction term and then, finally, summing the two contributions. This overall step-wise approach we call the {\it explicit Green's function approach}.

The weak form of $(\rm BVP_v)$, used in the numerical simulations, is given by
\begin{equation*}
(\rm WF_v)\left\{
 \begin{aligned}
    &\text{Find $v\in H^1(\Omega)$, such that}\\
    &\int_\Omega\frac{\partial v}{\partial t}\varphi + D\nabla v\cdot \nabla\varphi d\Omega+\int_{\partial\Omega}D\varphi\nabla\hat{u}\cdot\boldsymbol{n}d\Gamma = 0, \text{ for any $\varphi\in H^1(\Omega)$.}\\
\end{aligned}  
    \right.
\end{equation*}

\subsection{Non-dimensional models}\label{subsec:non-dimensional models}

\noindent
The scalings for non-dimensionalisation are similar to those used in \citet{Peng2023}. The provided description is for the general case of any positive integer $n$ in Equation~\eqref{Eq_phi}. The rescalings are:
\begin{align*}
    \boldsymbol{\xi} := \frac{\boldsymbol{x}}{R},\ \tau := \frac{t}{\tau_0},\ \hat{D}:= \frac{D\tau_0}{R^2},\ \gamma:=\frac{u}{u^*},\ \frac{\phi_0\tau_0}{Ru^*} = 1,
\end{align*}
where the time-scale $\tau_0$ is chosen in relation to the chosen reference density $u^*$, such that the last parameter combination, the `non-dimensional flux density' becomes $1$. $R$ is the cell radius in the dimensional setting.

With this scaling the boundary value problem $(\rm BVP_S)$ becomes
\begin{equation*}
	(\rm BVP'_S)\left\{
	\begin{aligned}
	\frac{\partial \gamma_S(\boldsymbol{\xi},\tau)}{\partial \tau} - \hat{D}\Delta_{\boldsymbol{\xi}} \gamma_S(\boldsymbol{\xi},\tau) &= 0, &\mbox{in $\hat{\Omega}\setminus\bar{\hat{\Omega}}_C, \tau>0$,}\\
	\hat{D}\nabla_{\boldsymbol{\xi}} \gamma_S(\boldsymbol{\xi},\tau)\cdot\boldsymbol{n}_{\boldsymbol{\xi}} &= 1+\rho\sin(n\theta), &\mbox{on $\partial\hat{\Omega}_C, \tau>0$,}\\
	\hat{D}\nabla_{\boldsymbol{\xi}} \gamma_S(\boldsymbol{\xi},\tau)\cdot\boldsymbol{n}_{\boldsymbol{\xi}} &= 0, &\mbox{on $\partial\hat{\Omega}, \tau>0$,}\\
	\gamma_S(\boldsymbol{\xi}, 0) &= \frac{u_0(\boldsymbol{\xi})}{u^*}, &\mbox{in $\hat{\Omega}\setminus\bar{\hat{\Omega}}_C$.}
	\end{aligned}
	\right.
\end{equation*}
Similarly, we obtain the dimensionless system of $(\rm BVP_P)$, which reads as 
\begin{equation*}
	(\rm BVP'_P)\left\{
	\begin{aligned}
	\frac{\partial \gamma_P(\boldsymbol{\xi},\tau)}{\partial \tau} - \hat{D}\Delta_{\boldsymbol{\xi}} \gamma_P(\boldsymbol{\xi},\tau) &= \hat{\widetilde{\Phi}}_C\delta(\boldsymbol{\xi}-\boldsymbol{\xi}_C)+\sum_{i = 1}^{n}\hat{\widetilde{\Phi}}_D\delta(\boldsymbol{\xi}-\boldsymbol{\xi}_i), &\mbox{in $\hat{\Omega}, \tau>0$,}\\
	\hat{D}\nabla_{\boldsymbol{\xi}} \gamma_P\cdot\boldsymbol{n}_{\boldsymbol{\xi}} &= 0, &\mbox{on $\partial\hat{\Omega}, \tau>0$,}\\
	\gamma_P(\boldsymbol{\xi}, 0) &= \frac{\bar{u}_0(\boldsymbol{\xi})}{u^*}, &\mbox{in $\hat{\Omega}$.}
	\end{aligned}
	\right.
\end{equation*}
In the explicit Green's function approach, the dimensionless semi-explicit solution on $\R^2$ is then $$\hat{\gamma} = \frac{\hat{u}}{u^*}.$$ The dimensionless boundary value problem for the correction term becomes
\begin{equation*}
    (\rm BVP'_v)\left\{
    \begin{aligned}
    \frac{\partial \gamma_v(\boldsymbol{\xi},\tau)}{\partial \tau} - \hat{D}\Delta_{\boldsymbol{\xi}} \gamma_v(\boldsymbol{\xi},\tau) &= 0, &\mbox{in $\hat{\Omega}, \tau>0$,}\\	\hat{D}\nabla_{\boldsymbol{\xi}}\gamma_v\cdot\boldsymbol{n}_{\boldsymbol{\xi}} &= -\hat{D}\nabla_{\boldsymbol{\xi}}\hat{\gamma}\cdot\boldsymbol{n}_{\boldsymbol{\xi}}, &\mbox{on $\partial\hat{\Omega}, \tau>0$,}\\
	\gamma_v(\boldsymbol{\xi}, 0) &= 0, &\mbox{in $\hat{\Omega}, \tau=0$.}
	\end{aligned}
	\right.
\end{equation*}
`Post-processing' yields the rescaled solution on $\hat{\Omega}$, $\gamma_P:= u_P/u^*$, given by $$\gamma_P(\boldsymbol{\xi},\tau) = \hat{\gamma}(\boldsymbol{\xi},\tau) + \gamma_v(\boldsymbol{\xi},\tau).$$

The numerical simulations have been conducted in this dimensionless setting. The dimensionless parameter values that were used are listed in Table~\ref{tab:para_all}, unless indicated otherwise. 

\begin{table}\footnotesize
    \centering
    \caption{Standard parameter values used in  numerical analyses in Section~\ref{Sec_Homogeneity} and~\ref{Sec_Results}, corresponding to the dimensionless systems presented in Sections~\ref{subsec:non-dimensional models}.} 
    \begin{tabular}{m{2cm}m{2cm}m{11cm}}
    \toprule
    {\bf Parameter} & {\bf Value} & {\bf Description}  \\
    \midrule
    $\hat{D}$ & $1$ & Diffusion coefficient \\
    $L/R$ & $10$ & Size of the computational domain\\
    $\Delta \tau$ & $0.04$ & Time step \\
    $T$ & $40$ & Total time\\
    $h$ & $0.0875$ & Average mesh size\\
    $\boldsymbol{x}^{(0)}$ & $(0,0)$ & Cell centre \\
    $r$ & $0.01$ & Distance between the off-centre Dirac points and the cell centre\\
    $u_0(\boldsymbol{x})$ & $0$ & Initial condition in $\mathrm{(BVP_S)}$ in $\Omega\setminus\Omega_C$\\
    $\bar{u}_0(\boldsymbol{x})$ & $0$ & Initial condition in $\mathrm{(BVP_P)}$ in $\Omega$ and $\mathbb{R}^2$ in explicit Green's function approach \\
    \bottomrule
    \end{tabular}
    \label{tab:para_all}
\end{table}

\noindent From this point onwards, we abuse notation and return to the notationally more convenient dimensional versions of variables and parameters, considering them as non-dimensional, though.

\section{Comparison of Approaches}
In \citet{Peng2023}, we compared a spatial exclusion model with constant (in time and space) flux distribution over the boundary with a point source model with single Dirac point per cell, with constant-in-time intensity. We found then, that in a setting with small diffusivity a systematic time delay occurs. It was investigated how a choice of initial condition could reduce this delay and hence error in the approximation. Here, we shall not take an initial condition other than the constant 0 on the full domain. However, we employ time-varying intensities for the Dirac delta distributions. As will be shown, this will help in resolving the time delay too.

In this section, we shall investigate, to what extent more complicated versions of spatial exclusion or point source model can be approximated well by simpler versions. `Simpler' may mean: e.g. ignoring spatial inhomogeneity of prescribed the flux density over the boundary, ignoring higher frequency variations in space in this density, or using a smaller number of Dirac points in a point source model.

\subsection{Significance of Heterogeneity in Flux Density}\label{Sec_Homogeneity}
\noindent
Intuitively, when $\rho$ (i.e. the ratio between $A$ and $\phi_0$) is very small then the inhomogeneity is negligible, when there is sufficiently strong diffusion. The diffusion of compounds will then quickly level-off density differences, in particular those originating from the inhomogeneity in flux density. Then, we expect that it is not needed to consider spatial variation in the flux density even in the spatial exclusion model. Hence, the use of multiple Dirac points to represent the cell would not be needed as well. However, when the diffusion coefficient is small spatial variation must be taken into account and the use of multiple Dirac points may be required to get a good approximation by a point source model.

In Figure~\ref{Fig_hole_comp_D_1}, we investigate the impact of the ratio $\rho$ on the solution to $(\rm BVP_S)$ when the inhomogeneous flux density is used. We show the relative error between the solution with homogeneous flux density and solutions with inhomogeneous flux density with the same spatial average, but different amplitude and frequency of variations around this mean. It provides an indicator of the level of spatial difference in the comparison of the solutions in the extracellular domain over time. This inhomogeneity indicator is defined as a relative deviation between the solutions to the spatial exclusion model using the homogeneous and inhomogeneous flux density, respectively:
\begin{equation}
    \label{Eq_hole_comp_rela_error}
    \mathcal{H}(t) = \frac{\|u_S^{homo}(t) - u_S^{inhomo}(t)\|}{\|u_S^{homo}(t)\|},
\end{equation}
where $u^{homo}_S$ and $u^{inhomo}_S$ are the solutions to $(\rm BVP_S)$ with homogeneous and inhomogeneous flux density, respectively. The ratio $\rho$ here is either $0.001$ (blue curve) or $1$ (red curve). As expected, a small $\rho$ indicates a very small fluctuation of the flux density, thus, the inhomogeneity can be neglected here. As a result, instead of using multiple Dirac points to describe the inhomogeneous flux over the cell boundary, we can keep using the cell centre to represent the entire cell as the source of the compounds, when the $\rho$ is small. Furthermore, when the inhomogeneity is not negligible, for instance, when $\rho = 1$, setting the source point only at the cell centre is not enough, in particular, in the early stage of the time domain. Nevertheless, as time proceeds and $n$ increases, the inhomogeneity indicated by $\mathcal{H}(t)$ is decaying significantly. 
\begin{figure}[h!]
    \centering
    \subfigure[$\displaystyle\mathcal{H}_{L^2} = \frac{\|u^{homo}_S(\boldsymbol{x}) - u^{inhomo}_S\|_{L^2(\Omega\setminus\Omega_C)}}{\|u^{homo}_S(\boldsymbol{x})\|_{L^2(\Omega\setminus\Omega_C)}}$]{
    \includegraphics[width = 0.48\textwidth]{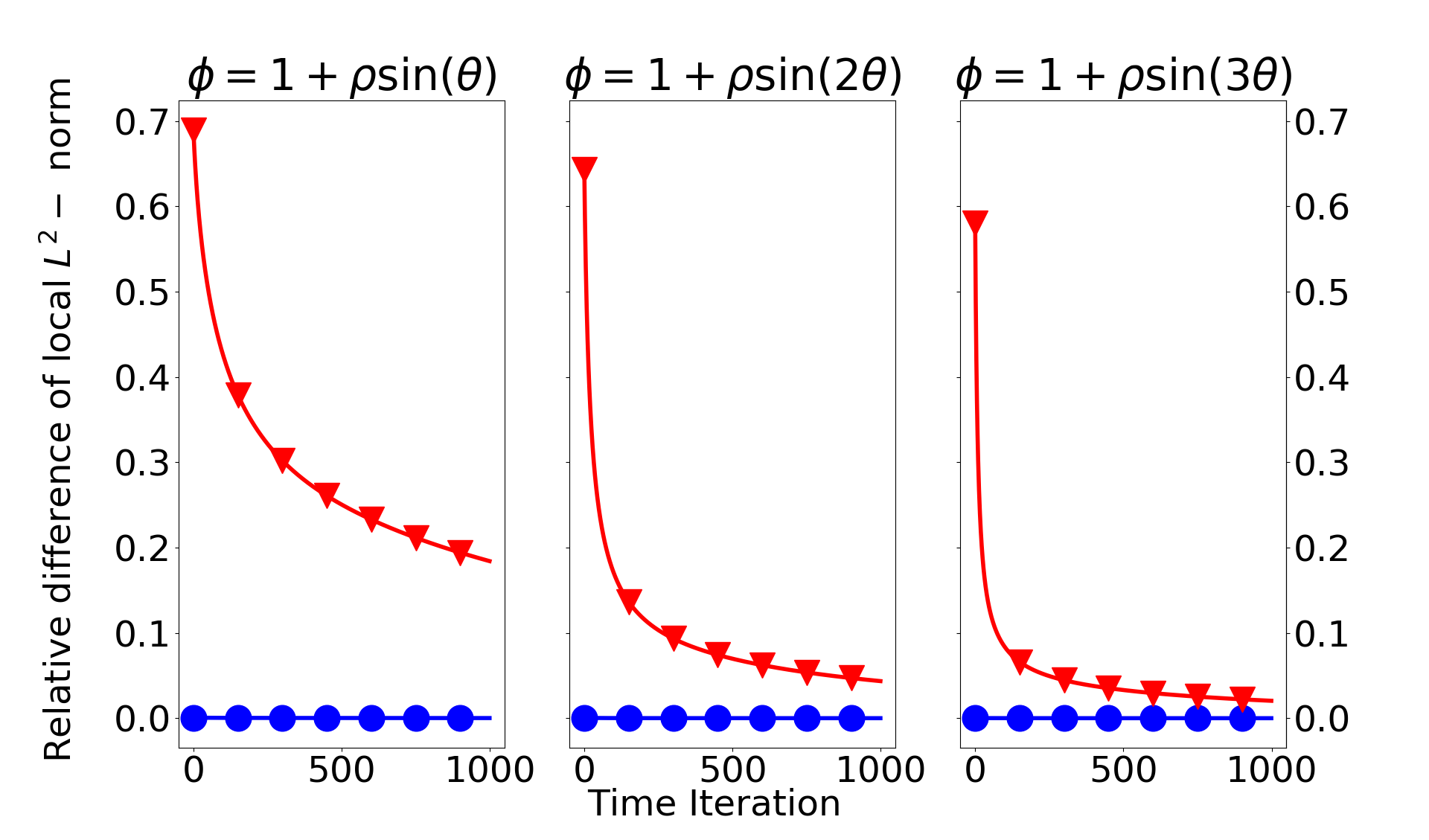}}
    \subfigure[$\displaystyle\mathcal{H}_{H^1}=\frac{\|u^{homo}_S(\boldsymbol{x}) - u^{inhomo}_S\|_{H^1(\Omega\setminus\Omega_C)}}{\|u^{homo}_S(\boldsymbol{x})\|_{H^1(\Omega\setminus\Omega_C)}}$]{
    \includegraphics[width = 0.48\textwidth]{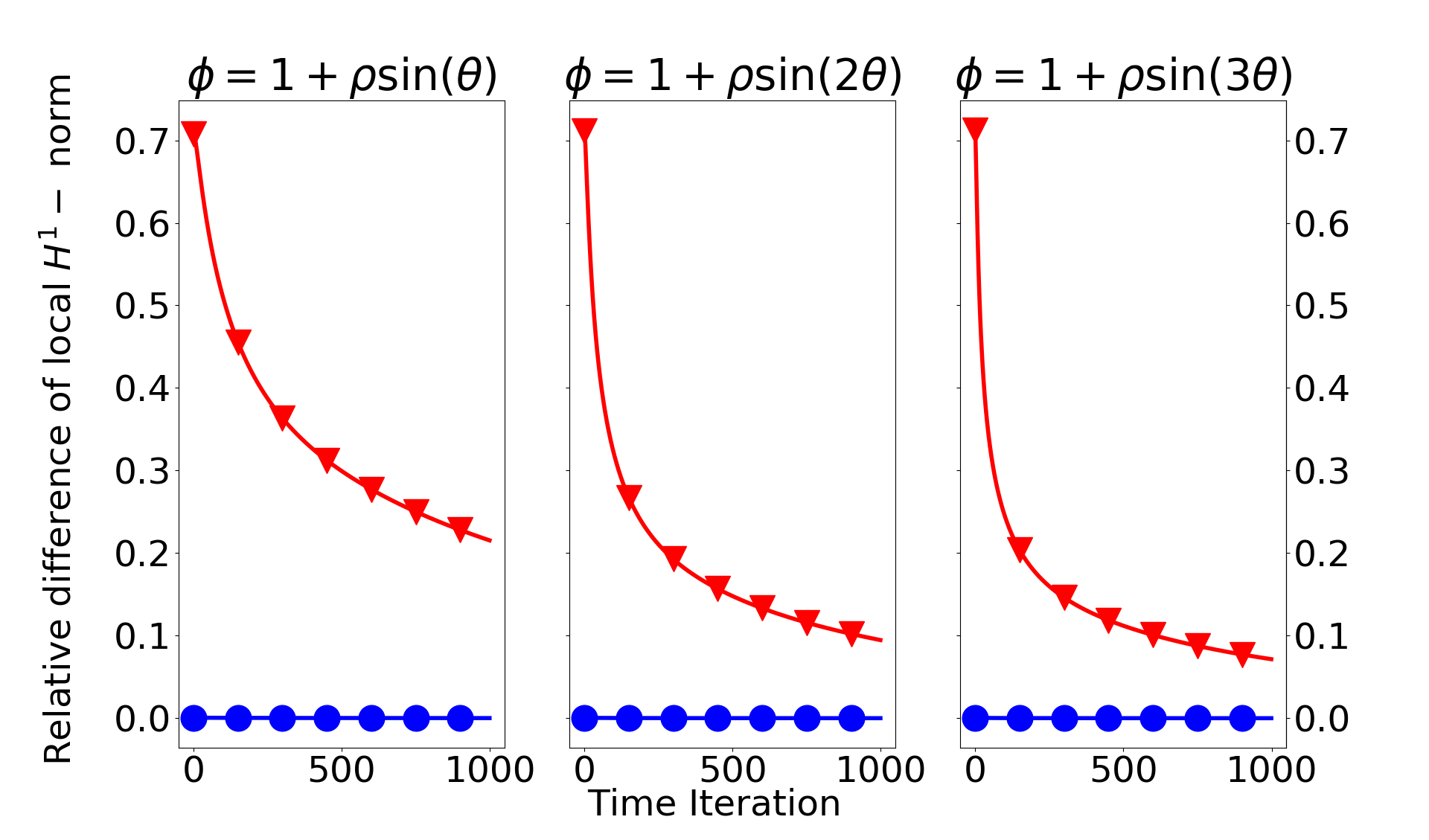}}
    \includegraphics[width = 0.3\textwidth]{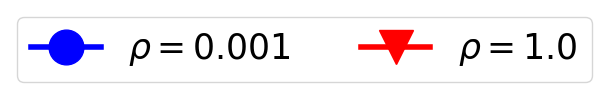}
    \caption{The local norm difference between the solution to $(\rm BVP_S)$ using the homogeneous flux density $\phi(\boldsymbol{x}) = 1$ and the inhomogeneous flux density $\phi(\boldsymbol{x}) = 1+\rho\sin(n\theta)$, where $n\in\{1,2,3\}$ and $\rho = 0.001, 1$. Here, we show the relative $L^2-$ and $H^1-$norm difference in Panel (a) and (b), respectively. In simulations, standard parameter values have been used from Table~\ref{tab:para_all}.}
    \label{Fig_hole_comp_D_1}
\end{figure}

When the frequency of the predefined flux density is large and the diffusion coefficient is relatively large, the inhomogeneity is also negligible. In Figure~\ref{Fig_hole_comp_ratio_1}, when $\rho$ is chosen to be $1$, then the diffusion coefficient $D$ and the parameter $n$ that determines the frequency of the fluctuation, are varied. According to the simulation results presented in Figure~\ref{Fig_hole_comp_ratio_1}, it can be seen that both a large $D$ and a large $n$ %which determines the frequency of the fluctuation in the flux density
result in a small relative deviation. When $n$ is large, the gap between the point of high and low flux density is small, i.e. the distance between the location on the boundary of minimal and maximal flux density. With the same value of $D$, a smaller gap demands a shorter time for the homogenization. Furthermore, a larger diffusion coefficient accelerates this diffusing process in the gap. As a result, the inhomogeneity becomes less significant for a large value of $D$ and $n$.   
\begin{figure}[h!]
    \centering
    \subfigure[$\displaystyle\mathcal{H}_{L^2}=\frac{\|u^{homo}_S(\boldsymbol{x}) - u^{inhomo}_S\|_{L^2(\Omega\setminus\Omega_C)}}{\|u^{homo}_S(\boldsymbol{x})\|_{L^2(\Omega\setminus\Omega_C)}}$]{
    \includegraphics[width = 0.48\textwidth]{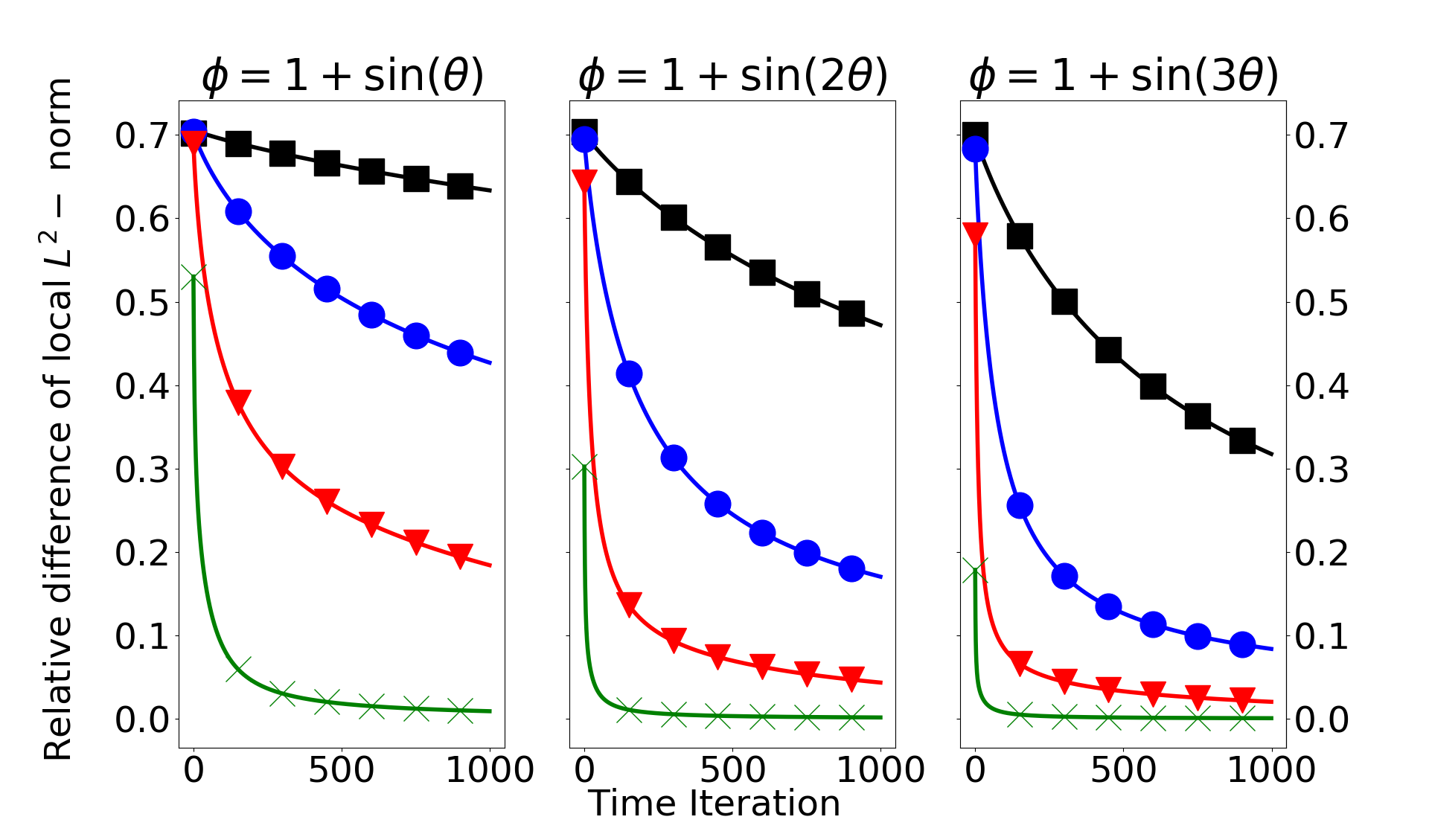}}
    \subfigure[$\displaystyle\mathcal{H}_{H^1}=\frac{\|u^{homo}_S(\boldsymbol{x}) - u^{inhomo}_S\|_{H^1(\Omega\setminus\Omega_C)}}{\|u^{homo}_S(\boldsymbol{x})\|_{H^1(\Omega\setminus\Omega_C)}}$]{
    \includegraphics[width = 0.48\textwidth]{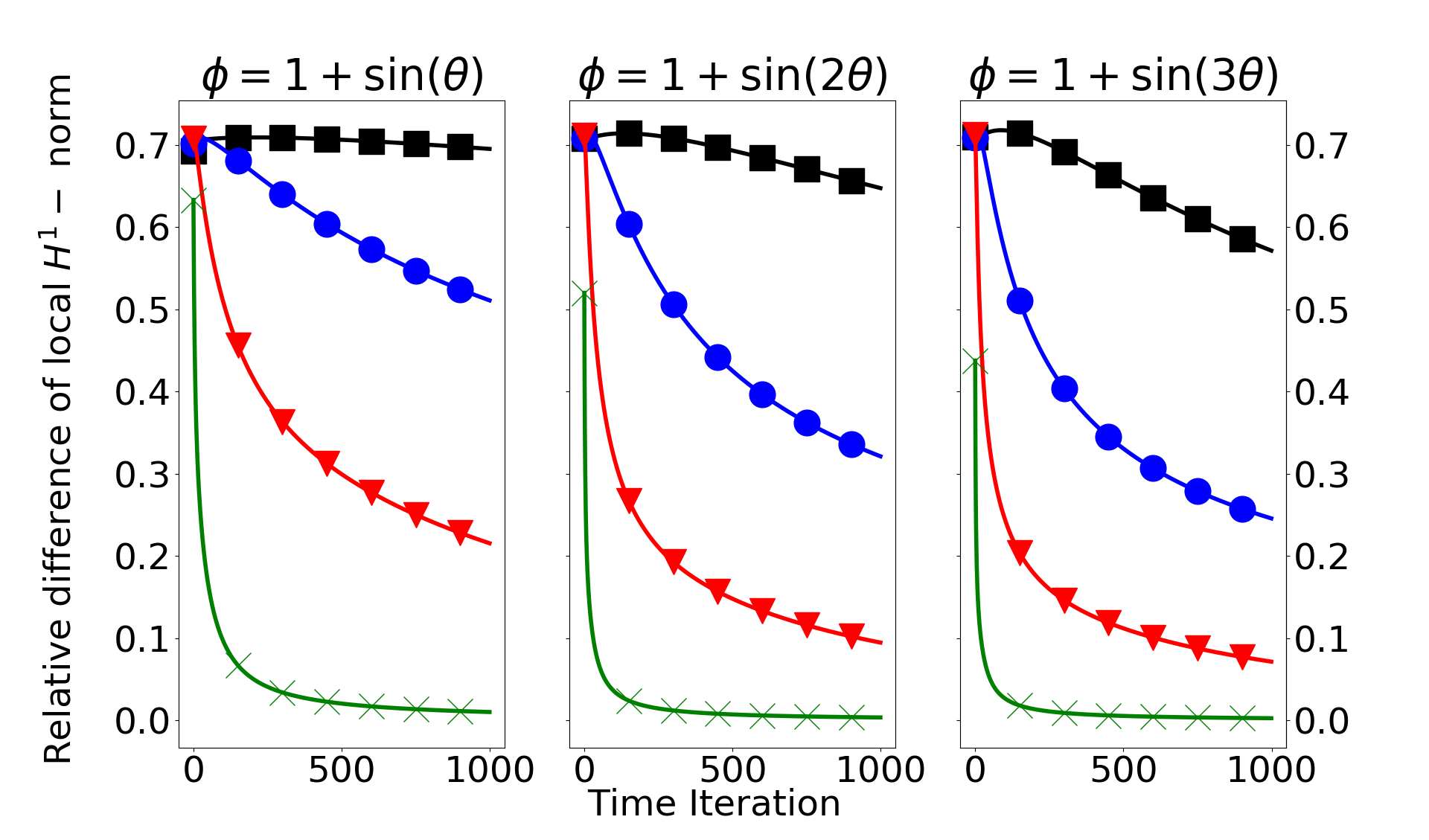}}
    \includegraphics[width = 0.55\textwidth]{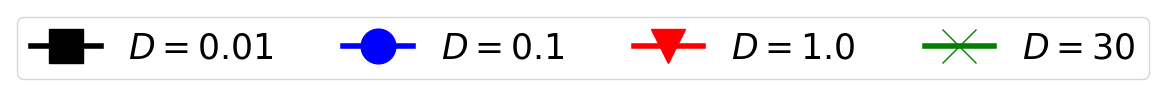}
    \caption{The local norm difference between the solution to $(\rm BVP_S)$ using the homogeneous flux density $\phi(\boldsymbol{x}) = 1$ and the inhomogeneous flux density $\phi(\boldsymbol{x}) = 1+\sin(n\theta)$, where $n\in\{1,2,3\}$ and $D$ varies in $\{0.01, 0.1, 1, 30\}$.Here, we show the relative $L^2-$ and $H^1-$norm difference in Panel (a) and (b), respectively. In simulations, standard parameter values have been used.}
    \label{Fig_hole_comp_ratio_1}
\end{figure}

Hence, in the spatial exclusion model, inhomogeneity in the flux density on the cell boundary may be ignored 
-- especially on the larger time scale -- when the diffusion coefficient $D$ is sufficiently large (roughly, $D\geqslant 0.1$), the fluctuation ratio $\rho$ is small (ca. $\rho\leqslant 0.1$) or the frequency of the fluctuation $n$ is large (ca. $n\geqslant 3$). Thus, an approximately circular cell shape and rather homogenous flux density over the cell boundary may be replaced in the spatial exclusion model effectively by a circular cell shape and homogeneous flux density, under these conditions too. The inhomogeneity of the flux density is either not significant or the extent of its effects decays relatively fast as time proceeds. A multi-Dirac approach in a points source approximation of the spatial exclusion model is not needed under those circumstances.

\subsection{Single-Dirac versus Multi-Dirac Approach}\label{Sec_Results}
\noindent
According to the results in the previous section, the spatial inhomogeneity of the flux density can be neglected in the spatial exclusion model for a larger diffusion coefficient, a smaller ratio $\rho$ and a larger value of $n$. In this section, we present the numerical results of different cases in which the spatial inhomogeneity is expected to be relevant. Here, we therefore only consider the predefined flux density of $\phi(\boldsymbol{x}) = 1+\rho\sin(n\theta)$, where $n = 1,2$. Whether the inhomogeneity is significant depends further on the value of $\rho$ and the diffusion coefficient $D$. The other parameter values used in the simulations, are listed in Table~\ref{tab:para_all}, unless they are specifically indicated otherwise.

The intensity of Dirac points (given by Equations~\eqref{Eq_dipole_sol} and~\eqref{Eq_tripole_sol}--\eqref{eq:phiC triple points}) may take large values. Therefore, the numerical solution obtained by solving $(\rm BVP_P)$ naively might be very large and unreliable. Solving $(\rm BVP_P)$ with the explicit Green's function approach that was discussed in Section~\ref{Sec_Green_approach}, can resolve this issue to some extent. Hence, in some figures of this section, we only show the graph obtained by the explicit Green's function approach for the multi-Dirac point source model, since the graph obtained by solving the BVP naively, is possibly inaccurate.
\vskip 2mm

\subsubsection{Polarized Flux Distribution: $n=1$}

\noindent According to Proposition~\ref{Prop_Condition}, it is of interest to compare the flux density over the (virtual) cell boundary, generated by the Dirac point sources, with the prescribed flux density $\phi_1(\boldsymbol{x}_\theta)$. So far, for the point source model, there are various methods computing the flux over the virtual cell boundary: the approximated flux defined in Equation~\eqref{Eq_1_Dirac_bnd_flux_t} and postprocessing the numerical solutions for the flux density by computing $D\nabla u_P(\boldsymbol{x}, t)\cdot\boldsymbol{n}$ numerically from these. For the latter, if the single-Dirac approach is used, $u_P(\boldsymbol{x},t)$ is solved directly by FEM; if multi-Dirac approach is used, due to the possible numerical instability, $u_P(\boldsymbol{x},t)$ is solved both directly and with the explicit Green's function approach. 

The flux densities over the (virtual) cell boundary in the point source model, computed in these various ways, and the predefined flux in the Spatial Exclusion Model are shown in Figure~\ref{Fig_1_Dirac_bnd_flux_ratio_1_D_1_r_001_ALL} at different times. Figure~\ref{Fig_1_Dirac_ratio_1_D_1} shows quantities that measure quality of approximation between the point source model (in variants and different methods of numerically solving the equations) and the spatial exclusion model, when the flux density is given by $\phi(\boldsymbol{x}) = 1+\sin(\theta)$.

\begin{figure}[h!]
    \centering
    \subfigure[$t = 0.04$]{
    \includegraphics[width=0.48\linewidth]{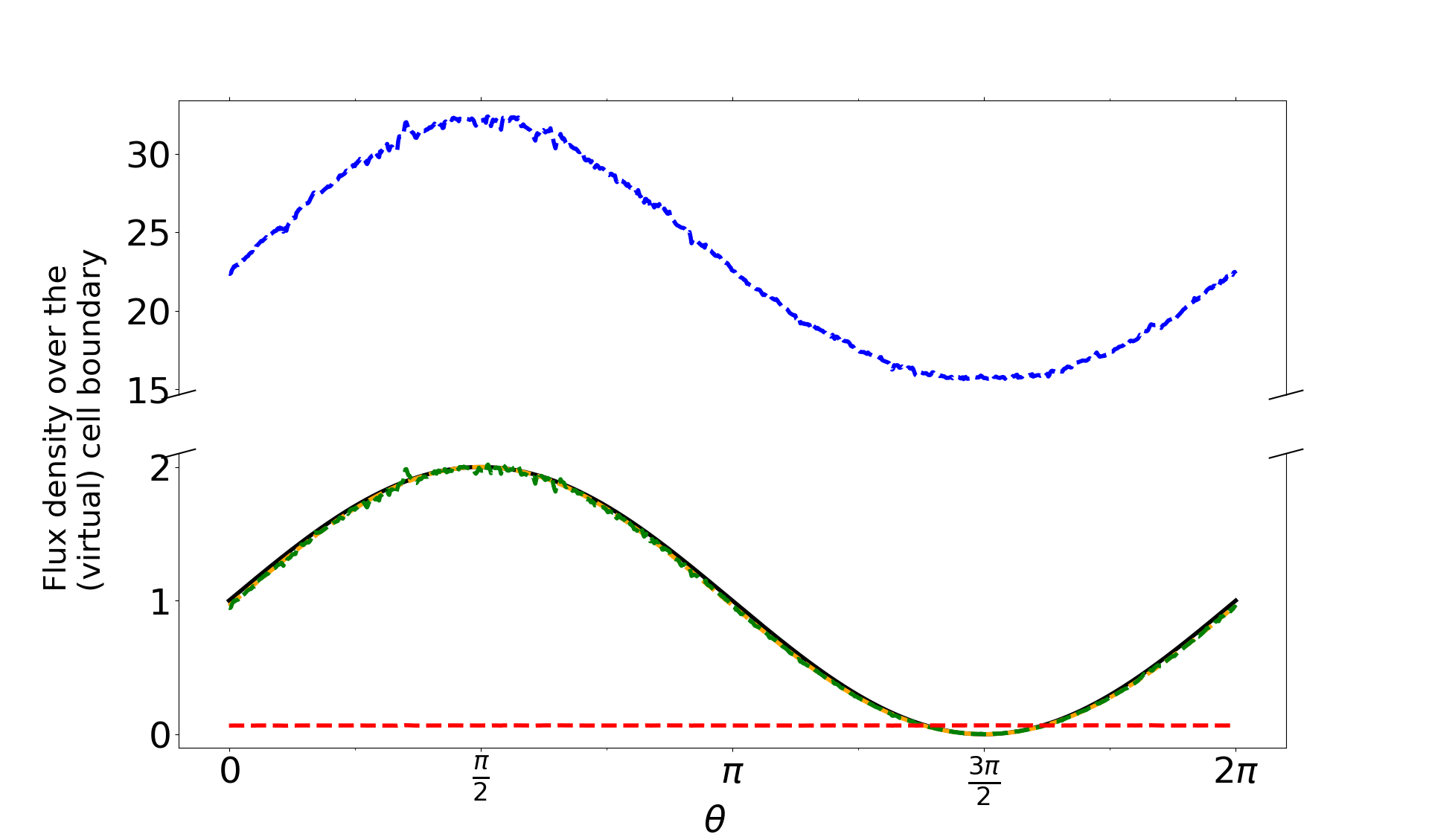}}
    \subfigure[$t = 0.8$]{
    \includegraphics[width=0.48\linewidth]{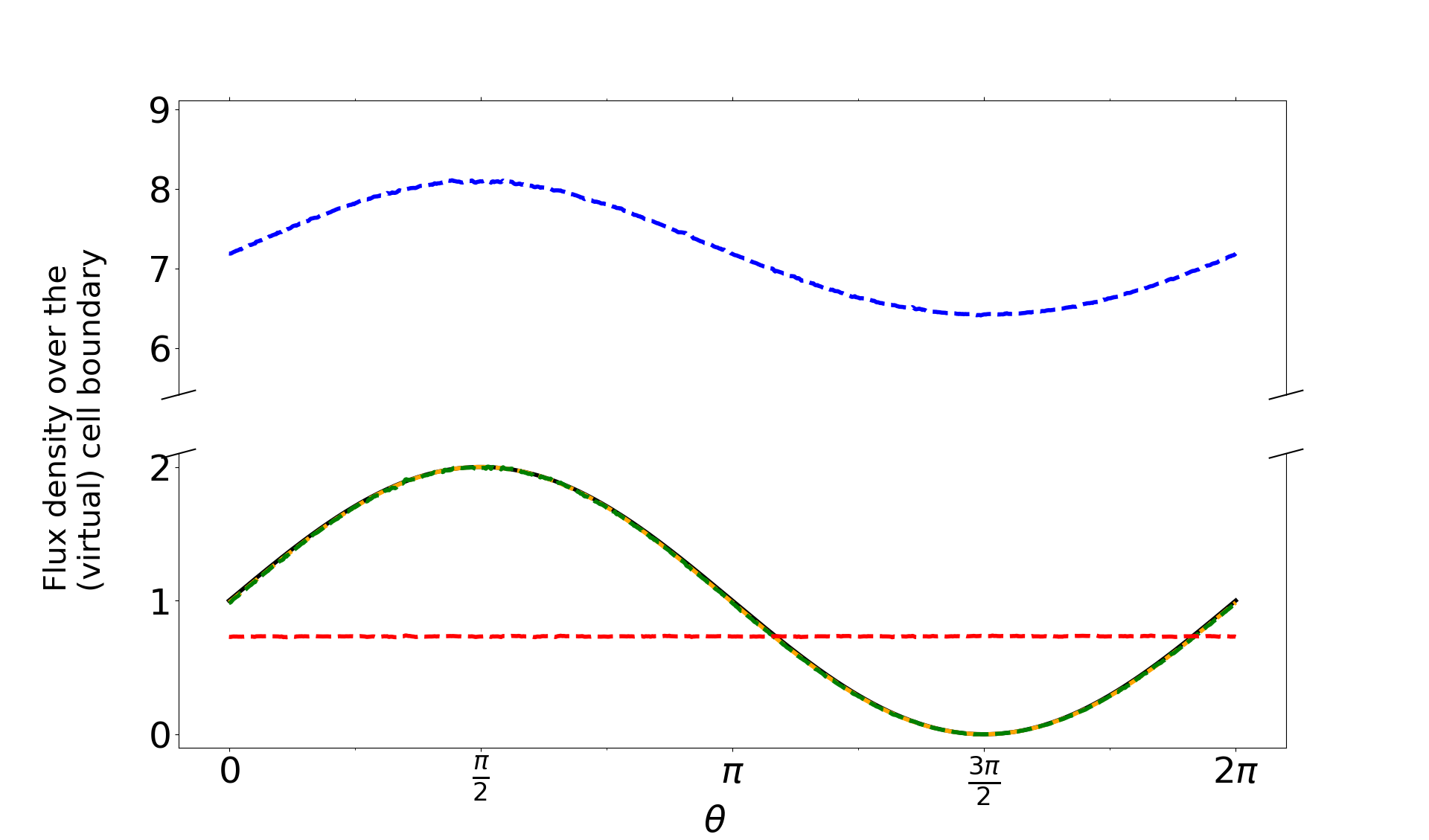}}
    \subfigure[$t = 2.0$]{
    \includegraphics[width=0.48\linewidth]{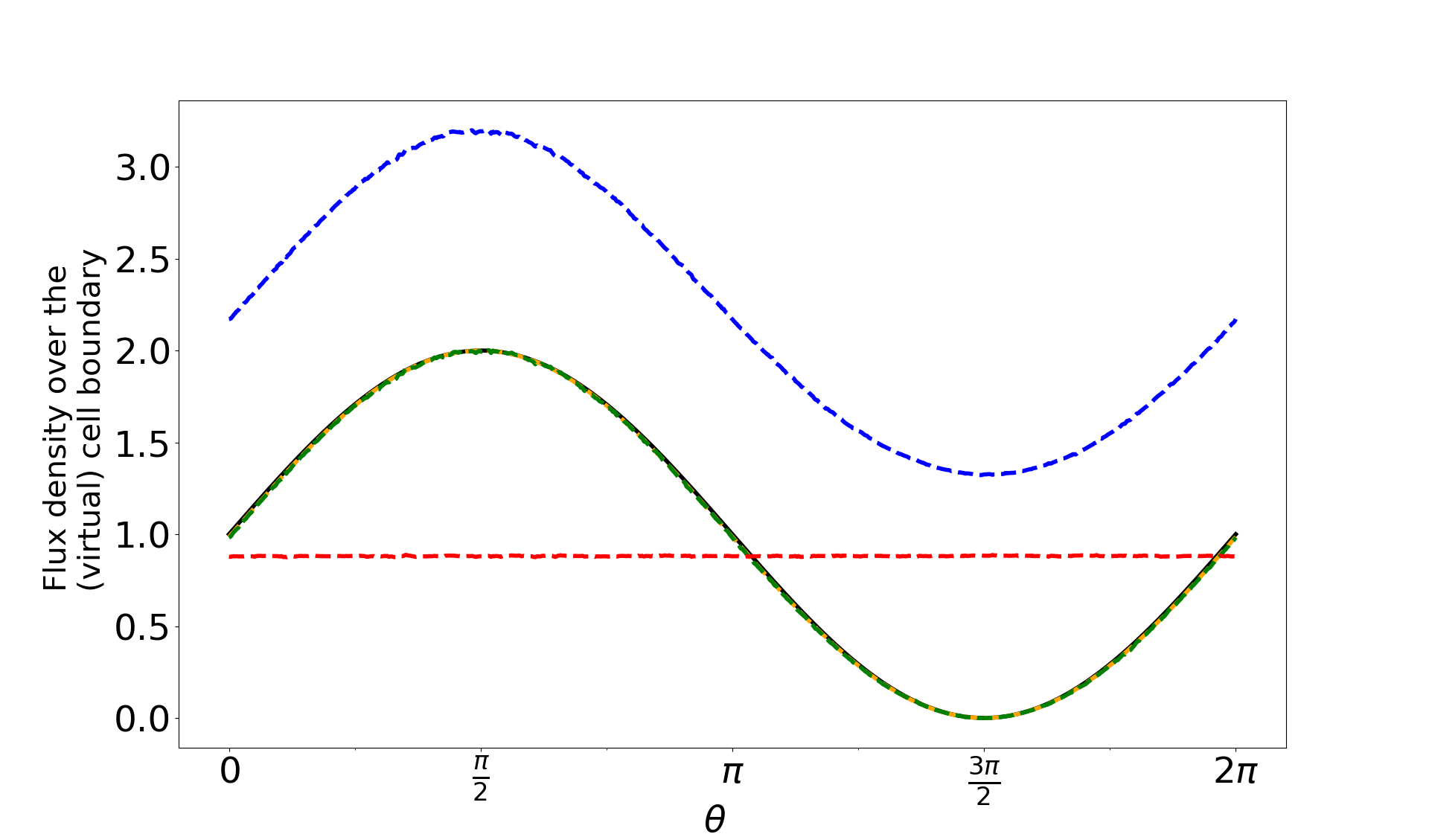}}
    \subfigure[$t = 4.0$]{
    \includegraphics[width=0.48\linewidth]{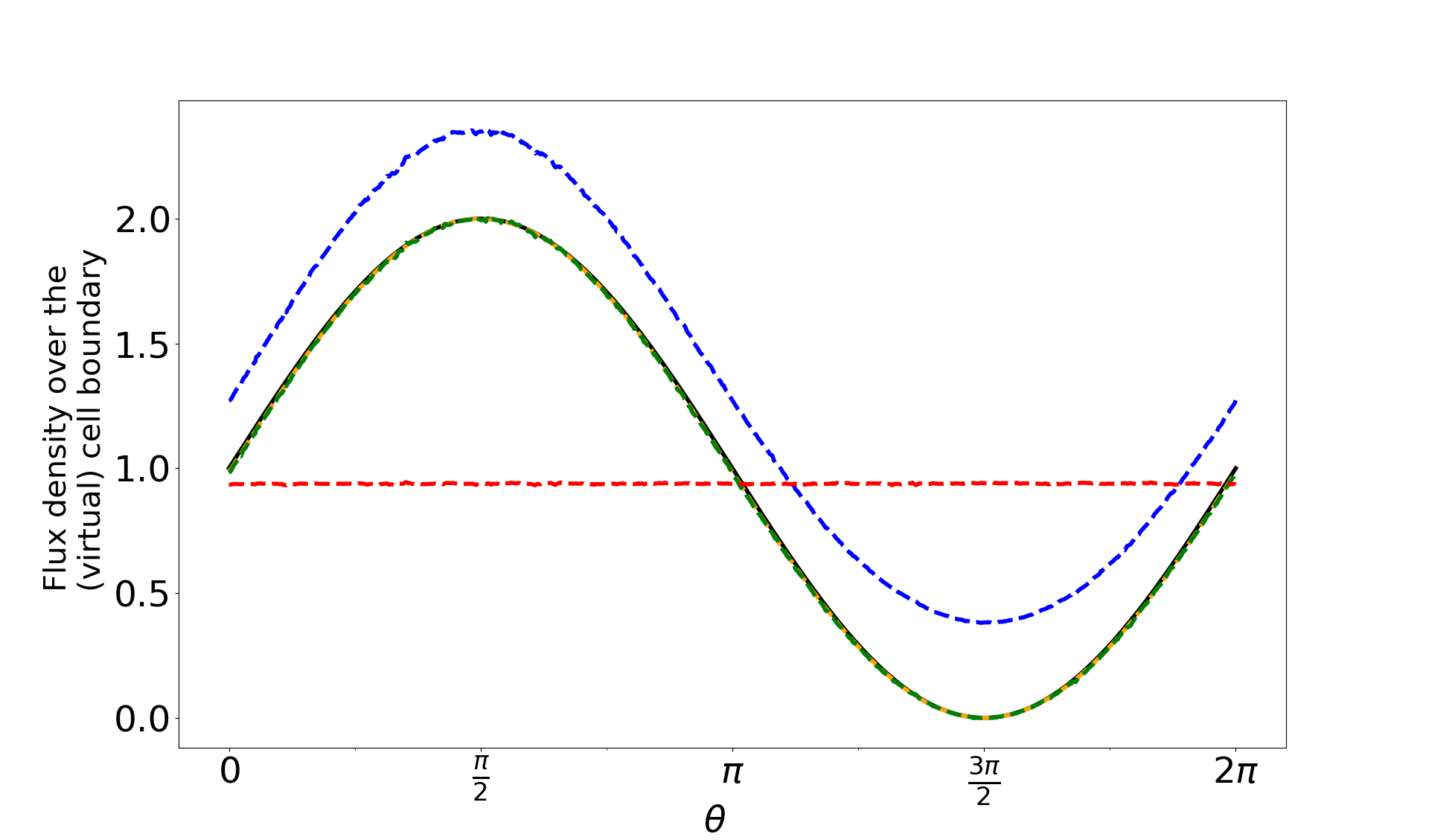}}
    \subfigure[$t = 8.0$]{
    \includegraphics[width=0.48\linewidth]{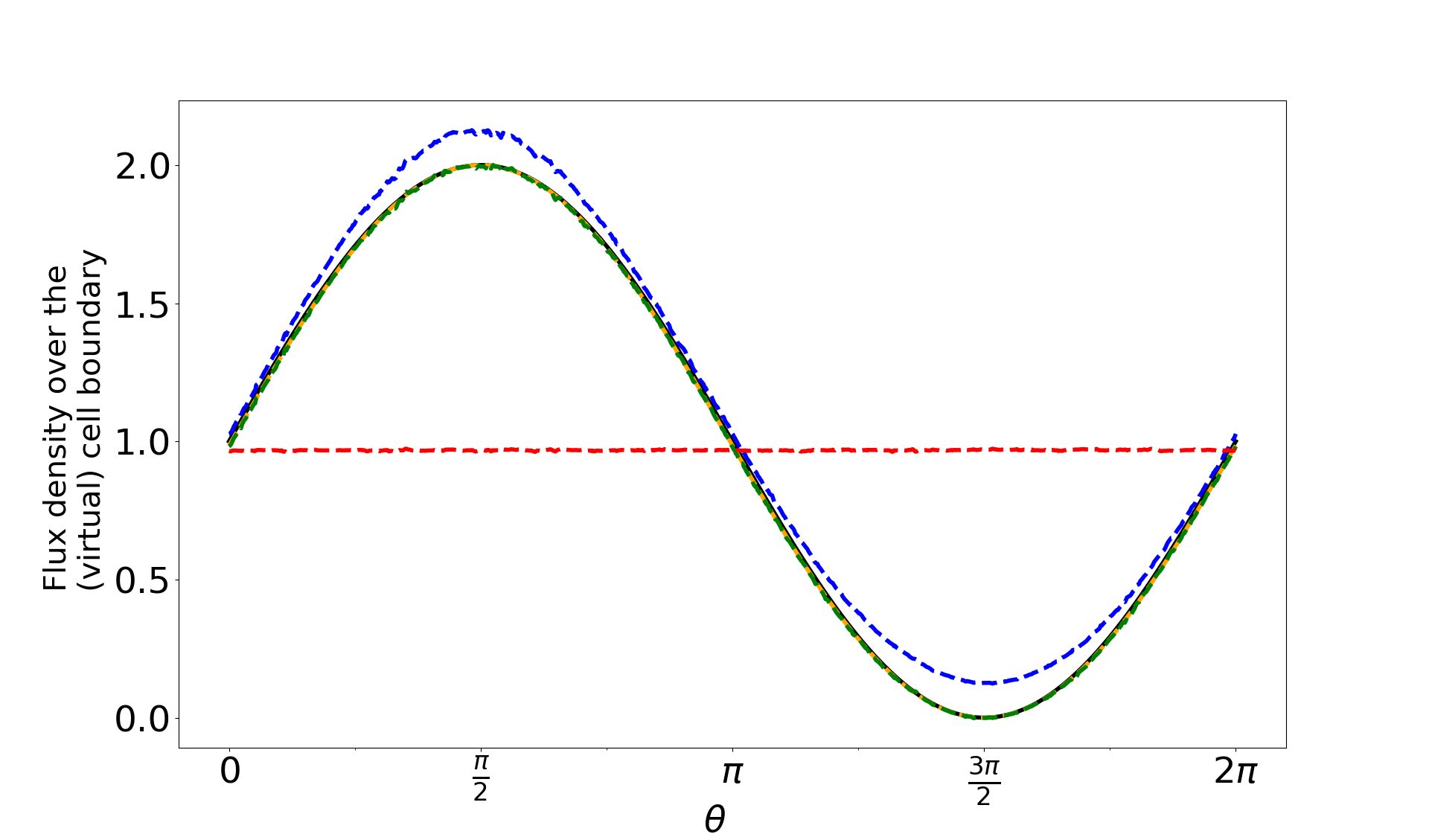}}
    \subfigure[$t = 40$]{
    \includegraphics[width=0.48\linewidth]{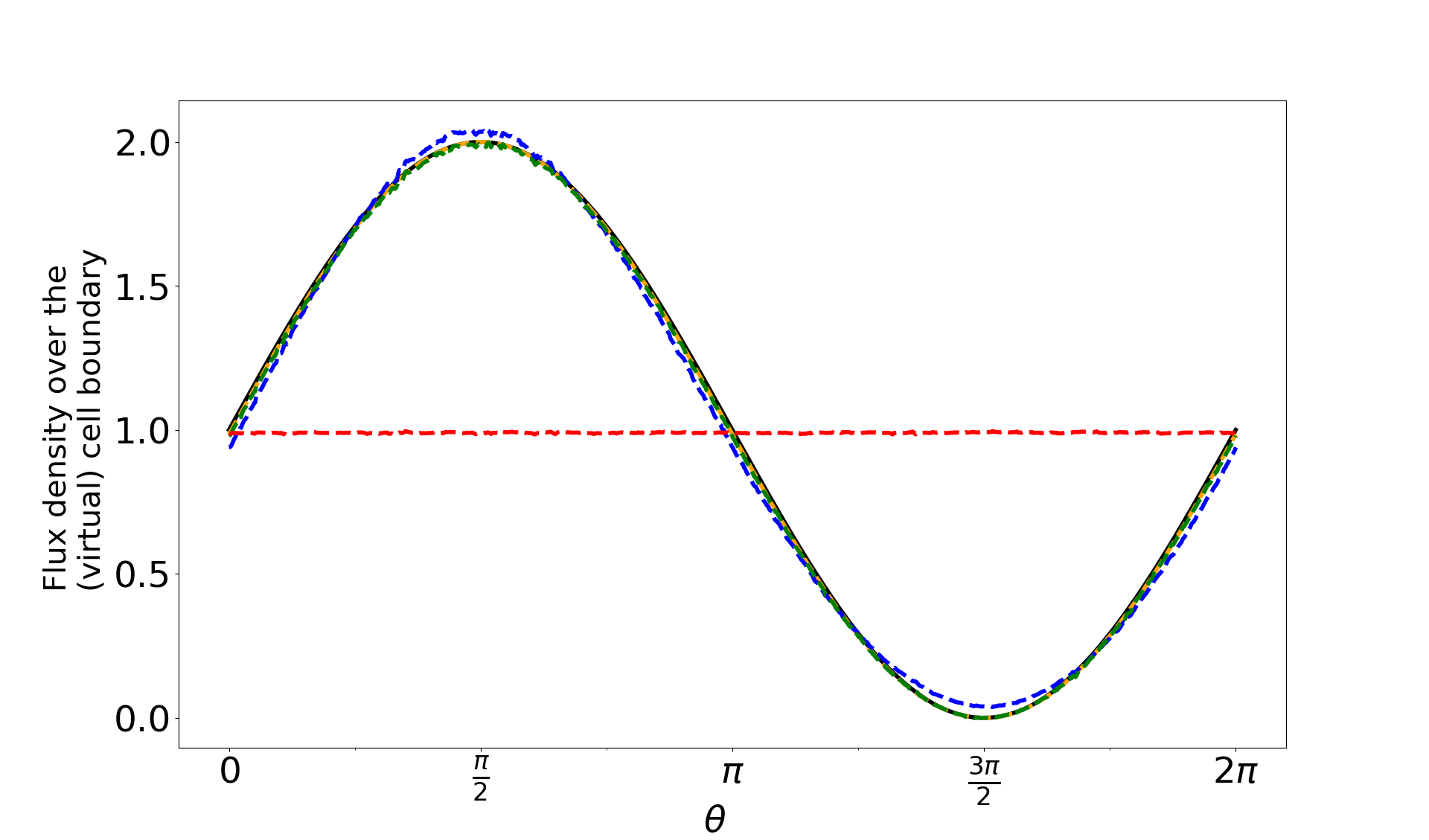}}
    \includegraphics[width = 0.96\linewidth]{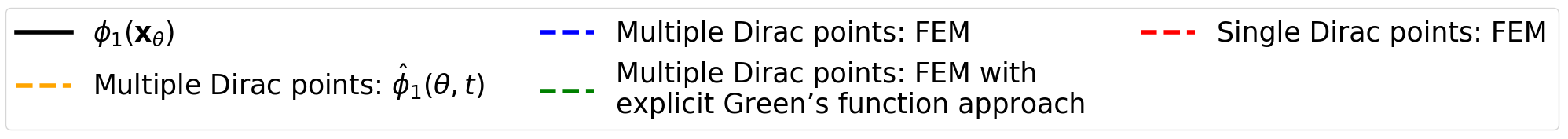}
    \caption{The flux over the (virtual) cell boundary $\partial\Omega_C$ at different time points is shown in various point source models. The black solid curve represents the predefined flux (given by Equation~\eqref{Eq_phi} with $n=1$) in the spatial exclusion model. The orange curve represents the approximated flux defined by Equation~\eqref{Eq_1_Dirac_bnd_flux_t}. The blue and green dashed curves are the results of numerical simulation of the point source model with multiple Dirac points: the orange one , the blue and green ones are solved by FEM directly and with the explicit Green's function approach (see Section~\ref{Sec_Green_approach}), respectively. The red dashed curve stands for the flux post-processed by the point source model using one centre Dirac point. Note that in Panel (a) the y-axis is discrete, and the upper and bottom parts are not on the same scale. Parameter values are taken from Table~\ref{tab:para_all}.}
    \label{Fig_1_Dirac_bnd_flux_ratio_1_D_1_r_001_ALL}
\end{figure}

From the beginning (Figure~\ref{Fig_1_Dirac_bnd_flux_ratio_1_D_1_r_001_ALL}(a)-(b)), we can already observe that for the multi-Dirac approach solved by FEM directly (blue dashed curve), even though we ensured the same flux density at the location of the extreme values, the boundary flux is significantly different from the graphs obtained by the other methods. To highlight this discrepancy, we split the vertical axis into two parts that are not at the same scale. As time proceeds, the blue dashed curve moves closer to and in the end mostly overlaps with the predefined flux density (black solid curve). This explains why in Figure~\ref{Fig_1_Dirac_ratio_1_D_1}, for all the interesting quantities, there exists a large peak in the $L^2$- and $H^1$-norm differences in the early part of the simulations. 

Contrary to the blue dashed curve in Figure~\ref{Fig_1_Dirac_bnd_flux_ratio_1_D_1_r_001_ALL}, the approximated flux density (orange dashed curve) over the (virtual) cell boundary defined in Equation~\eqref{Eq_1_Dirac_bnd_flux_t} and post-processed flux computed from the solution to the point source model with multi-Dirac points by the explicit Green's function approach (green dashed curve), already overlap with the predefined boundary flux (red solid curve). In other words, the approximated flux provides a high-quality approximation to the predefined flux over the time domain of the simulation. 

\begin{figure}[h!]
    \centering
    \subfigure[$\|u_S-u_P\|_{L^2(\Omega\setminus\Omega_C)}$]{
    \includegraphics[width = 0.48\textwidth]{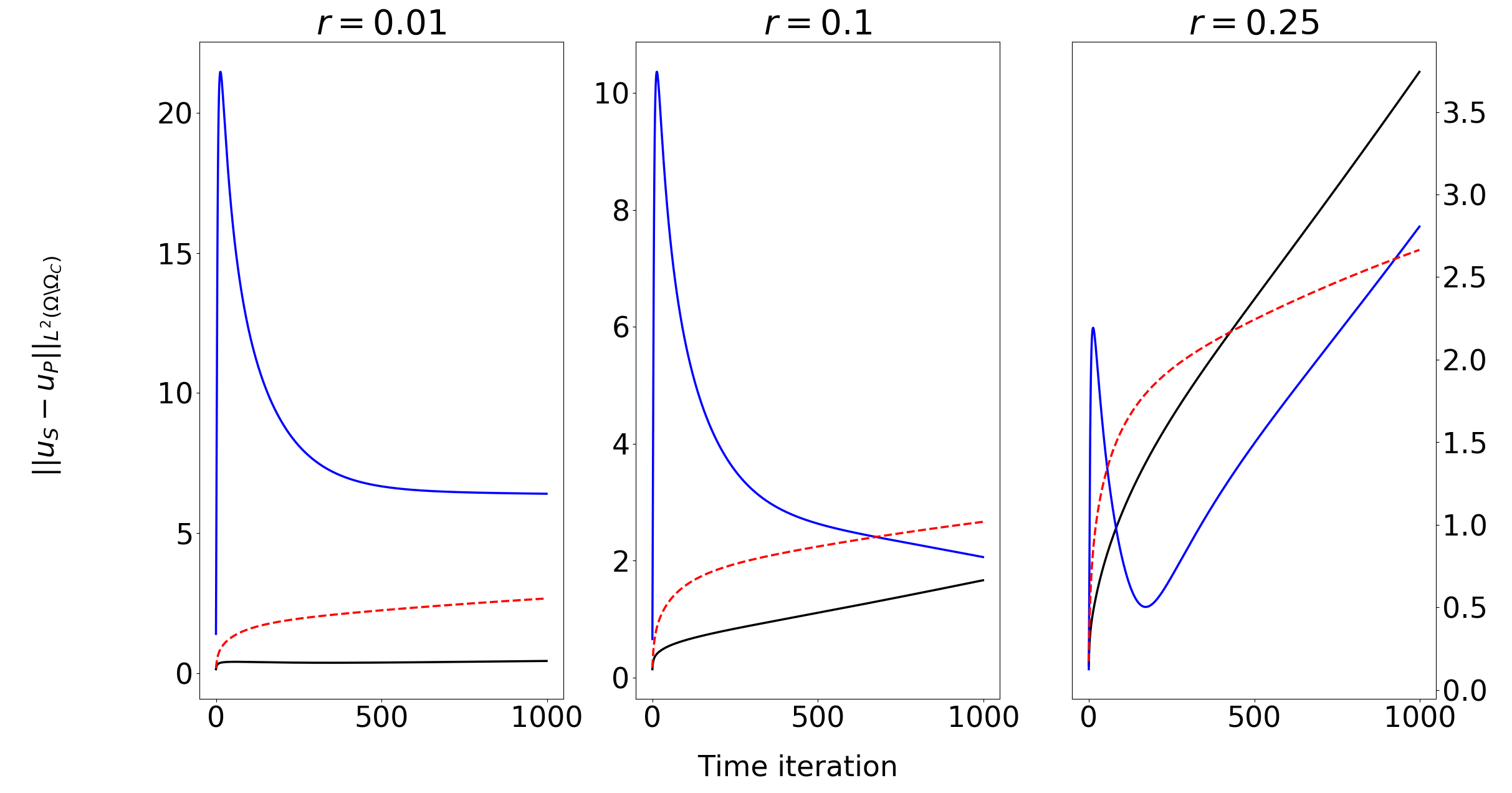}}
    \subfigure[$\|u_S-u_P\|_{H^1(\Omega\setminus\Omega_C)}$]{
    \includegraphics[width = 0.48\textwidth]{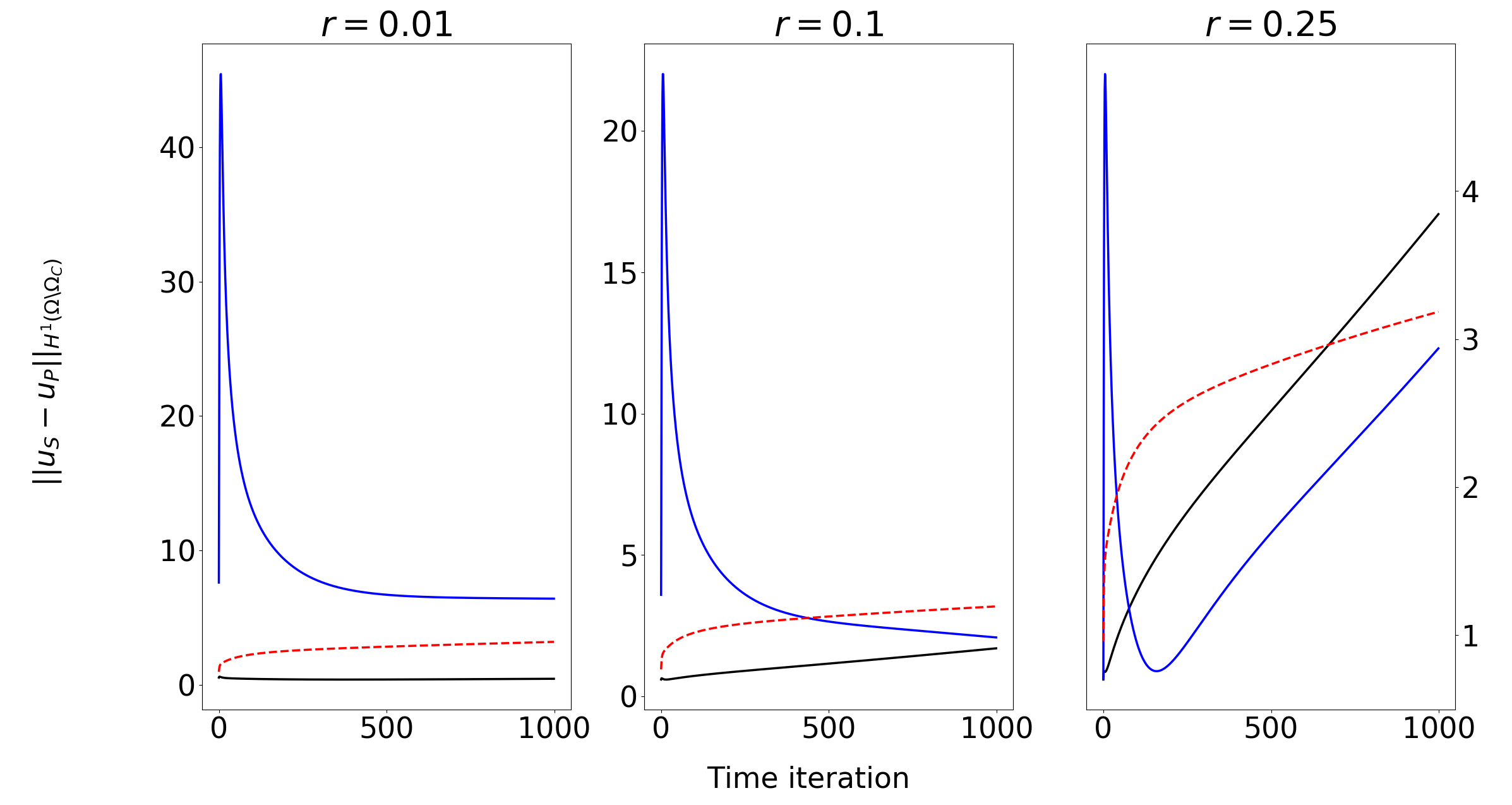}}
    \subfigure[$c^*(t)$]{
    \includegraphics[width = 0.48\textwidth]{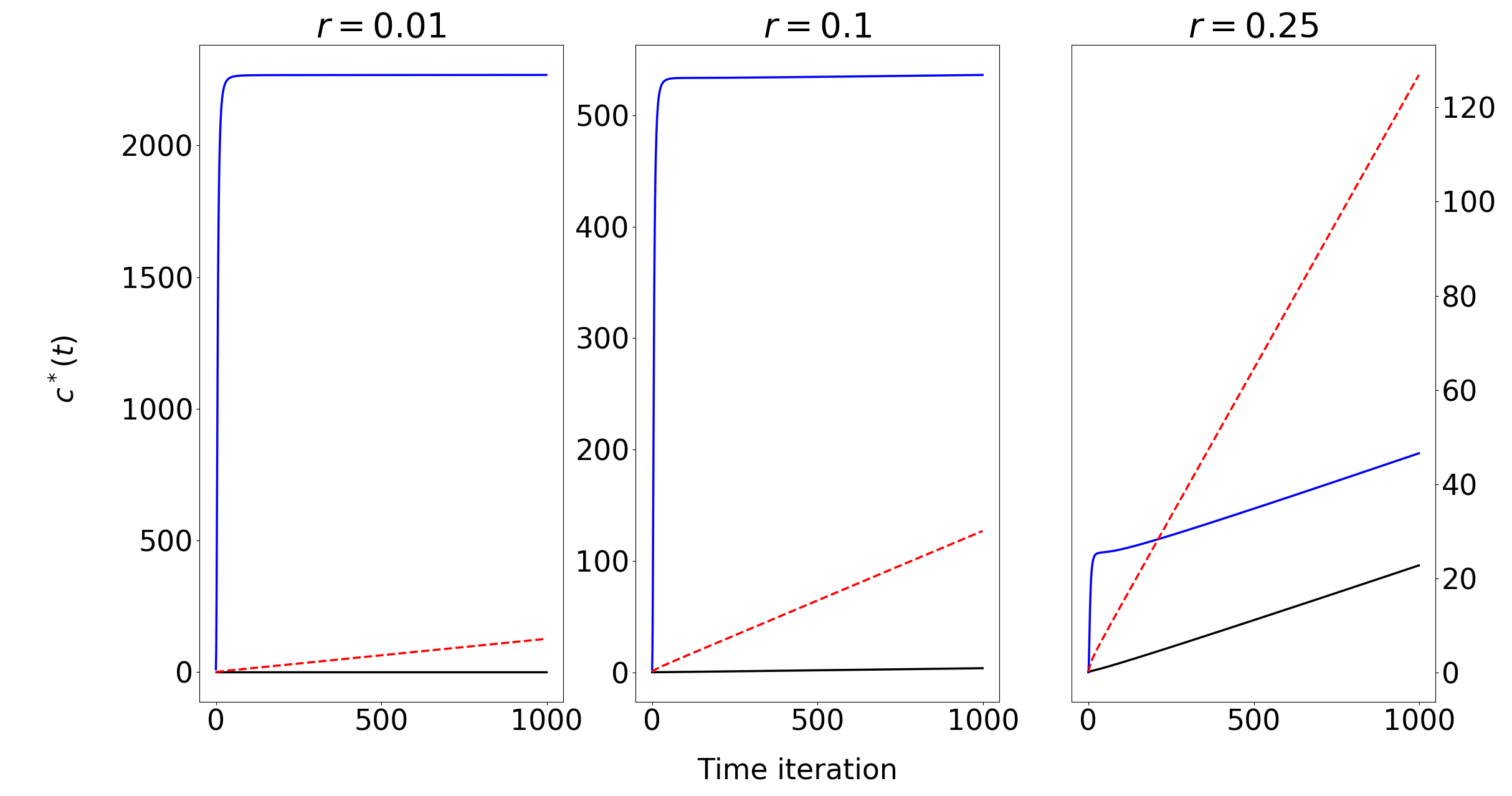}}
    \subfigure[$\|\phi(\boldsymbol{x}) - D\nabla u_P\cdot\boldsymbol{n}\|_{L^2(\partial\Omega_C)}$]{
    \includegraphics[width = 0.48\textwidth]{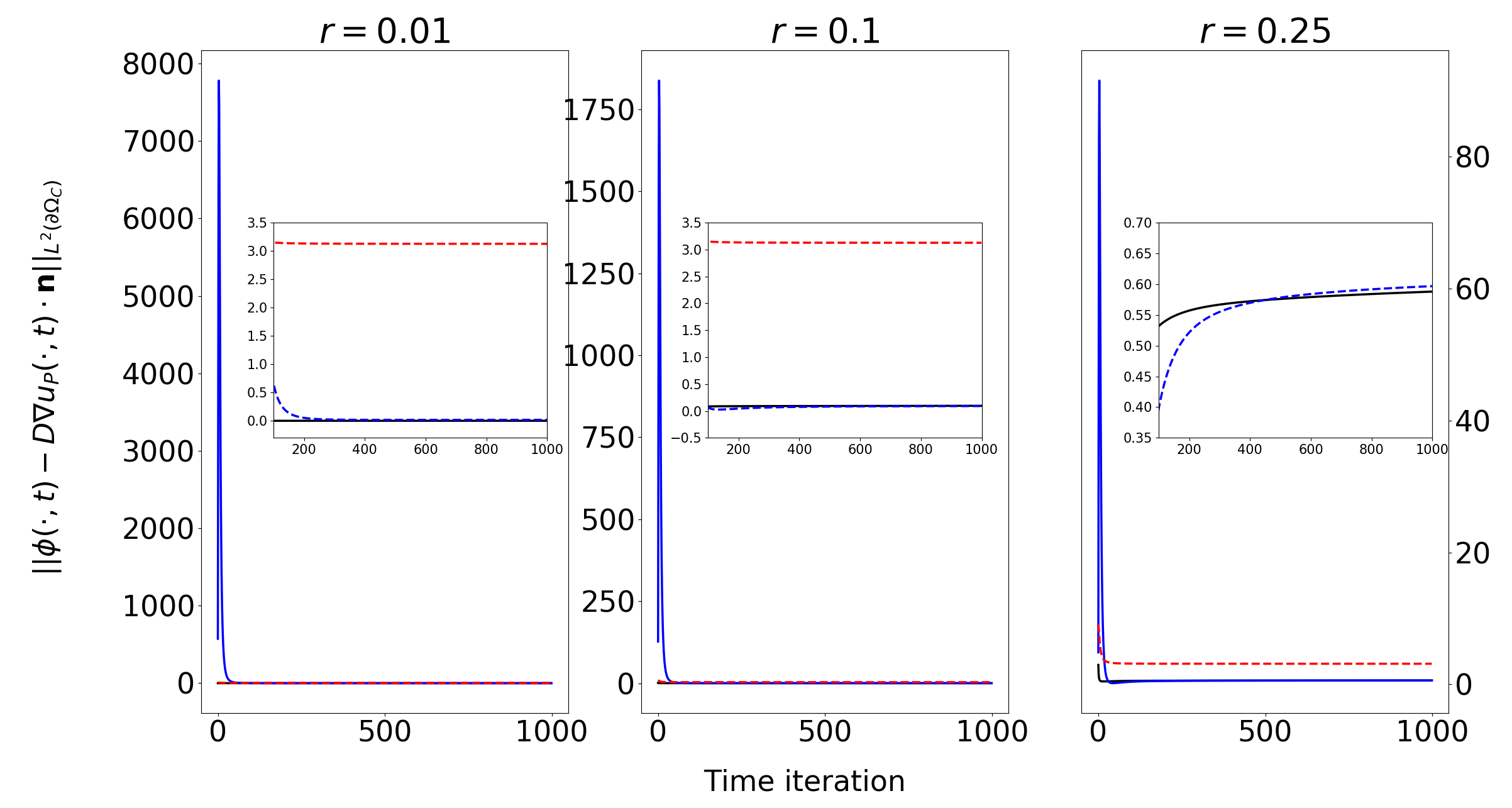}}
    \includegraphics[width = \textwidth]{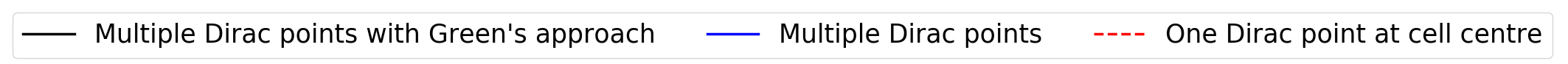}
    \caption{Comparison of single- and multi-Dirac approach to spatial exclusion model for the polarized flux density with large fluctuations $(\rho=1)$, i.e. $\phi(\boldsymbol{x}) = 1+\sin(\theta)$. The simulations were made with parameter values from Table~\ref{tab:para_all}. The local $L^2-$ (Panel (a)) and $H^1-$norm difference (Panel (b)) between the solutions to the spatial exclusion model and the point source model with one (red dashed curve) and multiple Dirac points (black -- explicit Green's function approach -- and blue solid curve -- direct FEM approach), respectively. The $c^*(t)$ and the boundary flux difference at each time step are shown in Panel (c) and (d), respectively.}
    \label{Fig_1_Dirac_ratio_1_D_1}
\end{figure}

Moreover, numerically, it is more stable to solve the multi-Dirac point source model with the explicit Green's function approach, particularly when the intensity of the Dirac points is very large,  as can be seen in Figure~\ref{Fig_1_Dirac_ratio_1_D_1}. One can observe that for $r=0.01$, solving the multi-Dirac point source model with this approach gives the smallest value in all the quantities. The single-Dirac approach tends to give worse results, especially for the smaller values of $r$. 

In Figure~\ref{Fig_1_Dirac_ratio_1_D_1}, with the value of diffusion coefficient $D=1$, the single-Dirac approach still consists of a systematic time delay, that is, the compounds take more than one time step to travel from the cell center to the cell boundary. As a result, in Figure~\ref{Fig_1_Dirac_ratio_1_D_1}(d), the boundary flux difference of the single-Dirac approach is relatively large and then it decays till the steady state, while in the multi-Dirac approach, the boundary flux difference already starts small. Furthermore, the rest of the subfigures confirms the benefit of using the multi-Dirac approach when the inhomogeneity of the flux density is significant: as long as $r$ is small and close to $0$, then the local norm difference in the multi-Dirac approach is always smaller than in the single-Dirac approach.

When the diffusion coefficient is sufficiently large, i.e. the compounds take less than one time step to travel from the cell centre to the cell boundary, then there is no error caused by this systematic time delay (see \citet{Peng2023} for more details). In the meantime, even though the inhomogeneous flux density leads to heterogeneity in the extracellular environment, thanks to the large diffusion coefficient, the concentration discrepancy can be smoothened quickly. As a result, the heterogeneity is not significant over the entire time range; see Figure~\ref{Fig_1_Dirac_ratio_1_D_30} as an example. For all the options of $r$, even though the multi-Dirac approach can well describe the inhomogeneous boundary flux: Figure~\ref{Fig_1_Dirac_ratio_1_D_30}(c)-(d) show the significant difference of $c^*(t)$ and $\|\phi(\boldsymbol{x},t) - D\nabla u\cdot\boldsymbol{n}\|_{L^2(\partial\Omega_C)}$ between the multi-Dirac and single-Dirac approach. However, regarding the concentration in the extracellular environment, even though in the beginning, the multi-Dirac approach always has a smaller error, this error increases linearly and faster than the error of the single-Dirac approach.   
\begin{figure}[h!]
    \centering
    \subfigure[$\|u_S-u_P\|_{L^2(\Omega\setminus\Omega_C)}$]{
    \includegraphics[width = 0.48\textwidth]{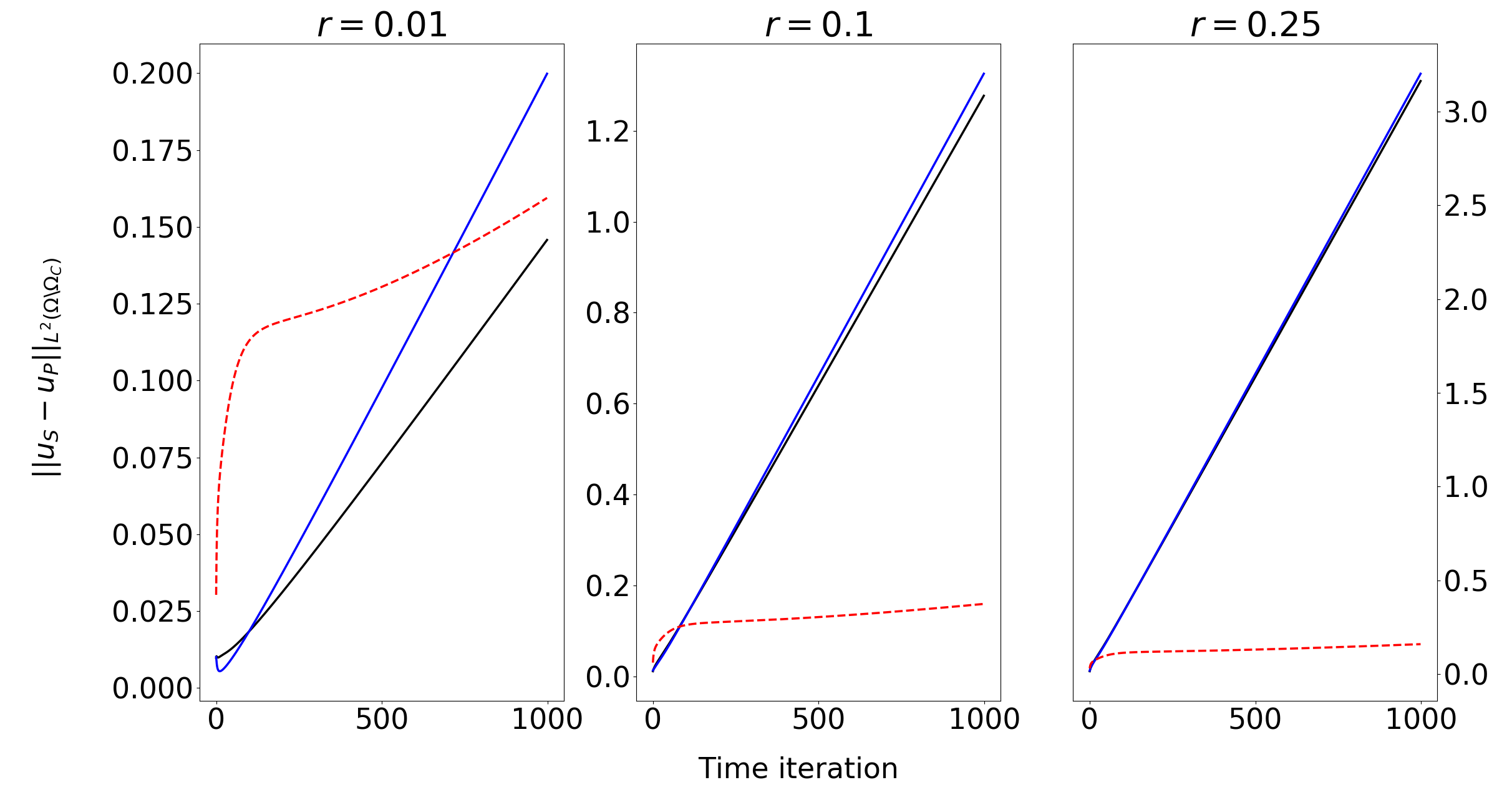}}
    \subfigure[$\|u_S-u_P\|_{H^1(\Omega\setminus\Omega_C)}$]{
    \includegraphics[width = 0.48\textwidth]{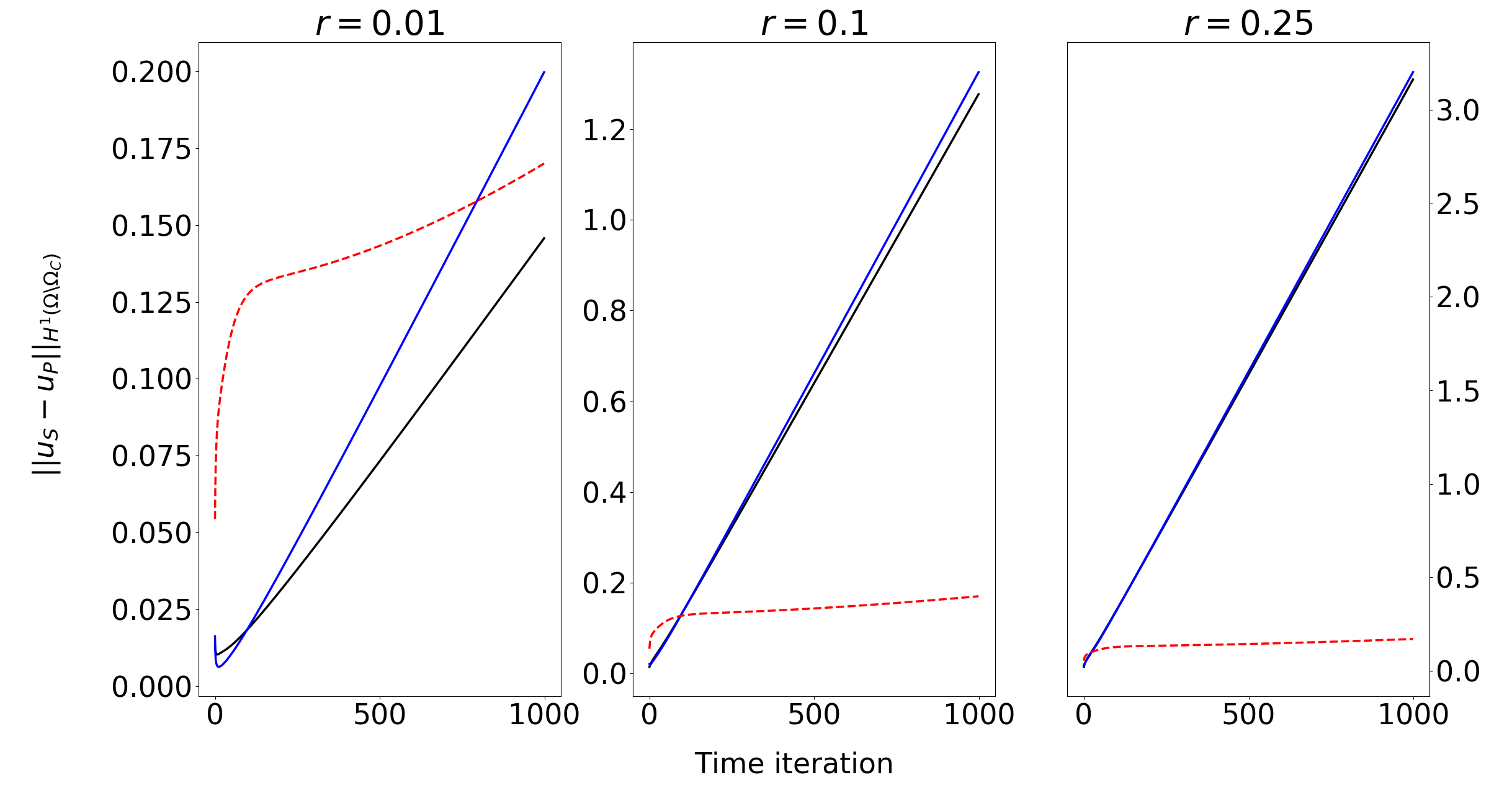}}
    \subfigure[$c^*(t)$]{
    \includegraphics[width = 0.48\textwidth]{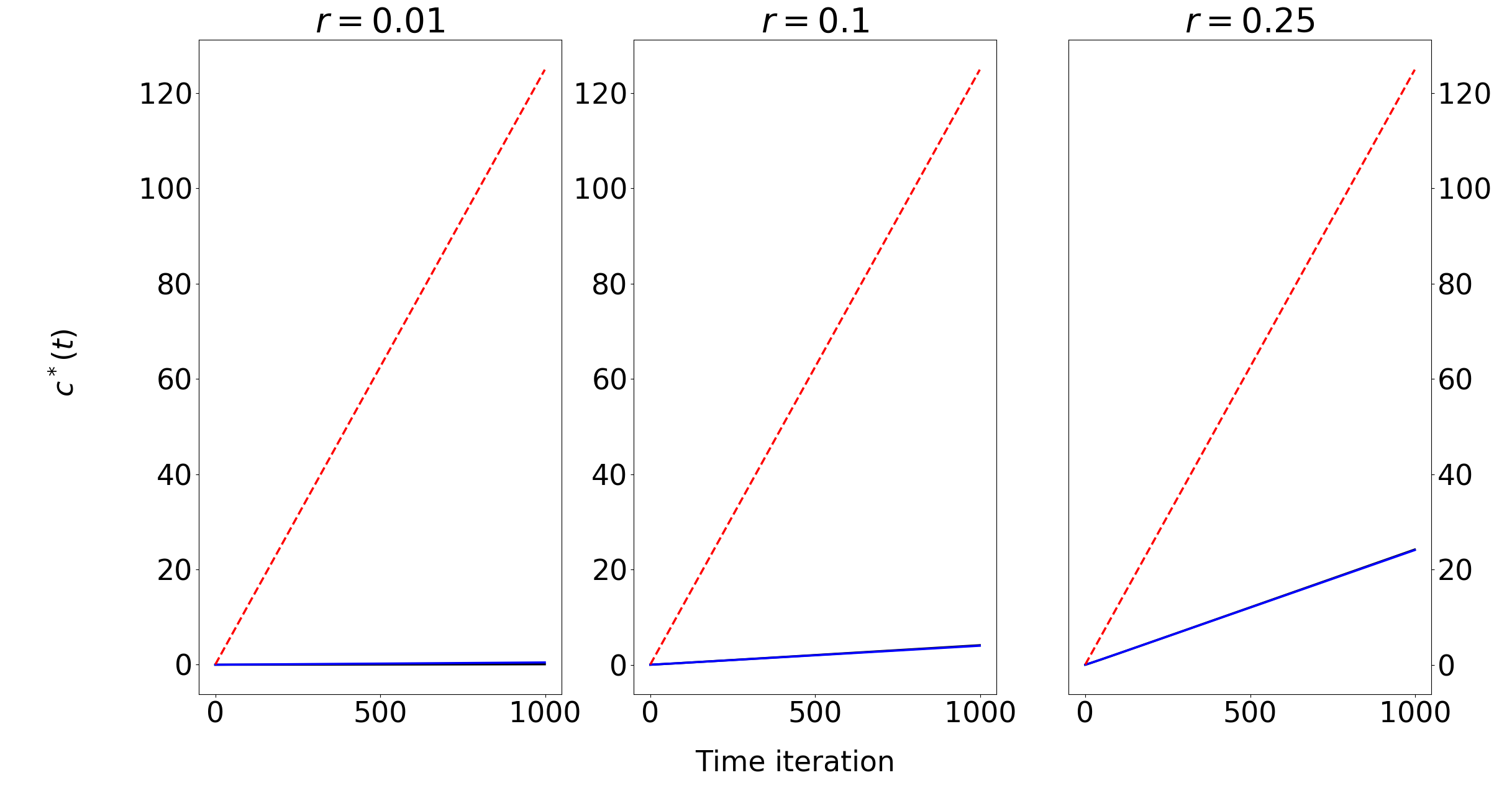}}
    \subfigure[$\|\phi(\boldsymbol{x}) - D\nabla u_P\cdot\boldsymbol{n}\|_{L^2(\partial\Omega_C)}$]{
    \includegraphics[width = 0.48\textwidth]{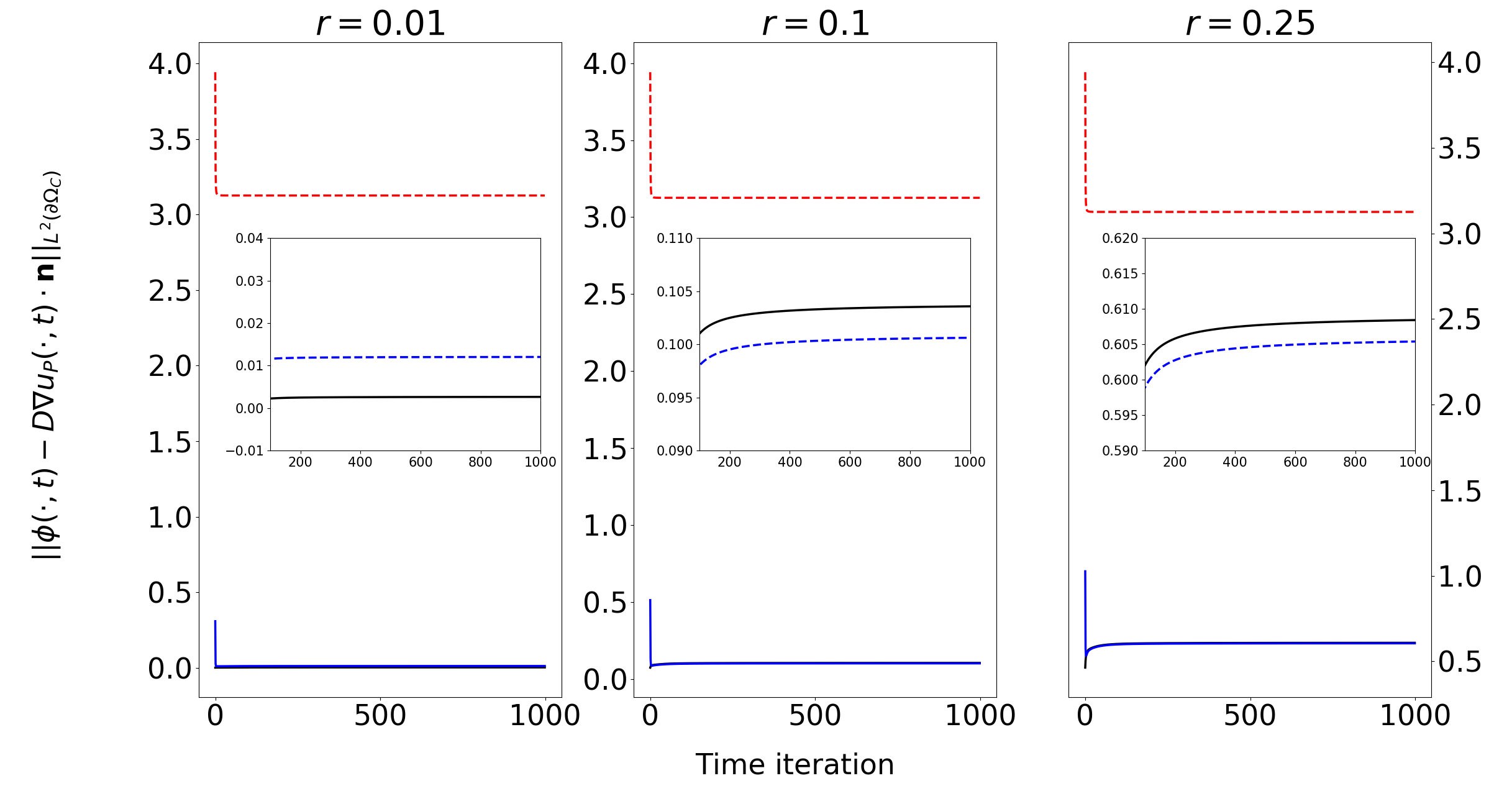}}
    \includegraphics[width = \textwidth]{3_curves_legend.png}
    \caption{Comparison of single- and multi-Dirac approach to spatial exclusion model for the polarized flux density with ratio $\rho=1$, i.e.  $\phi(\boldsymbol{x}) = 1+\sin(\theta)$. The simulations were made with a large diffusion coefficient $(D=30.0)$. The local $L^2-$ (Panel (a)) and $H^1-$norm difference (Panel (b)) between the solutions to the spatial exclusion model and the point source model with one (red dashed curve) and multiple Dirac points (black and blue solid curve), respectively. The $c^*(t)$ and the boundary flux difference at each time step are shown in Panel (c) and (d), respectively.}
    \label{Fig_1_Dirac_ratio_1_D_30}
\end{figure}

If $\rho$ in the predefined flux density is reduced, the inhomogeneity is less significant. We keep the same parameter values as in Table~\ref{tab:para_all} \textit{except for} modifying the value of $\displaystyle\rho$ to $0.01$, Figure~\ref{Fig_1_Dirac_ratio_001_D_1} shows that multi-Dirac approach is still favoured based on the local solution deviation between the spatial exclusion model and the point source model. Meanwhile, the difference between the multi-Dirac approach and the single-Dirac approach is less than in Figure~\ref{Fig_1_Dirac_ratio_1_D_1}, which verifies again the reduction of the homogeneity when $\displaystyle\rho$ decreases.
\begin{figure}[h!]
    \centering
    \subfigure[$\|u_S-u_P\|_{L^2(\Omega\setminus\Omega_C)}$]{
    \includegraphics[width = 0.48\textwidth]{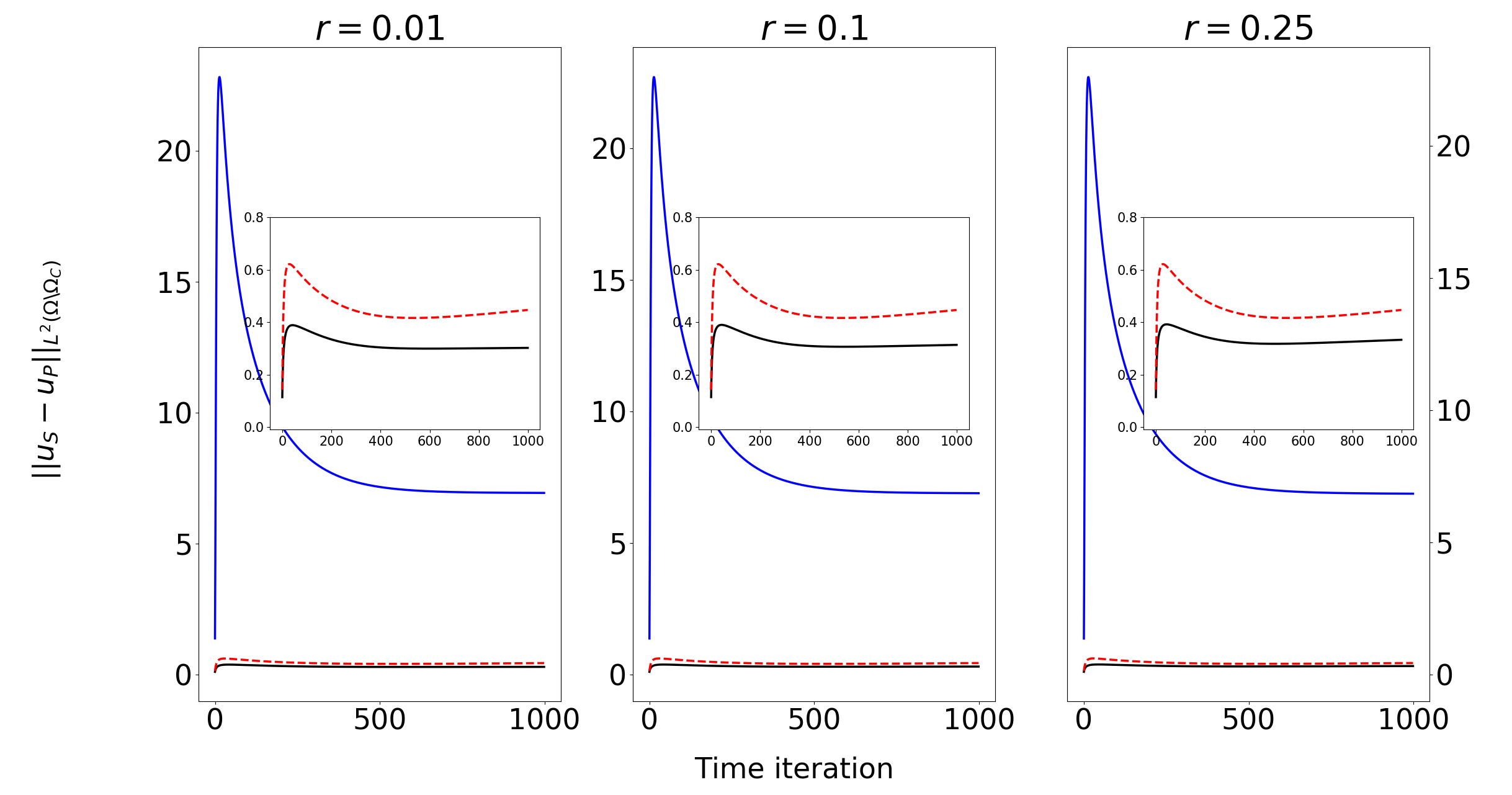}}
    \subfigure[$\|u_S-u_P\|_{H^1(\Omega\setminus\Omega_C)}$]{
    \includegraphics[width = 0.48\textwidth]{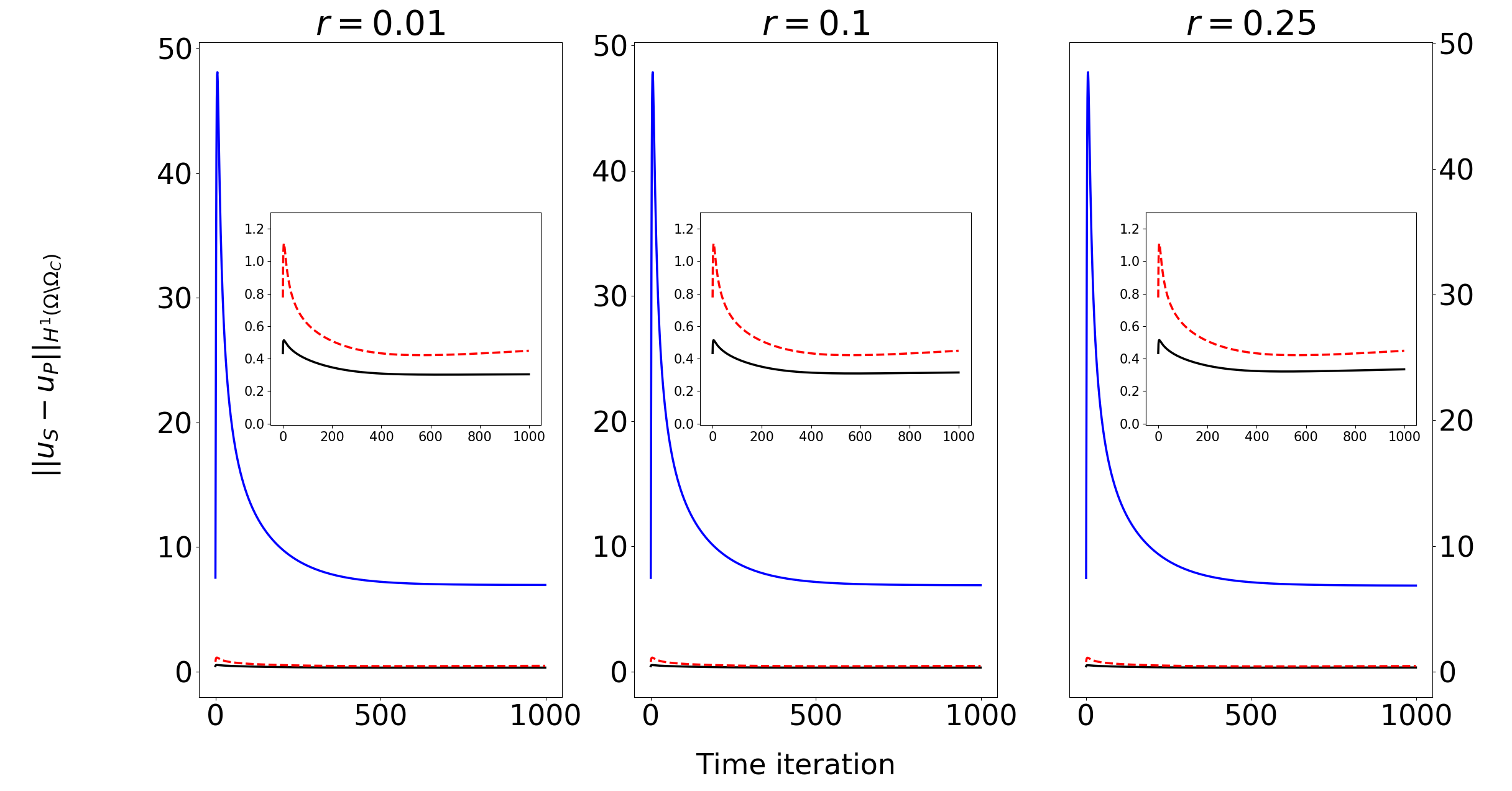}}
    \subfigure[$c^*(t)$]{
    \includegraphics[width = 0.48\textwidth]{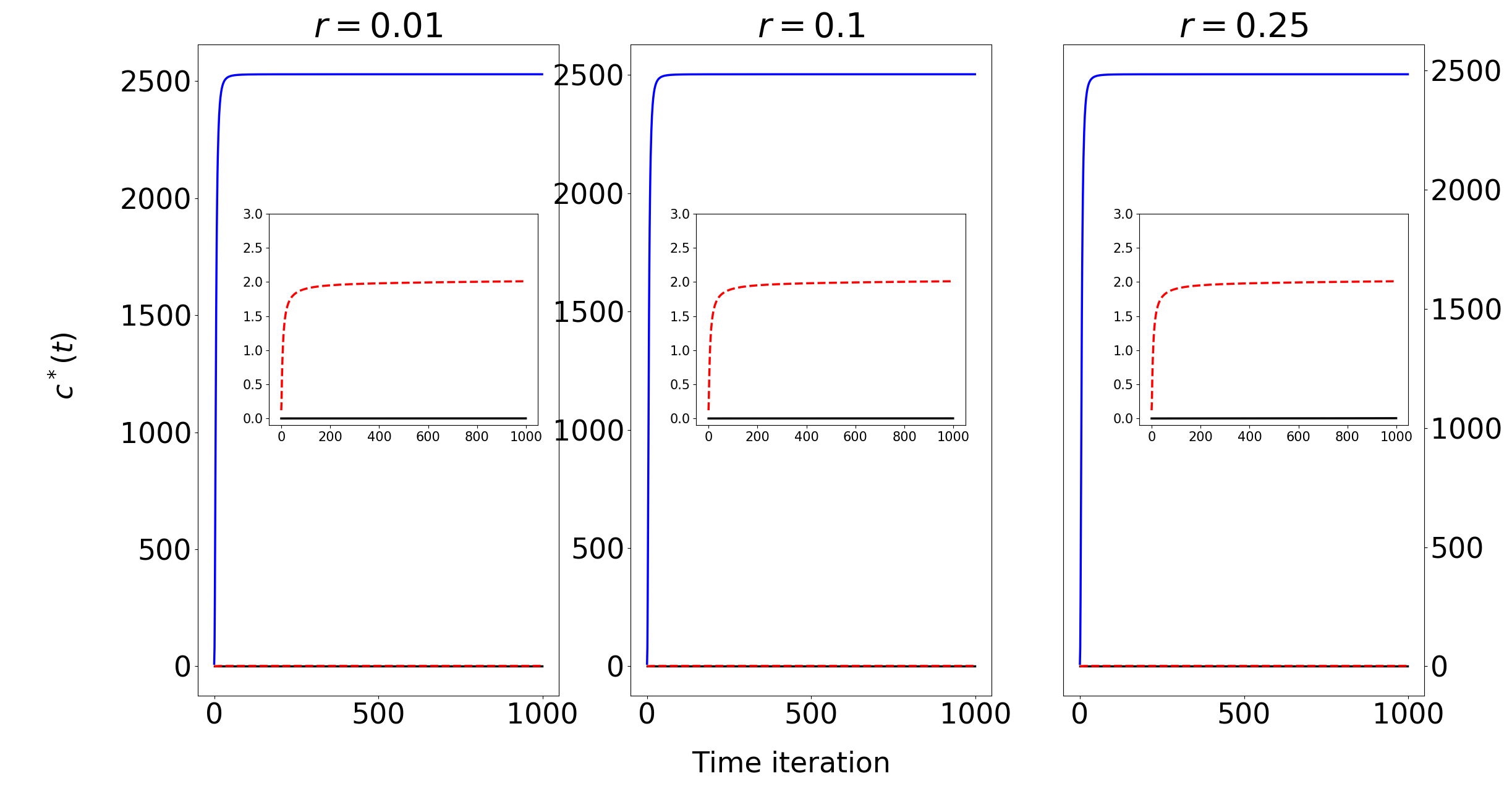}}
    \subfigure[$\|\phi(\boldsymbol{x}) - D\nabla u_P\cdot\boldsymbol{n}\|_{L^2(\partial\Omega_C})$]{
    \includegraphics[width = 0.48\textwidth]{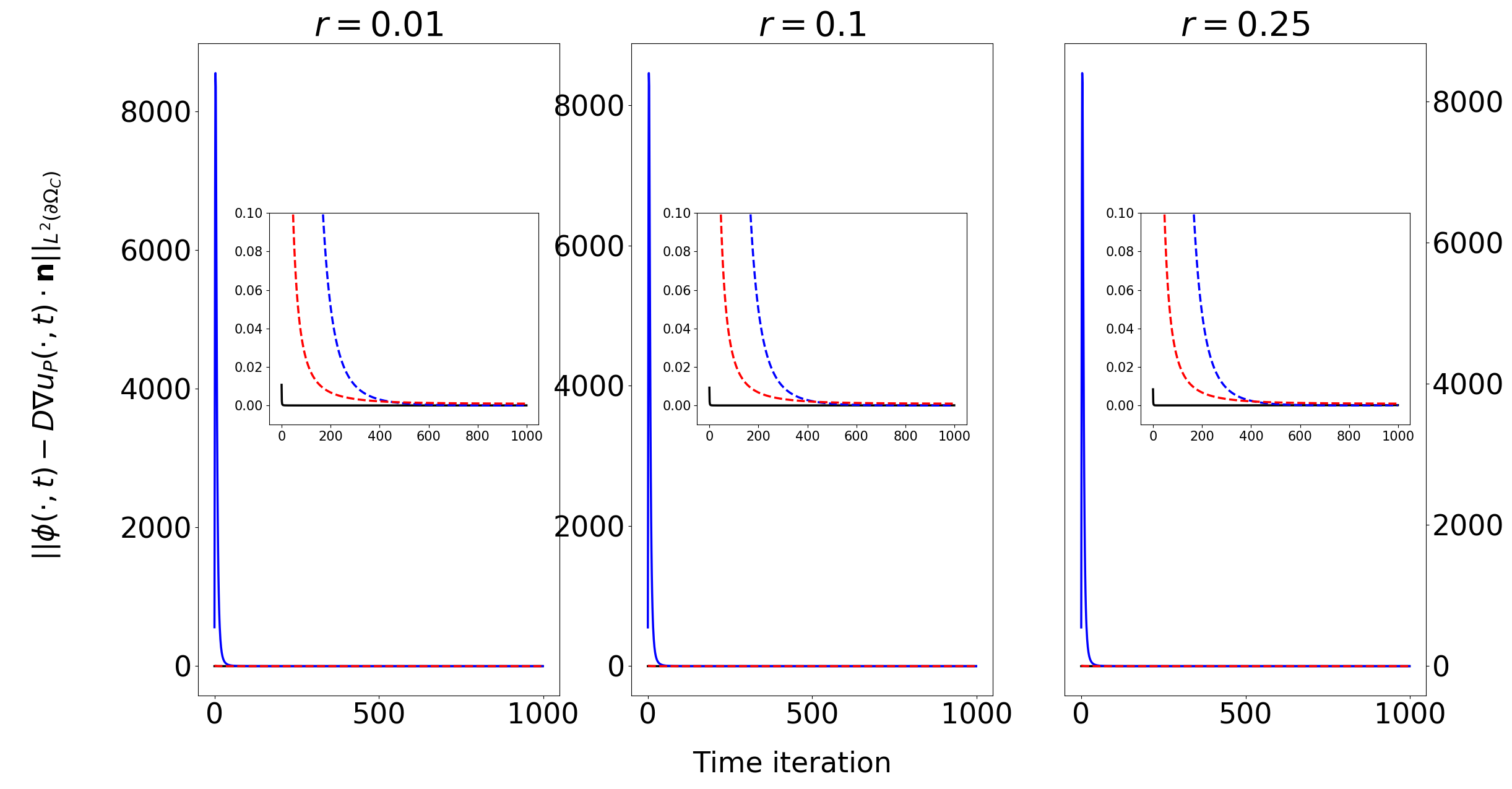}}
    \includegraphics[width = \textwidth]{3_curves_legend.png}
    \caption{Comparison of single- and multi-Dirac approach to spatial exclusion model for the polarized flux density with $\rho=0.01$, i.e. $\phi(\boldsymbol{x}) = 1+0.01\sin(\theta)$. The simulations were made with the rest of the parameters taken from Table~\ref{tab:para_all}. The local $L^2-$ (Panel (a)) and $H^1-$norm difference (Panel (b)) between the solutions to the spatial exclusion model and the point source model with one (red dashed curve) and multiple Dirac points (black and blue solid curve), respectively. The $c^*(t)$ and the boundary flux difference at each time step are shown in Panel (c) and (d), respectively.}
    \label{Fig_1_Dirac_ratio_001_D_1}
\end{figure}

\subsubsection{Axially Oriented Flux Distribution: $n=2$}
\noindent

In Figure~\ref{Fig_2_Dirac_ratio_1_D_1} we compare solutions of single-Dirac and multi-Dirac point source models, the latter computed with different methods, to the solution of the spatial exclusion model with the axially oriented flux distribution ($n=2$). We use the standard parameter values from Table~\ref{tab:para_all}. Similar to the observations from the case of $n=1$, as long as $r$ is small enough, the overall performance of the discrepancy between the two models by using the multi-Dirac approach is better than the single-Dirac approach. However, due to the large intensity of the Dirac point, explicit Green's function approach is required to have a trustworthy numerical solution, particularly when the flux over the cell boundary is computed; see Figure~\ref{Fig_2_Dirac_ratio_1_D_1}(c)-(d).

When we increase $n$ in the predefined flux density, the multi-Dirac approach is more favoured over the single-Dirac approach, using the same $r$ in the observed time domain, e.g. Figure~\ref{Fig_2_Dirac_ratio_1_D_1}(a) and~\ref{Fig_1_Dirac_ratio_1_D_1}(a) with $r=0.1$. This observation motivates the examination in further detail of the domain of effective application of the multi-Dirac approach.

\begin{figure}[h!]
    \centering
    \subfigure[$\|u_S-u_P\|_{L^2(\Omega\setminus\Omega_C)}$]{
    \includegraphics[width = 0.48\textwidth]{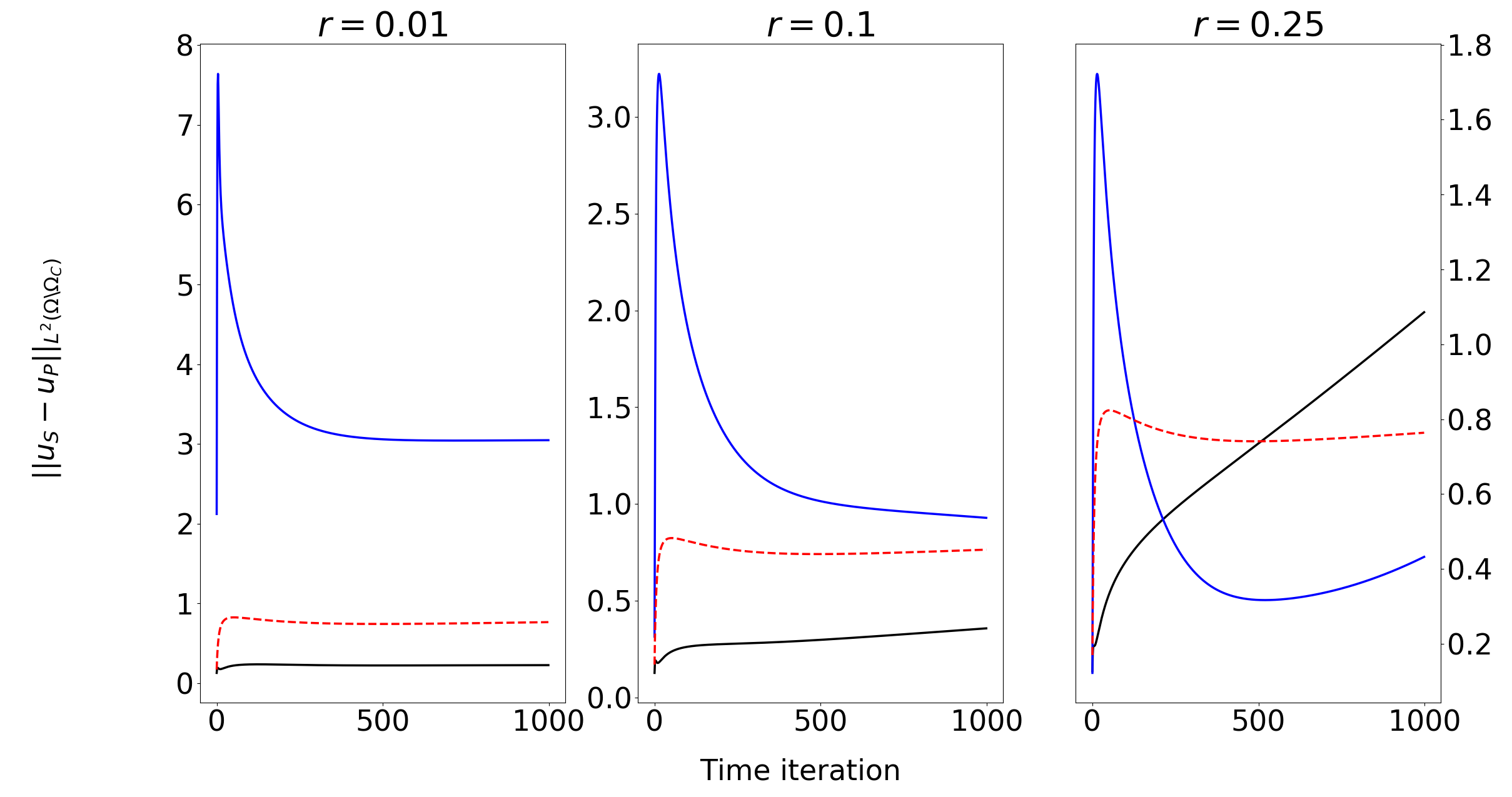}}
    \subfigure[$\|u_S-u_P\|_{H^1(\Omega\setminus\Omega_C)}$]{
    \includegraphics[width = 0.48\textwidth]{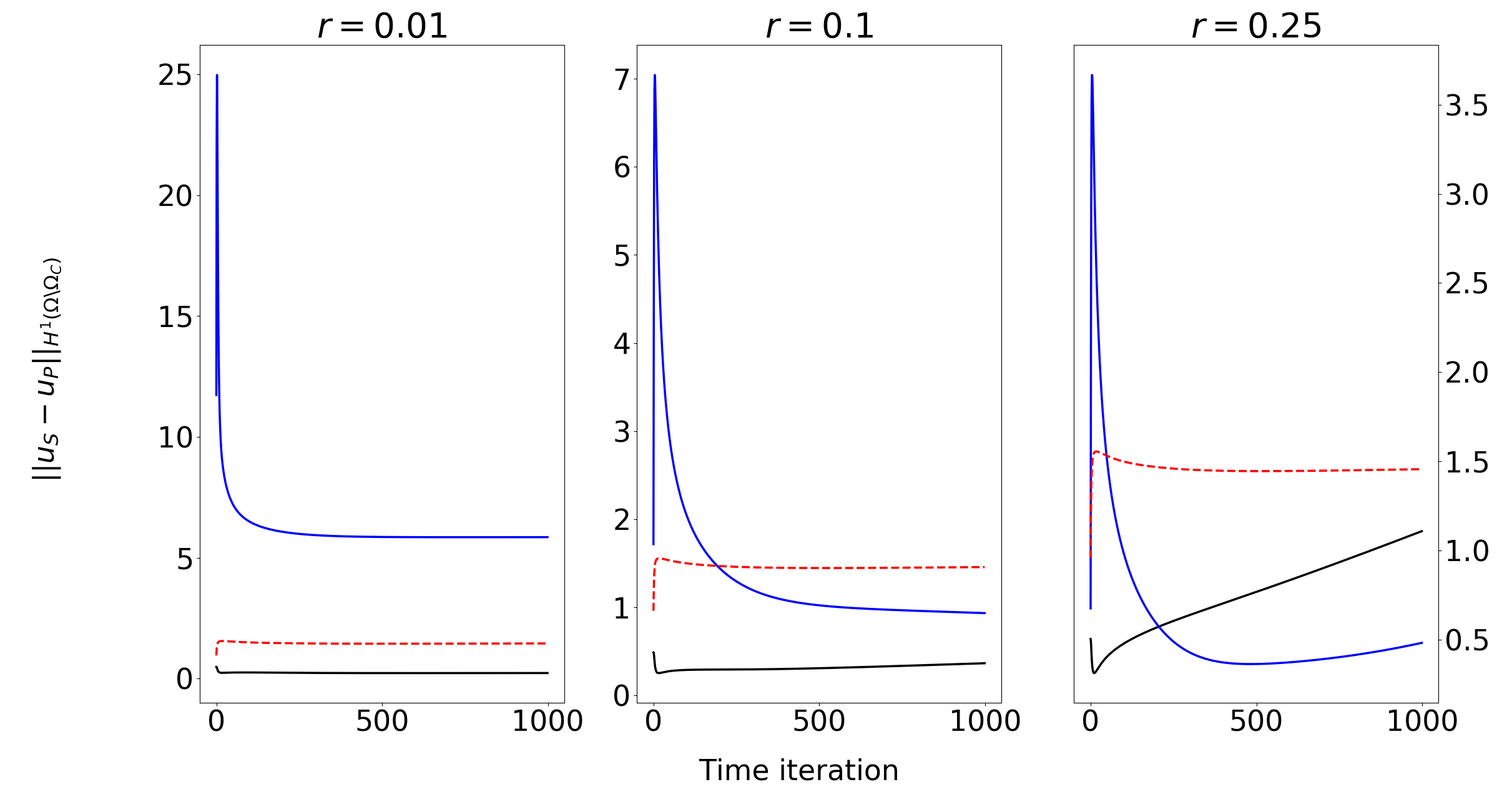}}
    \subfigure[$c^*(t)$]{
    \includegraphics[width = 0.48\textwidth]{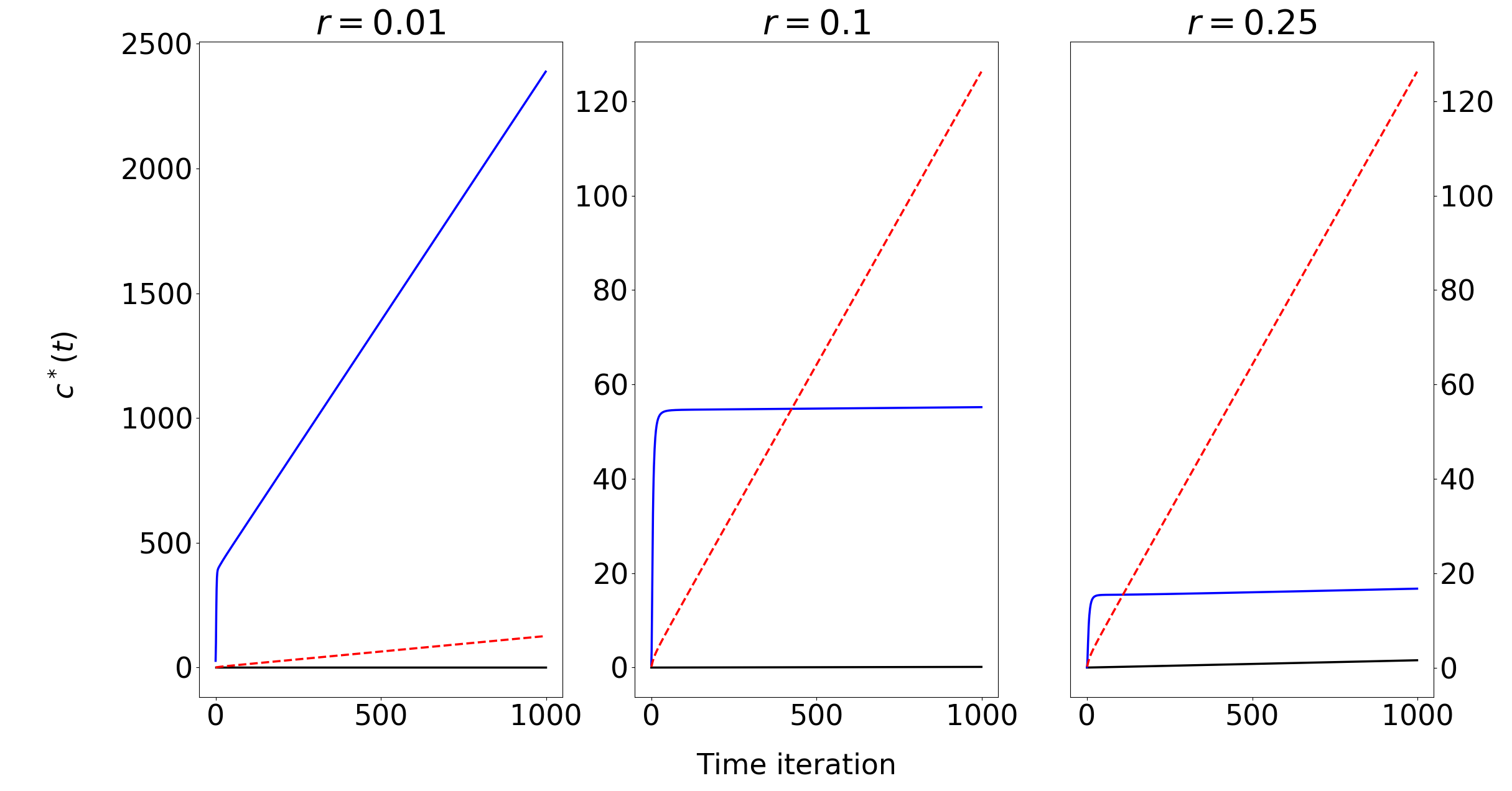}}
    \subfigure[$\|\phi(\boldsymbol{x}) - D\nabla u_P\cdot\boldsymbol{n}\|_{L^2(\partial\Omega_C)}$]{
    \includegraphics[width = 0.48\textwidth]{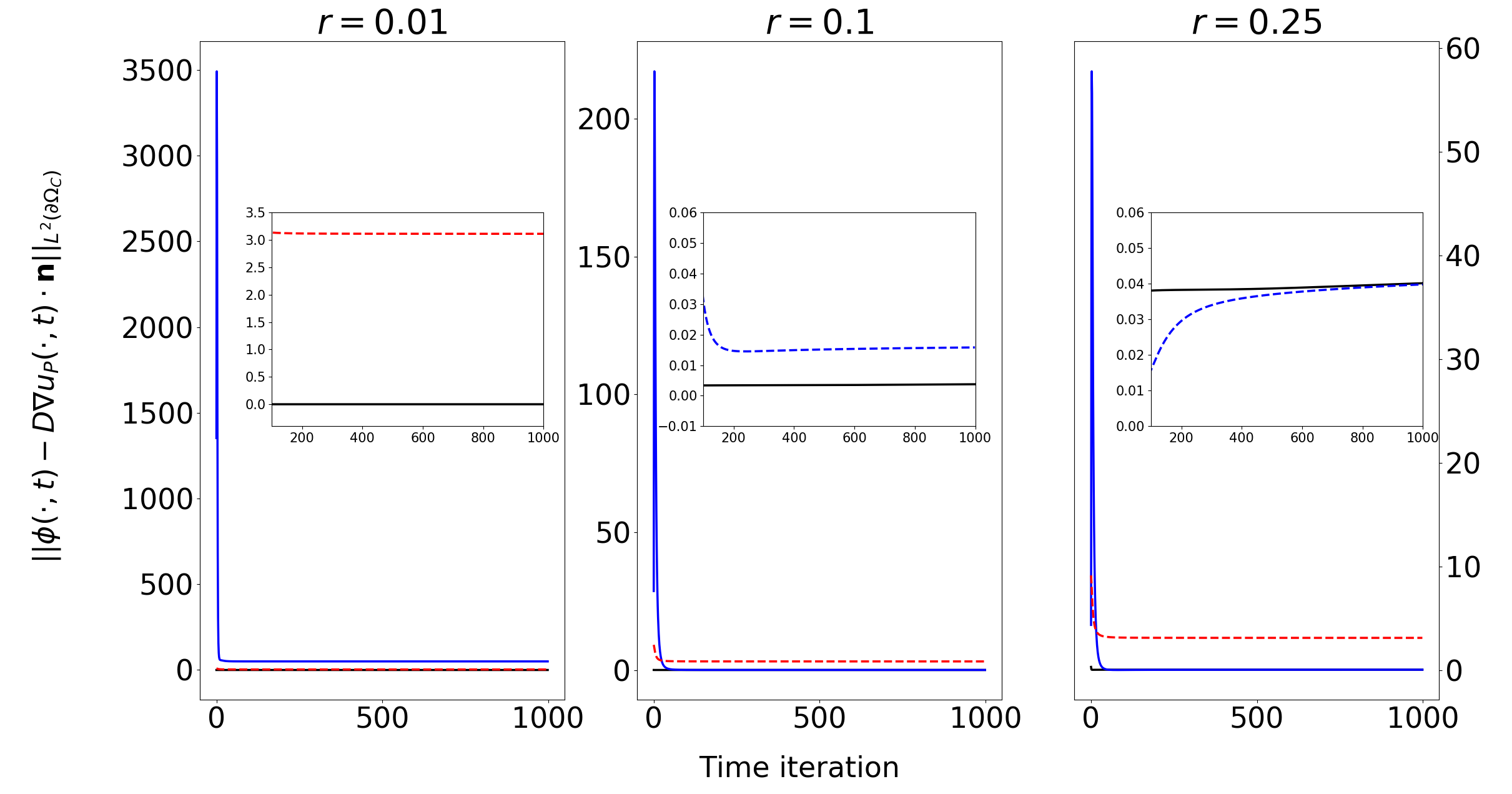}}
    \includegraphics[width = \textwidth]{3_curves_legend.png}
    \caption{Comparison of single- and multi-Dirac approach for the axially oriented flux density with ratio $\rho=1$, so $\phi(\boldsymbol{x}) = 1+\sin(2\theta)$. Simulations are made with the standard parameter values from Table~\ref{tab:para_all}. The local $L^2-$ (Panel (a)) and $H^1-$norm difference (Panel (b)) between the solutions to the spatial exclusion model and the point source model with one (red dashed curve) and multiple Dirac points (black and blue solid curve), respectively. The $c^*(t)$ and the boundary flux difference at each time step are shown in Panel (c) and (d), respectively.}
    \label{Fig_2_Dirac_ratio_1_D_1}
\end{figure}

\subsection{Application Domain of Multi-Dirac Approach}\label{Sec_MC_n_2}
\noindent
In the previous section, the results were obtained with specific and extreme parameter values, particularly the value of $\rho$ and $D$. In the selected cases, the simulation results indicate that the multi-Dirac approach gives better quality approximation than a single-Dirac approach. The question whether this property persists for all parameter combinations, is investigated now by means of Monte Carlo simulations with the diffusion constant $D$ and ratio $\rho$ as inputs. For $n=1$ and $n=2$ we take uniform distributions for both parameters on their order of magnitude:

\begin{itemize}
    \item $\log_{10}(D)\sim U(-3, 1.5)$;
    \item $\log_{10}(\rho)\sim U(-3,0)$.
\end{itemize}
The remaining parameters are fixed as in Table~\ref{tab:para_all}. To neglect the impact of the locations of the off-centre Dirac points, $r = 0.01$ is utilized. The distance in spatial $L^2$-norm between the numerical solution to the spatial exclusion model and the solution of the point source model with multi-Dirac approach and with single-Dirac approach is computed at the time steps of simulation. These two $L^2$-deviation curves are compared. The response of the Monte Carlo simulation is a label,  specified in Table~\ref{Tbl_MC_response} in more detail, representing to which approach in the point source model better approximates the spatial exclusion model in the time range $t\in(10, 40)$.
\begin{table}[h!]
    \centering
    \caption{The label description of the Monte Carlo simulations in Section~\ref{Sec_MC_n_2}.}
    \begin{tabular}{m{2cm}<{\centering}m{12cm}}
    \hline 
    \\[-1em]
    \textbf{Label}  & \textbf{Description}  \\
    \hline
    \\[-1em]
    $-1$ & One of the intensity functions $\widetilde{\Phi}_D(t)$ and $\widetilde{\Phi}_C(t)$ is too large for the numerical scheme, i.e. the entire numerical simulation fails. \\  
    \hline
    \\[-1em]
    $0$  &  The $L^2-$norm deviation between the multi-Dirac approach in the point source model and the spatial exclusion approach is \textit{smaller} than the one of the single-Dirac approach in the entire observation time domain.\\
    \hline
    \end{tabular}
    \label{Tbl_MC_response}
\end{table}

\subsubsection{Polarized Flux Distribution: $n=1$}
\noindent
We collected {$1213$} samples and the label obtained was either $-1$ or $0$; see Figure~\ref{Fig_n_1_MC_scatter}. This means that, as long as the numerical scheme does not fail, the multi-Dirac approach always performs better than the single-Dirac approach, in the perspective of the $L^2$-deviation of the solutions to the spatial exclusion model and the point source model. Moreover, the labels are clustered in two clusters that are separated by  $\log_{10}(D) = -2$, regardless of the value of $\rho$. It is mainly because a very small $D$ results in a huge intensity of the Dirac point, which causes numerical failure. 
\begin{figure}[h!]
    \centering
    \includegraphics[width = 0.75\textwidth]{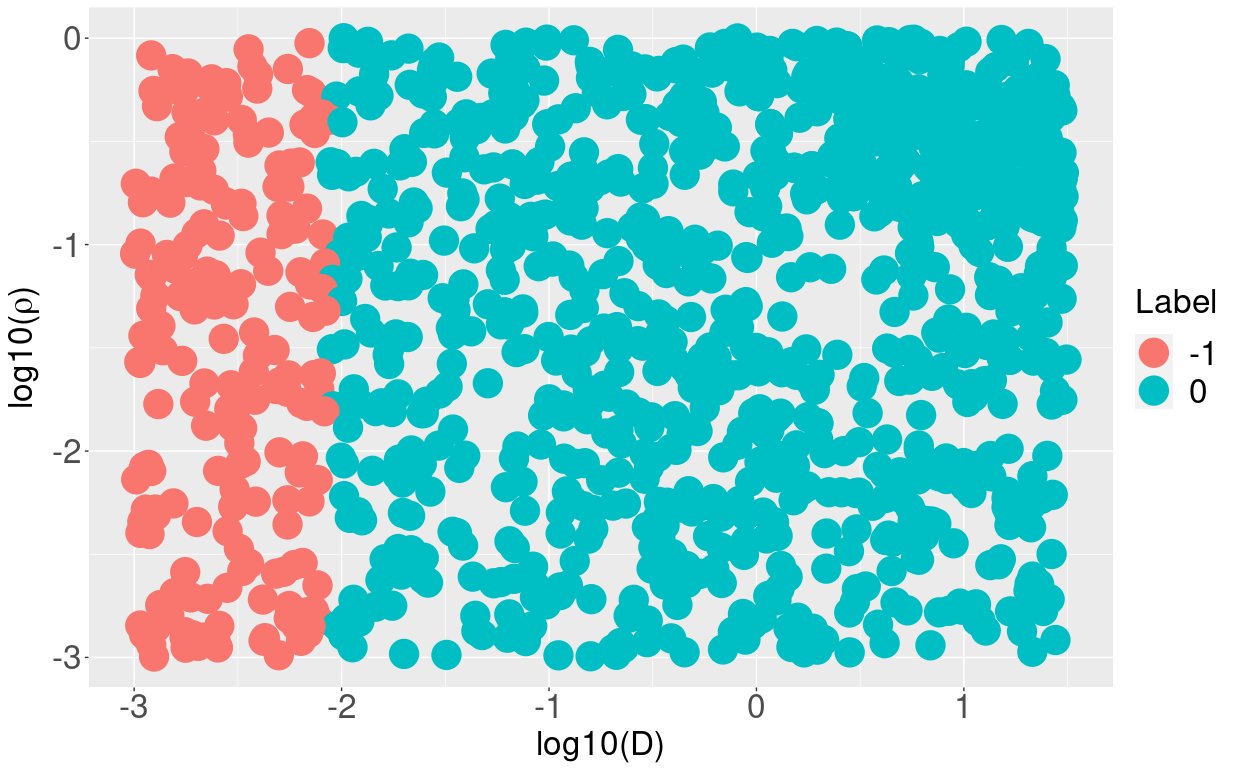}
    \caption{The scatter plot of $\log_{10}(\rho)$ and $\log_{10}(D)$. The color of the scatters represent the label shown in Table~\ref{Tbl_MC_response}.} 
    \label{Fig_n_1_MC_scatter}
\end{figure}

\subsubsection{Axially Oriented Flux Distribution: $n=2$}
\noindent
Figure~\ref{Fig_n_2_MC_scatter_logD_logratio} shows the scatter plot between $\log_{10}(D)$ and $\log_{10}(\rho)$ for the response labels for the axially oriented flux distribution (case $n=2$), with the labels marked by different colours. We collected {$1117$} samples in the dataset. The labels are again clearly divided into two connected clusters. The line $\log_{10}(D)=-2$ divides the clusters again as in the case $n=1$.

Similar to the issue in case $n=1$, a too-small $D$ results in a numerical failure due to the enormous value of the intensities computed by Equation~\eqref{Eq_tripole_sol}, regardless of the value of $\rho$.

\begin{figure}[h!]
    \centering
    \includegraphics[width = 0.75\textwidth]{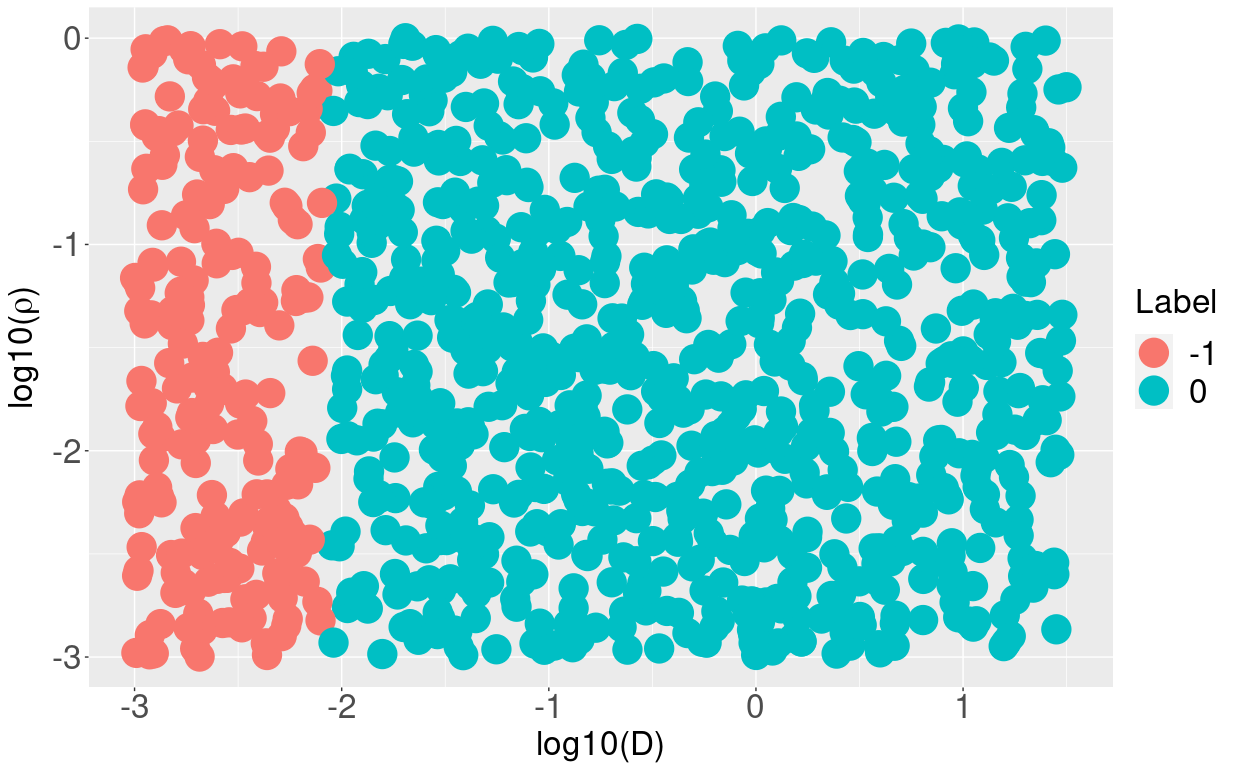}
    \caption{Scatter plot between $\log_{10}(D)$ and $\log_{10}(\rho)$. Here, the colours of the scatters represent the labels defined in Table~\ref{Tbl_MC_response}.}
    \label{Fig_n_2_MC_scatter_logD_logratio}
\end{figure}

\subsection{Numerical Scheme of Multi-Dirac Approach}\label{Sec_Mesh_Comparison}
\noindent

An important advantage of a point source model over a spatial exclusion model is that in the former a less complicated mesh is needed. In the latter, a fine mesh is required around the cell boundaries to handle well the diffusive spreading of released compounds. In the direct approach, a suitably fine mesh is still needed around the Dirac points, especially when such point has large intensity. In Section~\ref{Sec_Results} it was shown that one of the advantages of using the explicit Green's function approach is that the singularity from the Dirac point can be converted to the outer domain boundary. Hence, the numerical scheme can be stabilized compared to the direct approach. A coarser mesh is thus expected to provide a similar quality of approximation. We shall now investigate these claims further through numerical simulations. 

To compare the deviation when using different mesh size in the point source model with the explit Green's function approach and direct approach, we take the solution to the spatial exclusion model in a fine mesh as the reference solution, then the relative error in certain norm is defined by 
\begin{equation}
    \label{Eq_rela_error_mesh}
    \mathrm{r.e}(t) = \frac{\|u_S(t) - u_P(t)\|_{\Omega\setminus\Omega_C}}{\|u_S(t)\|_{\Omega\setminus\Omega_C}},
\end{equation}
where $u_S$ is the solution to the spatial exclusion model on the fine mesh. 

Figure~\ref{Fig_diff_mesh} presents the relative error defined in Equation~\ref{Eq_rela_error_mesh}, using the explicit Green's function approach and direct approach to solve $\mathrm{(BVP_P)}$, in different mesh size. Due to the large value of the intensity of the Dirac points, particularly in the direct approach, the relative error is large in the first few time steps. Hence, in the main figure, we show the time iterations from timestep $100$, while the entire time series of the relative error is shown in the embedded figures. Generally, using the same mesh structure, solving $\mathrm{(BVP_P)}$ with the explicit Green's function approach results in a less discrepancy to the solution to $\mathrm{(BVP_S)}$. Furthermore, there is a significant difference in relative error between the two approaches: for the explicit Green's function approach, using a coarse mesh only increases around $5\%$ relative error in the $L^2-$norm at the end of the simulation, whereas the increase of direct approach is around $25\%$. Regarding the $H^1-$norm, the difference in relative error between the mesh size is smaller in the explicit Green's function approach. Hence, this approach has the advantage of losing less numerical accuracy while using a coarse mesh, which can potentially increase the computational efficiency. 
\begin{figure}[h!]
    \centering
    \subfigure[Relative error of $L^2-$norm]{
    \includegraphics[width=0.48\textwidth]{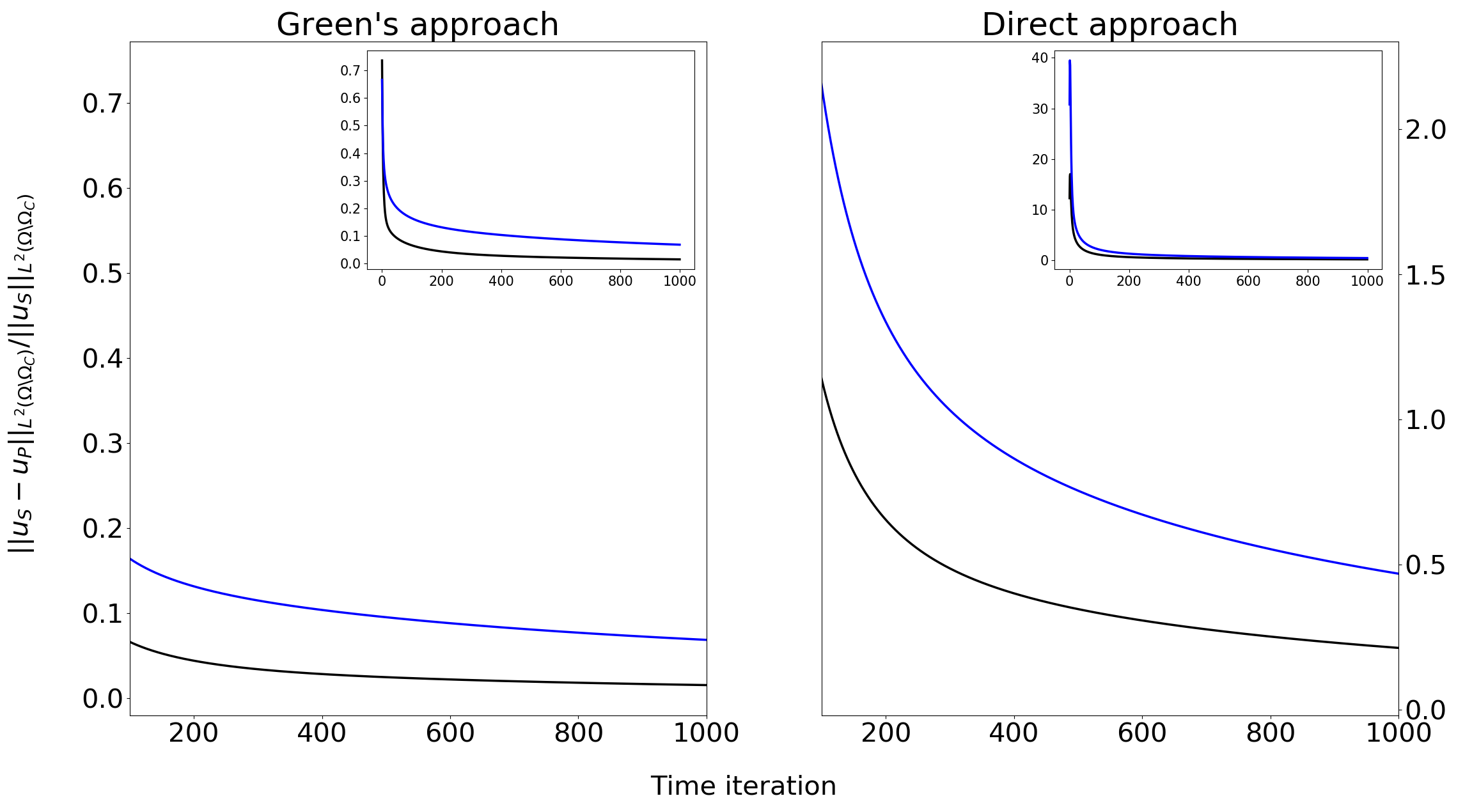}}
    \subfigure[Relative error of $H^1-$norm]{
    \includegraphics[width=0.48\textwidth]{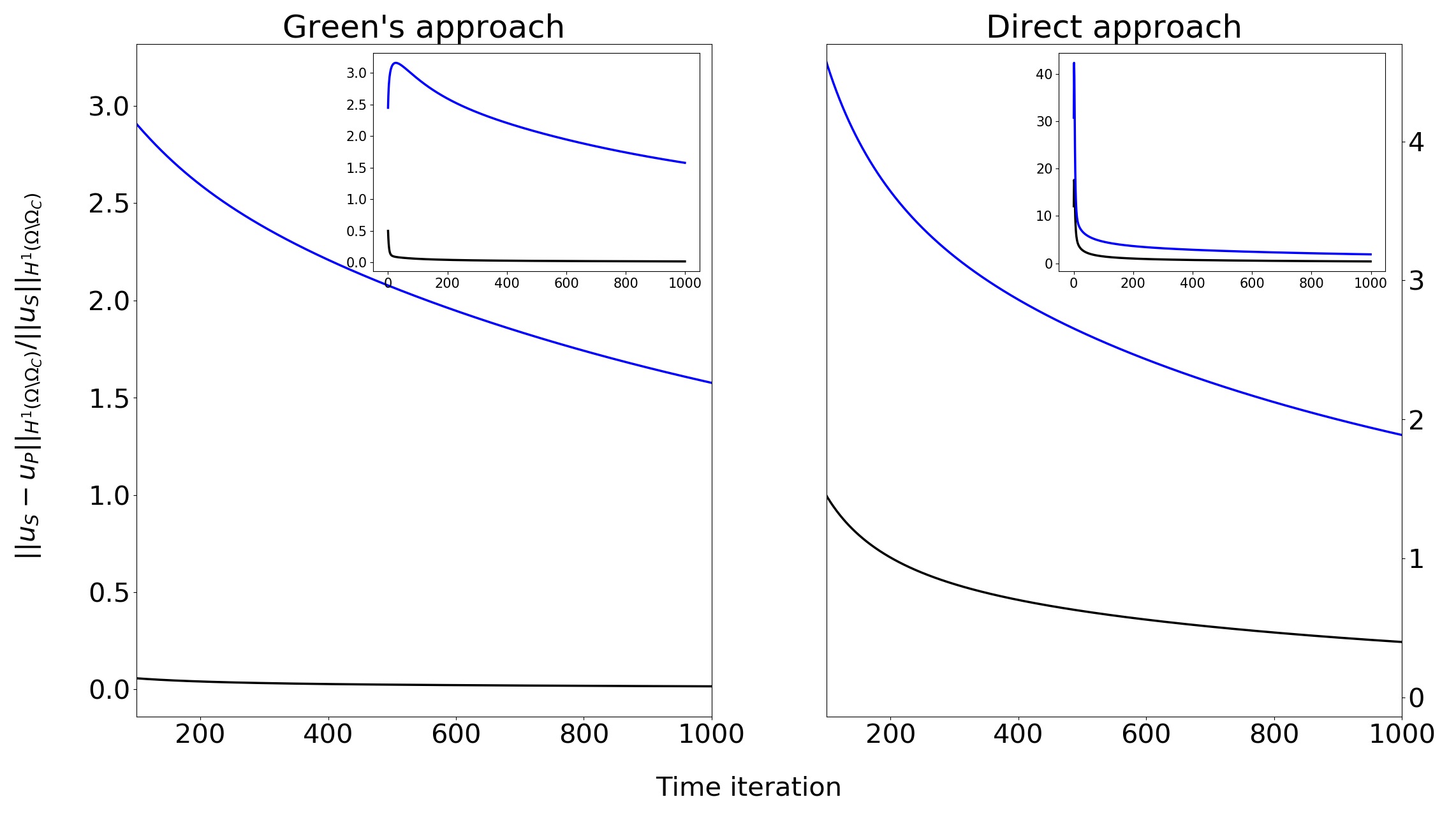}}
    \includegraphics[width = 0.6\textwidth]{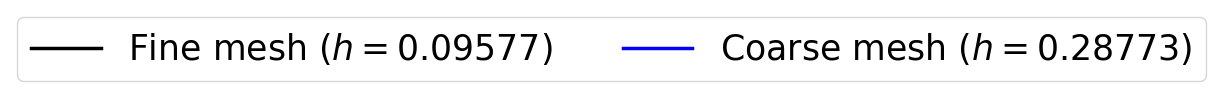}
    \caption{The relative error of $L^2-$ and $H^1-$ norm between the spatial exclusion and point source model, are shown in Panel (a) and (b), respectively. The point source model is solved by the explicit Green's function approach and the direct approach in a fine ($h = 0.09577$, black curve) and coarse ($h = 0.28773$, blue curve) mesh. The parameter values are the standard ones from Table~\ref{tab:para_all}.}
    \label{Fig_diff_mesh}
\end{figure}

\section{Conclusions and Discussion}\label{Sec_Conclu}
\noindent
In this study, we investigated how to use the point source model to approximate the spatial exclusion model for a single cell when the predefined flux density over the cell boundary is inhomogeneous, given by a specific sinusoidal function. This is a first explorative step towards settings in which there are more general flux inhomogeneities and more general cell shapes, and where there are many (and moving) cells. A point source model is expected to be computationally more efficient than a spatial exclusion model with FEM, which requires frequent remeshing in the latter setting. Forthcoming publications on results in these directions are in preparation. 

We proposed using several Dirac-delta point sources to represent the cell, in a specific configuration and with well-chosen intensities, such that the inhomogeneous flux over the cell boundary can be approximated. To compute the intensity of the point sources, we enforced that the extreme points of the prescribed flux density function agree with those of a modified flux expression for the point source model. We noticed numerically the convergence of this modified flux expression to the predefined flux density if $t\rightarrow+\infty$ and $r\rightarrow0^+$, then we also proved it in the analytical perspective in Appendix \ref{App_proof_Dirac} for $n=1$ and $n=2$. For the general case of higher $n$, we expect the same phenomenon to occur. An analytical proof of this result, which is not the main objective of this study, will be provided in a theoretically oriented article.

When the point source model was solved with the intensity of the Dirac points given by System~\eqref{eq:conditions for tilde_Phi}, a classical FEM approach could not deal with the large values of these intensities. Thus, the numerical solution is unstable. To solve this issue, we developed the explicit Green's function approach, which splits the numerical procedure into computing integrals that include the explicit Green's function (fundamental solution to the diffusion equation) on $\R^2$ and the computation of an adjustment that can be obtained  by solving a `regular' diffusion equation with prescribed boundary forcing, e.g. using standard FEM. 

There are several foreseen advantages of this approach:
\begin{enumerate}
    \item[(1)] The singularity that exists in $\hat{u}$ is now transferred to the computational domain boundary which is usually far away from the Dirac point. In other words, the smoothness inside the domain is guaranteed and hence, a fine mesh is not strictly required.
    \item[(2)] There is no need to reduce the time step significantly to maintain the stability of the numerical solutions.
    \item[(3)] Explicit Green's function approach is easier to implement in the case where there are many moving cells in the form of Dirac points, in particular, the cells are far away from the domain boundary.
    \item[(4)] The adjustment term does not need to be computed from the beginning of the simulation since it is very small compared with the fundamental solution. By doing this, the computational efficiency is improved.
\end{enumerate}
However, one possible drawback is the increasing computational expense when one implements the computation of the explicit Green's function partial solution $\hat{u}$ of Equation~\eqref{Eq_Green} for multiple Dirac points. Furthermore, this approach can only {\it extend} the range of the values for the intensity for which a numerically stable solution can be obtained, but it cannot completely resolve the numerical instability issue. It cannot deal still with the extremely large values. 

Further numerical simulation showed that the multi-Dirac approach performs better in terms of solution difference metrics between the two models, if $r$ is sufficiently small but it cannot be too small to result in a numerical failure. There are several significant parameters which influence the performance of the multi-Dirac approach, such as $n$, $r$, $\rho$ and $D$. In this manuscript, we investigated thoroughly the cases when $n=1$ and $n=2$. In both cases, as long as there is no numerical scheme failure, the multi-Dirac approach is always favoured in the point source model to obtain a more consistent numerical solution compared to the spatial exclusion model. 

In both $n=1$ and $n=2$, Monte Carlo simulations show that when $\log_{10}(D)<-2$ (i.e. $D<0.01$), regardless of the value of the ratio, the numerical scheme failed due to the enormous intensity of the Dirac points according to our approach. In other words, one can already tell whether the simulation will run into numerical issues or not based on the value of $D$. To tackle this issue, rescaling the PDE would be a possible solution such that the new value of $D$ is enlarged to a safe region. 

In \citet{Peng2023}, we acknowledged two types of error between the spatial exclusion model and the point source model, one of which is a \textit{systematic time delay}. That is, -- for a constant intensity for a single-Dirac point in a point source model -- if the diffusion coefficient is not large enough, then the compounds need a few time steps to reach the cell boundary before diffusing to the extracellular environment. Hence, this results in a time delay between the solution to the spatial exclusion model (with constant flux density) and the solution to the point source model. Here, we examined using time-dependent flux distributions and a multiple Dirac points approach. It seems that this systematic time delay is substantially reduced by enforcing that the flux density over the cell boundary in the point source model, resulting from the time-varying intensities at the Dirac points, to be the same in the time domain at the extreme point of the prescribed flux density.
\vskip 2mm

In summary, in this study, we focused on how to reproduce the inhomogeneous boundary condition of the cell in the spatial exclusion model by using multiple Dirac points as sink or source in the point source model. Depending on the parameter values, one can decide in advance whether it is necessary to use the multi-Dirac or single-Dirac approach. To reduce the numerical failure due to the large value of the intensity of Dirac points, we derived the explicit Green's function approach. However, we bare in mind that so far, for a very small diffusion constant, the numerical scheme will still fail.

%%% Reference
\newpage
\bibliographystyle{abbrvnat}
\bibliography{main}

%%% Appendix
\newpage
\begin{appendix}
\section*{Appendices}\label{Sec_Appendix}
\section{Location of extremes of $\hat{\phi}$ that are fixed for all time}\label{App_proof_1_Dirac_extremum}
\label{app:same location extrema}

\noindent Recall that $\tilde{\phi}_P(\boldsymbol{x},t)$ represented the flux density over the (virtual) cell boundary $\partial\Omega_C$ in the point source model in de bounded domain $\Omega$, when the intensities at the Dirac points $\boldsymbol{x}^{(i)}$ were taken to be $\tilde{\Phi}_i(t)$ ($i=0,1,\dots, N$). In turn, $\tilde{\phi}_P(\boldsymbol{x},t)$ was approximated by the explicitly computable flux density over $\partial\Omega_C$ given by the point source model in $\R^2$. That is, (see Expression~\eqref{eq:expresion tilde Phi}):
\begin{equation}
    \hat{\phi}(\boldsymbol{x}_\theta,t) := \sum_{i=0}^N \frac{\widetilde{\Phi}_i(t)}{2\pi R}  \frac{(\boldsymbol{x}_\theta - \boldsymbol{x}_C)\cdot(\boldsymbol{x}_\theta-\boldsymbol{x}^{(i)})}{\|\boldsymbol{x}_\theta - \boldsymbol{x}^{(i)}\|^2}   \exp\left\{-\frac{\|\boldsymbol{x}_\theta - \boldsymbol{x}^{(i)}\|^2}{4Dt}\right\}.
    \tag{\ref{eq:expresion tilde Phi}}
\end{equation}

\begin{lemma}
    For $N+1$ Dirac points $\boldsymbol{x}^{(i)}$, $i=0,1,\dots,N$, located in $\Omega_C$ with $\boldsymbol{x}^{(0)}$ being the centre point $\boldsymbol{x}_C$ and for generic intensities $\tilde{\Phi}_i\neq 0$, the approximate flux density $\tilde{\phi}(\boldsymbol{x}_\theta,t)$ can have critical points located at a fixed position for all time $t>0$ if and only if 
    \begin{equation}\label{eq:condition fixed extrema}
        \frac{\partial\boldsymbol{x}_\theta}{\partial\theta}\cdot \bigl(\boldsymbol{x}_\theta -\boldsymbol{x}^{(i)}\bigr)=0,\quad \mbox{for all}\ i=0,1,\dots, N.
    \end{equation}
\end{lemma}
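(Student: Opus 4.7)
The plan is to compute $\partial_\theta \hat{\phi}(\boldsymbol{x}_\theta, t)$ explicitly and exploit a factorisation of each summand that separates a purely geometric factor, depending on the configuration of Dirac points but not on $t$, from a time-dependent one. Introducing the shorthand $a_i(\theta) := (\boldsymbol{x}_\theta - \boldsymbol{x}_C)\cdot(\boldsymbol{x}_\theta - \boldsymbol{x}^{(i)})$ and $d_i^2(\theta) := \|\boldsymbol{x}_\theta - \boldsymbol{x}^{(i)}\|^2$, each summand of Equation~\eqref{eq:expresion tilde Phi} has the form $\frac{\widetilde{\Phi}_i(t)}{2\pi R}\,\frac{a_i(\theta)}{d_i^2(\theta)}\,\exp\bigl(-d_i^2(\theta)/(4Dt)\bigr)$. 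The geometric input driving the proof is that $\boldsymbol{x}_\theta - \boldsymbol{x}_C = R(\cos\theta, \sin\theta)$ is orthogonal to $\partial_\theta \boldsymbol{x}_\theta = R(-\sin\theta, \cos\theta)$, whence
\begin{equation*}
  \partial_\theta a_i(\theta) \;=\; q_i(\theta), \qquad \partial_\theta d_i^2(\theta) \;=\; 2\, q_i(\theta), \qquad\text{with}\qquad q_i(\theta) \;:=\; \partial_\theta \boldsymbol{x}_\theta \cdot (\boldsymbol{x}_\theta - \boldsymbol{x}^{(i)}).
\end{equation*}
In particular $q_0 \equiv 0$, since $\boldsymbol{x}^{(0)} = \boldsymbol{x}_C$, so the centre Dirac point automatically satisfies condition~\eqref{eq:condition fixed extrema} and contributes nothing to $\partial_\theta \hat{\phi}$.

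Applying the product rule to each summand with $i \geq 1$ then gives $\partial_\theta \hat{\phi}(\boldsymbol{x}_\theta, t) = \sum_{i=1}^N \frac{\widetilde{\Phi}_i(t)}{2\pi R}\, q_i(\theta)\, h_i(\theta, t)$, where
\begin{equation*}
  h_i(\theta, t) \;=\; \exp\!\left(-\frac{d_i^2(\theta)}{4Dt}\right)\!\left( \frac{1}{d_i^2(\theta)} \;-\; \frac{2\, a_i(\theta)}{d_i^4(\theta)} \;-\; \frac{a_i(\theta)}{2Dt\, d_i^2(\theta)} \right).
\end{equation*}
The sufficient (``if'') direction is then immediate: if $q_i(\theta^*) = 0$ for every $i$, each summand vanishes identically in $t$, so $\theta^*$ is a critical point of $\theta \mapsto \hat{\phi}(\boldsymbol{x}_\theta, t)$ for all $t > 0$.

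For the necessary (``only if'') direction, I would proceed by contrapositive. Fix $\theta^*$ and assume $\partial_\theta \hat{\phi}(\boldsymbol{x}_{\theta^*}, t) = 0$ for every $t > 0$, i.e.\ $\sum_{i=1}^N \widetilde{\Phi}_i(t)\, q_i(\theta^*)\, h_i(\theta^*, t) \equiv 0$ on $(0, \infty)$. Setting $s := 1/(4Dt)$ rewrites $h_i(\theta^*, t) = e^{-d_i^2(\theta^*)\, s}\bigl(\alpha_i + \beta_i s\bigr)$ for explicit constants $\alpha_i = 1/d_i^2 - 2a_i/d_i^4$ and $\beta_i = -2 a_i/d_i^2$ evaluated at $\theta^*$; these cannot be simultaneously zero, because $\beta_i = 0$ forces $a_i = 0$, which in turn leaves $\alpha_i = 1/d_i^2(\theta^*) \neq 0$ (note $d_i(\theta^*) > 0$ since $\boldsymbol{x}^{(i)}$ lies strictly inside $\Omega_C$). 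Under the generic assumption that the distances $\{d_i(\theta^*)\}_{i=1}^N$ are pairwise distinct, the standard linear independence of exponential polynomials $\{P_i(s)\, e^{-\lambda_i s}\}_{i=1}^N$ with pairwise distinct exponents $\lambda_i$ forces each coefficient polynomial to vanish identically. For generic (e.g.\ constant or suitably non-degenerate) non-zero intensities this yields $q_i(\theta^*) = 0$ for all $i$, which is the desired condition.

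The main obstacle I anticipate is making the genericity-and-linear-independence step in the third paragraph watertight. Two degeneracies must be excluded: (i) two off-centre Dirac points coincidentally equidistant from $\boldsymbol{x}_{\theta^*}$, so that two of the decay rates $d_i(\theta^*)$ collide and the exponential-polynomial argument fails; and (ii) intensity profiles $\widetilde{\Phi}_i(t)$ specifically engineered to cancel contributions from different modes (in the constant-intensity case this collapses to the finite-dimensional linear-independence statement used above, but for general time-varying intensities one would need to invoke, for example, real-analyticity on $(0, \infty)$ together with a perturbation argument). Both degeneracies sit outside the \emph{generic} regime referenced in the lemma and are naturally ruled out by mild hypotheses on the point configuration and on the intensity functions.
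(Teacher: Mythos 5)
Your proposal is correct and follows essentially the same route as the paper: both compute $\partial_\theta\hat{\phi}$, use the orthogonality $\partial_\theta\boldsymbol{x}_\theta\cdot(\boldsymbol{x}_\theta-\boldsymbol{x}_C)=0$ to factor each summand as $\widetilde{\Phi}_i(t)\,\bigl[\partial_\theta\boldsymbol{x}_\theta\cdot(\boldsymbol{x}_\theta-\boldsymbol{x}^{(i)})\bigr]$ times a nonvanishing $(\theta,t)$-dependent term, and conclude that sufficiency is immediate while necessity holds for generic data. Your only-if direction is in fact more explicit than the paper's, which simply appeals to genericity of the $\tilde{\Phi}_i$, whereas you identify the underlying mechanism (linear independence of $P_i(s)e^{-d_i^2 s}$ with distinct decay rates) and the precise degeneracies to exclude.
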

\begin{proof}
    One computes that
    \begin{align}
        \frac{\partial}{\partial\theta} \hat{\phi}(\boldsymbol{x}_\theta,t)\ &=\ \sum_{i=0}^N \frac{\tilde{\Phi}_i(t)}{2\pi R} \exp\left(-\frac{\bigl\| \boldsymbol{x}_\theta-\boldsymbol{x}^{(i)} \bigr\|^2}{4Dt} \right) \left\{  \frac{\partial}{\partial\theta} \left[  \frac{(\boldsymbol{x}_\theta - \boldsymbol{x}_C)\cdot(\boldsymbol{x}_\theta-\boldsymbol{x}^{(i)})}{\|\boldsymbol{x}_\theta - \boldsymbol{x}^{(i)}\|^2}\right] \right.\nonumber\\
        &\qquad\qquad -\ \left.\frac{1}{4Dt} \, 2\bigl(\boldsymbol{x}_\theta -\boldsymbol{x}^{(i)}\bigr)\cdot\frac{\partial \boldsymbol{x}_\theta}{\partial\theta} \, \frac{(\boldsymbol{x}_\theta - \boldsymbol{x}_C)\cdot(\boldsymbol{x}_\theta-\boldsymbol{x}^{(i)})}{\|\boldsymbol{x}_\theta - \boldsymbol{x}^{(i)}\|^2} \right\} \label{eq:expr derivative hat phi}
    \end{align}
    Since $\partial\Omega_C$ is a circle of radius $R$, centred at $\boldsymbol{x}_C=\boldsymbol{x}^{(0)}$,
    \begin{equation}\label{eq:derivative xC zero}
        \frac{\partial \boldsymbol{x}_\theta}{\partial\theta}\cdot \bigl(\boldsymbol{x}_\theta -\boldsymbol{x}_C\bigr)\ =\ \frac{1}{2} \frac{\partial}{\partial\theta} \bigl\| \boldsymbol{x}_\theta -\boldsymbol{x}_C\bigr\|^2\ =\ 0
    \end{equation}
    and Equation~\eqref{eq:expr derivative hat phi} simplifies to
    \begin{align}
        \frac{\partial}{\partial\theta} \hat{\phi}(\boldsymbol{x}_\theta,t)\ &=\ \sum_{i=0}^N \frac{\tilde{\Phi}_i(t)}{2\pi R\bigl\| \boldsymbol{x}_\theta-\boldsymbol{x}^{(i)} \bigr\|^2} \exp\left(-\frac{\bigl\| \boldsymbol{x}_\theta-\boldsymbol{x}^{(i)} \bigr\|^2}{4Dt} \right)
        \; \frac{\partial\boldsymbol{x}_\theta}{\partial\theta}\cdot \bigl(\boldsymbol{x}_\theta -\boldsymbol{x}^{(i)}\bigr)\nonumber\\
        &\qquad\qquad\times \left\{ 1 - 2\bigl(\boldsymbol{x}_\theta - \boldsymbol{x}_C\bigr)\cdot\bigl(\boldsymbol{x}_\theta-\boldsymbol{x}^{(i)}\bigr) \left( \frac{1}{4Dt} + \frac{1}{ \bigl\| \boldsymbol{x}_\theta-\boldsymbol{x}^{(i)} \bigr\|^2 }\right) \right\}.\label{eq:simplified derivative hat phi}
    \end{align}
    If there exists a critical point at a fixed location $\boldsymbol{x}_\theta$ for all $t>0$, one must have that $\frac{\partial}{\partial\theta} \hat{\phi}(\boldsymbol{x}_\theta,t)=0$ for all $t>0$. Assuming that all $\tilde{\Phi}_i\neq 0$, this condition can be satisfied for generic $\tilde{\Phi}_i$ if and only if $\theta$ is such that Condition~\eqref{eq:condition fixed extrema} holds.
\end{proof}

Condition~\eqref{eq:condition fixed extrema} imposes a strong constraint on the possible Dirac point configuration. Note that for $i=0$ the condition is always satisfied (for the circular cell) according to Expression~\eqref{eq:derivative xC zero}. For the other $i$, note that the vector $\frac{\partial\boldsymbol{x}_\theta}{\partial\theta}$ is tangential to $\partial\Omega_C$ at $\boldsymbol{x}_\theta$. Thus, the condition requires all vectors pointing from an a-central Dirac point to the point on the boundary to be orthogonal to the boundary. Such points exist only if $N=1$ or if $N=2$ and the two a-central points are located on the same line through the centre. Thus,
\begin{corollary}\label{clry:fixed nextremes one off-centre Dirac}
    If $N=1$, then $\hat{\phi}(\boldsymbol{x}_\theta,t)$ attains extreme values for all $t>0$ at some fixed $\boldsymbol{x}_\theta$ for generic non-zero $\tilde{\Phi}_i$ if $\theta$ is such that $\boldsymbol{x}_\theta$ is on the line through centre $\boldsymbol{x}_C$ and $\boldsymbol{x}^{(1)}$.
\end{corollary}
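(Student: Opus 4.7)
The plan is to derive the corollary as a direct specialization of the preceding lemma. With $N=1$ there are only two Dirac points, $\boldsymbol{x}^{(0)}=\boldsymbol{x}_C$ and a single off-centre point $\boldsymbol{x}^{(1)}$, so Condition~\eqref{eq:condition fixed extrema} reduces to two scalar equations. The $i=0$ equation is automatic because $\partial_\theta\boldsymbol{x}_\theta\cdot(\boldsymbol{x}_\theta-\boldsymbol{x}_C)=0$ was already recorded as Equation~\eqref{eq:derivative xC zero} in the lemma's proof. Hence the entire content of the corollary lies in analysing the single remaining condition $\partial_\theta\boldsymbol{x}_\theta\cdot(\boldsymbol{x}_\theta-\boldsymbol{x}^{(1)})=0$.

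First I would split this vector as $\boldsymbol{x}_\theta-\boldsymbol{x}^{(1)} = (\boldsymbol{x}_\theta-\boldsymbol{x}_C) - (\boldsymbol{x}^{(1)}-\boldsymbol{x}_C)$ and use bilinearity. By~\eqref{eq:derivative xC zero} the first summand contributes zero, so the condition collapses to
\[
    \frac{\partial\boldsymbol{x}_\theta}{\partial\theta}\cdot(\boldsymbol{x}^{(1)}-\boldsymbol{x}_C)=0.
\]
Writing $\boldsymbol{x}_\theta=\boldsymbol{x}_C+R(\cos\theta,\sin\theta)$, the tangent vector $\partial_\theta\boldsymbol{x}_\theta = R(-\sin\theta,\cos\theta)$ spans the line orthogonal to the radial unit vector $\boldsymbol{e}_r(\theta):=(\cos\theta,\sin\theta)$. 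Therefore the displayed equation is equivalent to $\boldsymbol{x}^{(1)}-\boldsymbol{x}_C$ being a scalar multiple of $\boldsymbol{e}_r(\theta)$, i.e.\ to $\boldsymbol{x}^{(1)}$ lying on the ray through $\boldsymbol{x}_C$ in direction $\boldsymbol{e}_r(\theta)$, or equivalently, $\boldsymbol{x}_\theta$ lying on the line through $\boldsymbol{x}_C$ and $\boldsymbol{x}^{(1)}$.

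To finish, I would translate this angular characterization back into the statement. For a prescribed $\boldsymbol{x}^{(1)}\neq\boldsymbol{x}_C$, that line intersects $\partial\Omega_C$ in precisely two diametrically opposite points, at angles $\theta^*$ and $\theta^*+\pi$. By the lemma, for generic non-zero $\widetilde{\Phi}_0,\widetilde{\Phi}_1$ these are exactly the points where $\partial_\theta\hat{\phi}(\boldsymbol{x}_\theta,t)=0$ independently of $t$; smoothness and $2\pi$-periodicity of $\theta\mapsto\hat{\phi}(\boldsymbol{x}_\theta,t)$ force at least one to be a (time-fixed) maximum and the other a (time-fixed) minimum, giving the claim.

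I do not anticipate any serious obstacle: the argument is one vector-algebraic identity plus an elementary geometric interpretation of tangency, and the hard work was already done in proving the lemma. The only point worth flagging in the write-up is the \emph{genericity} caveat — if $\widetilde{\Phi}_0$ and $\widetilde{\Phi}_1$ satisfy a non-trivial algebraic relation, additional time-fixed critical points could in principle appear; this is the reason the corollary (and the lemma) are phrased for generic intensities rather than all non-zero ones.
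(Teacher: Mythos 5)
Your proposal is correct and follows essentially the same route as the paper: specialize the lemma's Condition~\eqref{eq:condition fixed extrema} to $N=1$, note that the $i=0$ equation holds automatically by~\eqref{eq:derivative xC zero}, and interpret the remaining $i=1$ orthogonality condition geometrically as collinearity of $\boldsymbol{x}_\theta$, $\boldsymbol{x}_C$ and $\boldsymbol{x}^{(1)}$ (your explicit decomposition $\boldsymbol{x}_\theta-\boldsymbol{x}^{(1)}=(\boldsymbol{x}_\theta-\boldsymbol{x}_C)-(\boldsymbol{x}^{(1)}-\boldsymbol{x}_C)$ is just a cleaner algebraic rendering of the paper's tangency argument). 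The only difference is that the paper substantiates the max/min identification by an explicit computation of $\partial\hat{\phi}_1/\partial\theta$ and a first-derivative test, whereas you appeal to periodicity; both suffice at the paper's level of rigour.
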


For the case $N=1$ and the configuration of Dirac points as in Figure~\ref{Fig_sine_1_Dirac}(a) we compute the derivative explicitly as
\begin{align*}
    \frac{\partial\hat{\phi}_1}{\partial\theta}(\boldsymbol{x}_\theta,t) &= \frac{C_1(t)r\cos(\theta)}{R^2+r^2-2Rr\sin(\theta)}\exp\left(-\frac{R^2+r^2-2Rr\sin(\theta)}{4Dt}\right) \\
    &\qquad \times \left[\frac{R^2-r^2}{R^2+r^2-2Rr\sin(\theta)}+\frac{R(R-r\sin(\theta))}{2Dt}\right], 
\end{align*}
where $C_1(t)$ is given by Equation~\eqref{eq: C1}. It is clear that the numerator of $C_1(t)$ is always non-negative, since $r\in(0,R)$. The denominator of $C_1(t)$ is also always non-negative as the numerator can be rewritten as
\begin{align*}
    \exp\left\{-\frac{(R-r)^2}{4Dt}\right\}\left((R+r)-(R-r)\exp\left\{-\frac{Rr}{Dt}\right\}\right)&\geqslant \exp\left\{-\frac{(R-r)^2}{4Dt}\right\}((R+r) - (R-r))\\
    & = 2r\exp\left\{-\frac{(R-r)^2}{4Dt}\right\}>0.
\end{align*}
Thus, we can conclude that $C_1(t)$ is always non-negative. All the terms except for $\cos(\theta)$ in the expression for $\displaystyle\frac{\partial\hat{\phi}_1}{\partial\theta}$ are strictly positive for $r\in(0, R)$. Hence, the critical points are $\displaystyle\frac{\pi}{2}$ and $\\displaystylefrac{3\pi}{2}$ in accordance with Corollary~\ref{clry:fixed nextremes one off-centre Dirac}. By the first-order derivative test, the maximum occurs at $\frac{\pi}{2}$ and the minimum occurs at $\displaystyle\frac{3\pi}{2}$, which are the same as $\phi(\boldsymbol{x}_\theta) = \phi_0+A\sin(\theta)$.

\begin{corollary}
    If $N=2$, then $\hat{\phi}(\boldsymbol{x}_\theta,t)$ attains extreme values for all $t>0$ at some fixed $\boldsymbol{x}_\theta$ for generic $\tilde{\Phi}_i$ if $\boldsymbol{x}^{(1)}$ and $\boldsymbol{x}^{(2)}$ lie on a line through the centre $\boldsymbol{x}^{(0)}$. Then, $\theta$ is such that $\boldsymbol{x}_\theta$ lies on this line.
\end{corollary}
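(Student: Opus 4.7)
[Proof proposal for Corollary \ref{clry:fixed nextremes one off-centre Dirac}, case $N=2$]
The plan is to apply the preceding lemma directly and reduce the statement to a purely geometric observation about vectors in the plane. By the lemma, the approximate flux density $\hat{\phi}(\boldsymbol{x}_\theta,t)$ admits a critical point at a $\theta$-value that is fixed for all $t>0$ (for generic nonzero $\tilde\Phi_i$) if and only if
\[
    \frac{\partial\boldsymbol{x}_\theta}{\partial\theta}\cdot\bigl(\boldsymbol{x}_\theta-\boldsymbol{x}^{(i)}\bigr)=0 \qquad\text{for every } i=0,1,2.
\]
The case $i=0$ is automatic from Equation~\eqref{eq:derivative xC zero}, because $\partial\Omega_C$ is a circle centred at $\boldsymbol{x}^{(0)}=\boldsymbol{x}_C$.

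The core of the argument is then the remaining conditions for $i=1,2$. First I would note that $\partial\boldsymbol{x}_\theta/\partial\theta$ is a nonzero tangent vector to $\partial\Omega_C$ at $\boldsymbol{x}_\theta$, and hence its orthogonal complement in $\R^2$ is the one-dimensional radial line $\mathrm{span}\{\boldsymbol{x}_\theta-\boldsymbol{x}_C\}$. Consequently, the orthogonality condition for $i=1,2$ is equivalent to requiring that both $\boldsymbol{x}_\theta-\boldsymbol{x}^{(1)}$ and $\boldsymbol{x}_\theta-\boldsymbol{x}^{(2)}$ are parallel to $\boldsymbol{x}_\theta-\boldsymbol{x}_C$. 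Since $\boldsymbol{x}_\theta\neq\boldsymbol{x}_C$, this forces $\boldsymbol{x}^{(1)}, \boldsymbol{x}^{(2)}, \boldsymbol{x}_C$ and $\boldsymbol{x}_\theta$ to be collinear; hence $\boldsymbol{x}^{(1)}$ and $\boldsymbol{x}^{(2)}$ lie on a line through $\boldsymbol{x}_C$ and $\boldsymbol{x}_\theta$ lies on that same line.

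Conversely, if $\boldsymbol{x}^{(1)}$ and $\boldsymbol{x}^{(2)}$ lie on a common line $\ell$ through $\boldsymbol{x}_C$, then $\ell$ intersects $\partial\Omega_C$ in exactly two antipodal points, and for the two corresponding values of $\theta$ the radial vector $\boldsymbol{x}_\theta-\boldsymbol{x}_C$ is along $\ell$, so that both $\boldsymbol{x}_\theta-\boldsymbol{x}^{(1)}$ and $\boldsymbol{x}_\theta-\boldsymbol{x}^{(2)}$ are parallel to $\partial\boldsymbol{x}_\theta/\partial\theta^\perp$; the lemma then guarantees a critical point of $\theta\mapsto\hat{\phi}(\boldsymbol{x}_\theta,t)$ at these $\theta$-values for every $t>0$. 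This proves the characterisation.

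The last step, promoting "critical point" to "extreme value" (as in Corollary~\ref{clry:fixed nextremes one off-centre Dirac} for $N=1$), is the only potentially delicate point. The plan is to mirror the direct computation done after Corollary~\ref{clry:fixed nextremes one off-centre Dirac}: substitute the explicit intensities $\widetilde\Phi_C(t),\widetilde\Phi_D(t)$ from Expressions~\eqref{Eq_tripole_sol}--\eqref{eq:phiC triple points} and the specific location $\boldsymbol{x}^{(2)}=-\boldsymbol{x}^{(1)}$ (with respect to $\boldsymbol{x}_C$) into Equation~\eqref{eq:simplified derivative hat phi}, factor out the $\theta$-dependent trigonometric factor analogous to $\cos\theta$, and apply a first-derivative sign test around each of the two collinear points. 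Since the remaining factor admits a uniform positivity bound as in the $N=1$ analysis, I expect the sign test to identify one $\theta$ as a maximum and the opposite as a minimum without new difficulty. The main obstacle I foresee is purely bookkeeping: keeping the three Gaussian exponentials with different arguments organised so that the sign of the bracketed factor can be read off, as was done for $C_1(t)$ in the $N=1$ case.
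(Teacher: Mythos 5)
Your proposal is correct and follows essentially the same route as the paper: apply the preceding lemma and observe that orthogonality of $\boldsymbol{x}_\theta-\boldsymbol{x}^{(i)}$ to the tangent vector $\partial\boldsymbol{x}_\theta/\partial\theta$ forces $\boldsymbol{x}^{(1)}$, $\boldsymbol{x}^{(2)}$, $\boldsymbol{x}_C$ and $\boldsymbol{x}_\theta$ to be collinear. The paper likewise leaves the upgrade from ``critical point'' to ``extreme value'' to an explicit sign computation that it carries out only for $N=1$ (relying on symmetry and numerical confirmation for $N=2$), so your sketched first-derivative test is consistent with, and if completed would slightly strengthen, the paper's treatment.
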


In case $N=2$ with $\boldsymbol{x}^{(1)}$ and $\boldsymbol{x}^{(2)}$ on a line through the centre, at equal distance on opposite sides thereof, such as in Figure~\ref{Fig_general_Dirac}\subref{Fig_2_Diracs}, there exists a fixed location of an extreme value of $\hat{\phi}(\boldsymbol{x}_\theta,t)$ for all $t>0$ also on the line through the centre, orthogonal to the line through $\boldsymbol{x}^{(1)}$ and $\boldsymbol{x}^{(2)}$, provided $\tilde{\Phi}_1=\tilde{\Phi}_2$. Indeed, for $\boldsymbol{x}_\theta$ on this orthogonal line, with the symmetric configuration of $\boldsymbol{x}^{(1)}$ and $\boldsymbol{x}^{(2)}$,
\[
    (\boldsymbol{x}_\theta - \boldsymbol{x}_C\bigr)\cdot\bigl(\boldsymbol{x}_\theta-\boldsymbol{x}^{(1)}\bigr)=(\boldsymbol{x}_\theta - \boldsymbol{x}_C\bigr)\cdot\bigl(\boldsymbol{x}_\theta-\boldsymbol{x}^{(2)}\bigr)
\]
while
\[
    \frac{\partial\boldsymbol{x}_\theta}{\partial\theta}\cdot \bigl(\boldsymbol{x}_\theta -\boldsymbol{x}^{(1)}\bigr) = -\frac{\partial\boldsymbol{x}_\theta}{\partial\theta}\cdot \bigl(\boldsymbol{x}_\theta -\boldsymbol{x}^{(2)}\bigr).
\]
Thus, the location of the fixed extremes for all time of $\hat{\phi}$ for the given set-up are the same as those of $\phi(\boldsymbol{x}_\theta) = \phi_0+A\sin(2\theta)$.

For $N\geq 3$, there cannot be a configuration of Dirac points, other than all on the same line through the centre, such that Condition~\eqref{eq:condition fixed extrema} holds. In that case, the $\tilde{\Phi}_i$ cannot be arbitrary (non-zero). Then, symmetry of the configuration of Dirac points together with relations among the $\tilde{\Phi}_i$ may still result in $\frac{\partial}{\partial\theta} \hat{\phi}(\boldsymbol{x}_\theta,t)=0$ for all $t>0$. We shall not pursuit this here any further.

\section{Convergence of $\hat{\phi}_\infty$ to $\phi$ in Section~\ref{Sec_Multi}}\label{App_proof_Dirac}
Figure~\ref{Fig_sine_1_Dirac} and Figure~\ref{Fig_sine_2_Dirac} indicate the convergence of $\hat{\phi}_\infty$ to $\phi$, when $r\rightarrow0^+$. The propositions below verifies this convergence analytically for $n=1$ and $n=2$, respectively. 
\begin{proposition}
    Given $\phi_1(\theta) = \phi_0+A\sin(\theta)$, where $\phi_0$ and $A$ are positive constants. Then the flux over the cell boundary $\partial\Omega_C$ from $(\rm BVP_p)$ at the steady state, computed by Equation~\eqref{Eq_flux_stst_1}, converges to $\phi_1(\theta)$ when $r\rightarrow0^+$.
\end{proposition}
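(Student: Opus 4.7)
The plan is to prove the convergence by direct evaluation of the pointwise limit of the explicit closed-form expression in Equation~\eqref{Eq_flux_stst_1}. Since
\[
    \hat{\phi}_{\infty}(\theta;r)\ =\ \phi_0+A-\frac{A(R+r)^2(1-\sin\theta)}{R^2-2Rr\sin\theta+r^2},
\]
the entire argument reduces to controlling the fractional term as $r\downarrow 0$.

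First, I would verify that the denominator is bounded away from $0$ uniformly in $\theta$ for all sufficiently small $r>0$. Indeed, completing the square gives $R^2-2Rr\sin\theta+r^2\geq (R-r)^2>0$ for any $\theta\in[0,2\pi)$ whenever $0<r<R$. This ensures both that $\hat{\phi}_\infty(\theta;r)$ is well-defined for the relevant range of $r$ and that the fractional term depends continuously on $r$ at $r=0$.

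Next, I would evaluate the numerator and denominator at $r=0$. The numerator becomes $AR^2(1-\sin\theta)$ and the denominator becomes $R^2$, so the fraction tends to $A(1-\sin\theta)$. Substituting back,
\[
    \lim_{r\downarrow 0}\hat{\phi}_{\infty}(\theta;r)\ =\ \phi_0+A-A(1-\sin\theta)\ =\ \phi_0+A\sin\theta\ =\ \phi_1(\theta),
\]
which is the claimed identity. As a minor bonus observation one can note that, since the denominator is uniformly bounded below by $(R-r)^2$ and the numerator is uniformly bounded above in $\theta$, the convergence is in fact uniform in $\theta\in[0,2\pi)$; this may be worth recording as part of the statement or a remark, but is not needed for pointwise convergence.

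There is no real obstacle here: the statement is essentially a continuity-at-$r=0$ remark for a rational function in $r$ whose denominator does not vanish at $r=0$. The only pitfall to avoid would be accidentally letting $r\to R$ (where the coefficient expressions for $\widetilde{\Phi}_D,\widetilde{\Phi}_C$ degenerate) or passing the limit under the $t\to\infty$ operation in the wrong order; since Equation~\eqref{Eq_flux_stst_1} has already been established as the $t\to\infty$ limit of $\hat{\phi}_1(\theta,t)$, we only need the final $r\downarrow 0$ limit, and the proof is complete in two or three lines.
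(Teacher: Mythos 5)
Your proposal is correct and follows essentially the same route as the paper's own proof: substitute into the closed-form expression~\eqref{Eq_flux_stst_1} and evaluate the pointwise limit as $r\downarrow 0$, obtaining $\phi_0+A-A(1-\sin\theta)=\phi_1(\theta)$. Your additional observations (the lower bound $(R-r)^2$ on the denominator and the resulting uniformity of the convergence in $\theta$) are a small refinement the paper does not record, but the core argument is identical.
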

\begin{proof}
Substituting the expressions of $\widetilde{\Phi}_D$ and $\widetilde{\Phi}_C$ in Equation~\eqref{Eq_dipole_sol}, we obtain Equation~\eqref{Eq_flux_stst_1}:
\begin{equation}
    \hat{\phi}_\infty(\theta; r) = \phi_0 + A - \frac{A(R+r)^2(1-\sin(\theta))}{R^2+r^2-2Rr\sin(\theta)}. \tag{\ref{Eq_flux_stst_1}}
\end{equation}
Then, we proceed further by sending $r\rightarrow0^+$:
\begin{align*}
    \lim_{r\rightarrow0^+}\hat{\phi}_\infty(\theta; r) &= \phi_0+A-\frac{AR^2(1-\sin(\theta))}{R^2}\\
    &= \phi_0 + A\sin(\theta) 
\end{align*}
Hence, we proved the proposition.
\end{proof}

Numerically it can be seen that the theorem above also holds for any integer $n$ in the predefined flux density $\phi = \phi_0+A\sin(n\theta)$; see Figure~\ref{Fig_sine_2_3} for $n = 2$ and $n=3$ as examples. 
\begin{figure}
    \centering
    \subfigure[$\phi = \phi_0+A\sin(3\theta)$]{
    \includegraphics[width = 0.75\textwidth]{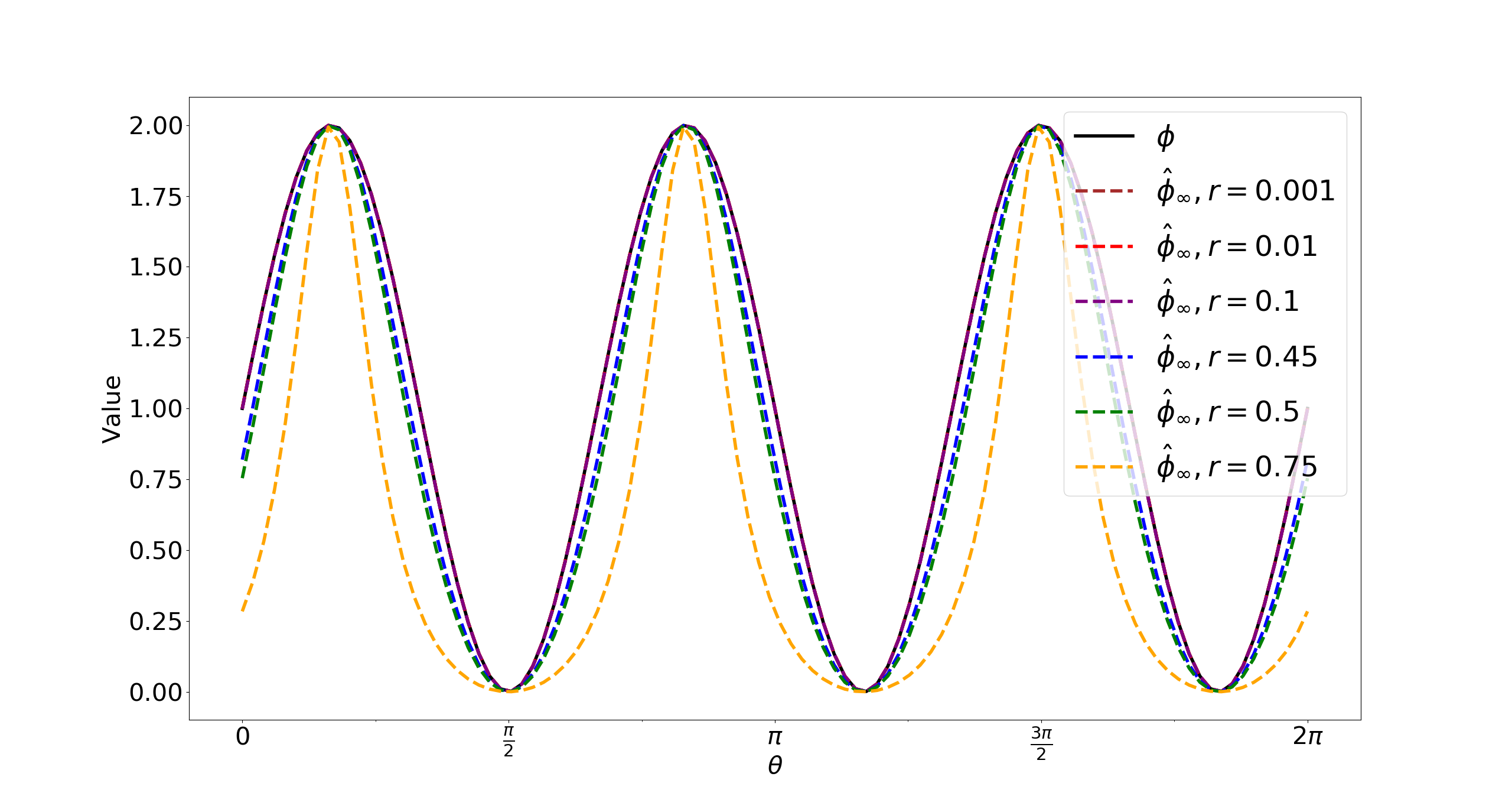}}
    \subfigure[$\phi = \phi_0+A\sin(4\theta)$]{
    \includegraphics[width = 0.75\textwidth]{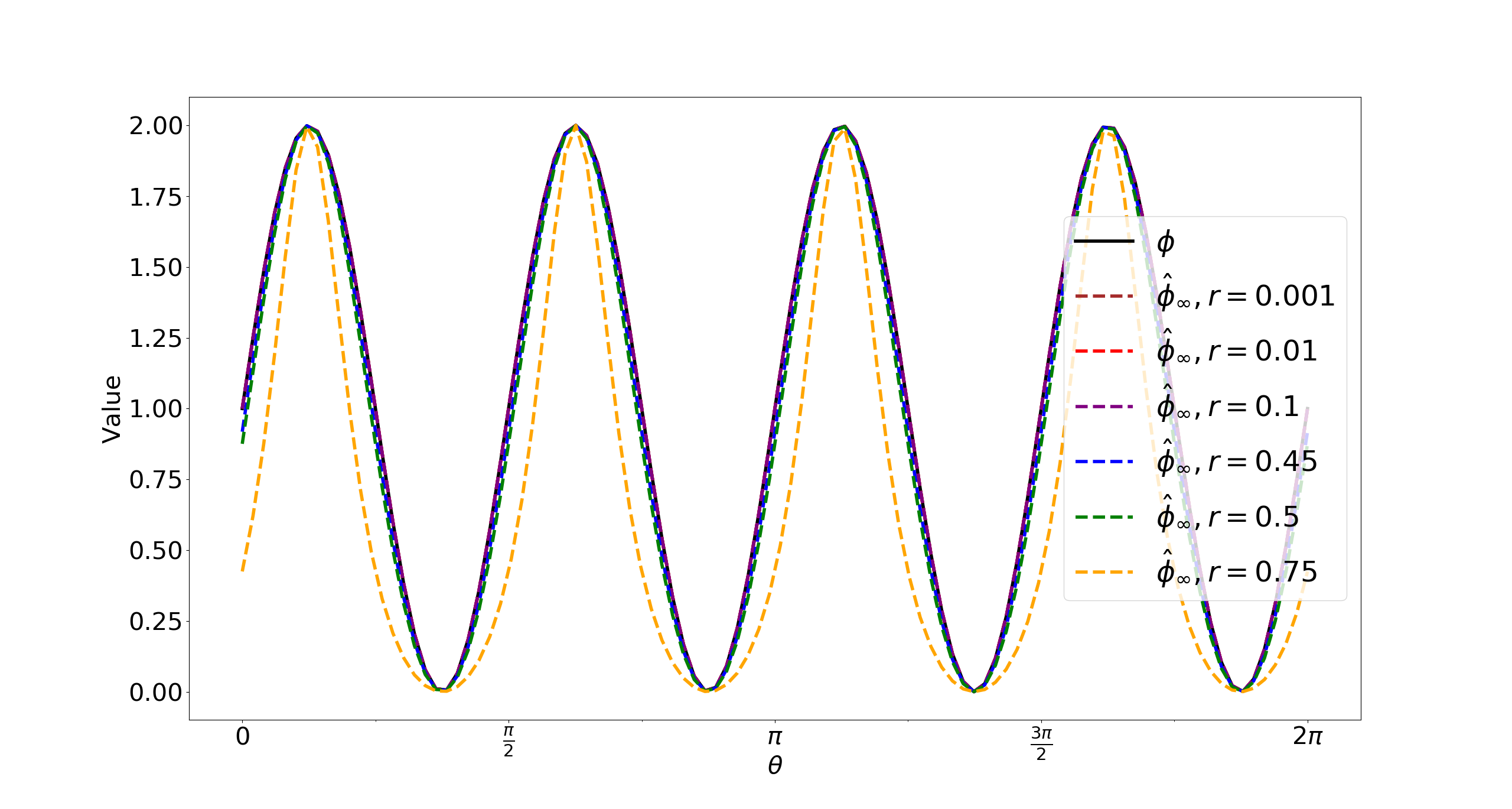}}
    \caption{Examples of $n=3$ and $n=4$ with $\widetilde{\Phi}_D$ and $\widetilde{\Phi}_C$ computed by System~\eqref{eq:conditions for tilde_Phi}.}
    \label{Fig_sine_2_3}
\end{figure}

For $n=2$, the convergence can be claimed by the proposition:
\begin{proposition}
    Given $\phi_2(\theta) = \phi_0+A\sin(2\theta)$, where $\phi_0$ and $A$ are positive constants. When $r\rightarrow0^+$, the flux over the cell boundary $\partial\Omega_C$ from $(BVP_P)$ at steady state (i.e. Equation~\eqref{Eq_2_Dirac_bnd_flux_stst}) converges to $\phi_2(\theta)$
\end{proposition}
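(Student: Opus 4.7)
The plan is to show that the two apparently singular $1/r^2$ contributions in Equation~\eqref{Eq_2_Dirac_bnd_flux_stst} cancel as $r\downarrow 0^+$, leaving an explicit limit that simplifies to $\phi_0+A\sin(2\theta)$. The structural observation is that $\hat{\phi}_{2,\infty}(\theta;r)-(\phi_0-A)$ can be written as a single rational function of $r^2$ whose numerator, after simplification, carries a factor of $r^2$ that kills the apparent pole.

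Concretely, I would first rewrite
\begin{equation*}
    \hat{\phi}_{2,\infty}(\theta;r) - (\phi_0 - A)\ =\ \frac{A(R^2-r^2)}{r^2}\left[\frac{(R^2+r^2)(R^2-r^2\sin(2\theta))}{R^4+r^4-2R^2r^2\sin(2\theta)}\, -\, 1\right],
\end{equation*}
and then compute the bracketed numerator by direct expansion. The key algebraic identity I expect to verify is
\begin{equation*}
    (R^2+r^2)(R^2-r^2\sin(2\theta))\, -\, (R^4+r^4-2R^2r^2\sin(2\theta))\ =\ r^2(R^2-r^2)(1+\sin(2\theta)),
\end{equation*}
which is a routine but careful expansion in $r^2$ and $\sin(2\theta)$. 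Substituting this back cancels the prefactor $1/r^2$ and produces
\begin{equation*}
    \hat{\phi}_{2,\infty}(\theta;r) - (\phi_0-A)\ =\ \frac{A(R^2-r^2)^2(1+\sin(2\theta))}{R^4+r^4-2R^2r^2\sin(2\theta)}.
\end{equation*}

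With this clean form, the conclusion is immediate: as $r\downarrow 0^+$ the numerator tends to $AR^4(1+\sin(2\theta))$ and the denominator tends to $R^4$, so the right-hand side tends to $A(1+\sin(2\theta))$, uniformly in $\theta$. Adding back $\phi_0-A$ yields $\phi_0+A\sin(2\theta)=\phi_2(\theta)$, which is the desired pointwise (in fact uniform) limit.

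The main obstacle is purely algebraic rather than conceptual: one must resist being misled by the $1/r^2$ factor into concluding divergence, and must perform the expansion of the bracket correctly so that the factor $r^2(R^2-r^2)$ becomes visible. Once the identity above is in hand, the limit step is trivial, and the proof mirrors the $n=1$ case treated in the preceding proposition. The same strategy (write $\hat{\phi}_{n,\infty}-(\phi_0-A)$ as a rational function of $r^2$, verify that the numerator vanishes to sufficient order at $r=0$) is what one would pursue for general $n$ in the theoretically oriented follow-up paper referenced by the authors.
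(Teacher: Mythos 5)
Your proposal is correct and follows essentially the same route as the paper's own proof: both combine the two $1/r^2$ terms over the common denominator $R^4+r^4-2R^2r^2\sin(2\theta)$ so the apparent pole cancels, and then take $r\to 0^+$ to obtain $\phi_0+A\sin(2\theta)$. Your factored form $A(R^2-r^2)^2(1+\sin(2\theta))$ of the numerator (the paper leaves it expanded) and the remark on uniformity in $\theta$ are correct but only presentational refinements of the same computation.
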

\begin{proof}
    We combine the fractions in Equation~\eqref{Eq_2_Dirac_bnd_flux_stst}. Then it gives 
    \begin{align*}
        \hat{\phi}_{2,\infty}(\theta;r) = \phi_0-A+\frac{Ar^4-AR^2r^2+Ar^4\sin(2\theta)-AR^2r^2+AR^4+AR^4\sin(2\theta)-2AR^2r^2\sin(2\theta)}{R^4+r^4-2R^2r^2\sin(2\theta)}.
    \end{align*}
    Taking $r\rightarrow0^+$ yields point-wise in $\theta$:
    \begin{align*}
        \lim_{r\rightarrow0^+}\hat{\phi}_{2,\infty}(\theta;r) = \phi_0-A+\frac{AR^4+AR^4\sin(2\theta)}{R^4} = \phi_0+A\sin(2\theta).
    \end{align*}
\end{proof}

\section{Shape of the intensity of Dirac points for $n=1$ in Section~\ref{Subsec_1_Dirac}}\label{App_Phi_D_Phi_C}
\noindent
Recall the solution to System~\eqref{Eq_dipole} that is stated in Equation~\eqref{Eq_dipole_sol}:
\begin{equation}
    \left\{
    \begin{aligned}
        \widetilde{\Phi}_D(t) &= \frac{4\pi A}{\frac{1}{R-r}\exp\left\{-\frac{(R-r)^2}{4Dt}\right\}-\frac{1}{R+r}\exp\left\{-\frac{(R+r)^2}{4Dt}\right\}},\\
        \widetilde{\Phi}_C(t) &= 2\pi R\exp\left\{\frac{R^2}{4Dt}\right\}\left(\phi_0+A-\frac{2A(R+r)}{(R+r)-(R-r)\exp\left\{-\frac{Rr}{Dt}\right\}}\right). 
    \end{aligned}
    \right.
    \tag{\ref{Eq_dipole_sol}}
\end{equation}
We start with $\widetilde{\Phi}_D$. Regarding the location of the off-centre Dirac point, it is clear that $r$ cannot be $0$ or $R$, which will result in a singular solution and $\widetilde{\Phi}_D(t) = 0$ for any $t$, respectively. Moreover, when $t \rightarrow 0$, $\Phi_D$ becomes infinity. The typical shape of $\widetilde{\Phi}_D$ is exhibited in Figure~\ref{Fig_Phi_D_Appendix}. When $t$ is close to zero, the value of $\widetilde{\Phi}_D$ is very large then as $t$ increases, the value drops down rapidly to the minimum at $\displaystyle t_{min} = D\frac{\ln\frac{R+r}{R-r}}{Rr}$. Afterwards, the value increases again and gradually reaches the steady state with $t$ large.
\begin{figure}[h!]
    \centering
    \includegraphics[width = 0.75\textwidth]{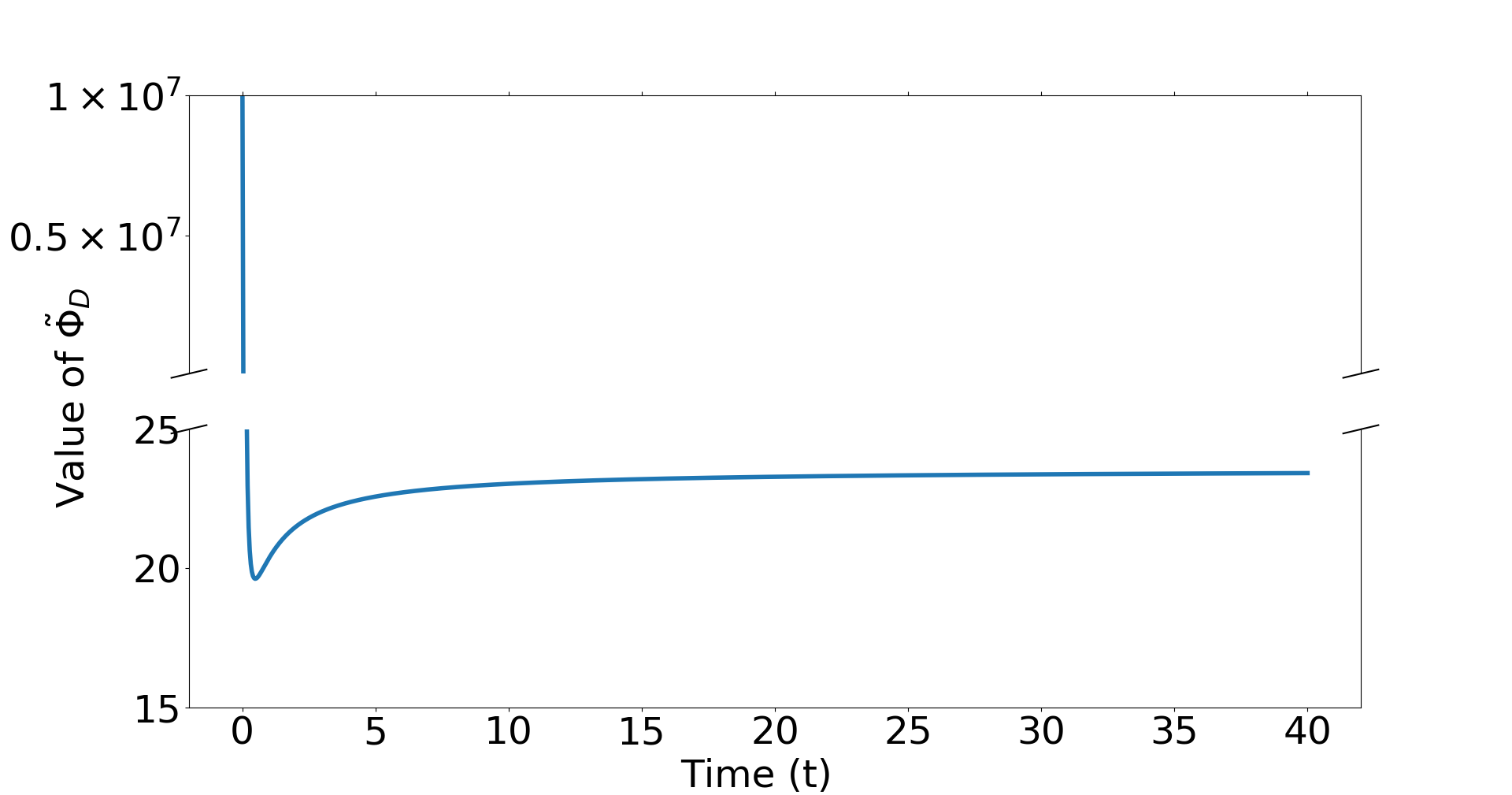}
    \caption{A typical shape of $\widetilde{\Phi}_D$ as a function of time $t$. Here, $r=0.25$ and $A/\phi_0 = 1$.}
    \label{Fig_Phi_D_Appendix}
\end{figure}

As for $\widetilde{\Phi}_C(t)$, there are two possible cases: (1) the function is a monotonically decreasing function; or (2) the function contains one local minimum and one local maximum, respectively. In Figure~\ref{Fig_Phi_C_Appendix}, we show the examples of each case. According to the analysis, the value of $\beta:=r/R$ is decisive for the shape of $\widetilde{\Phi}_C(t)$:
\begin{enumerate}
    \item[(1)] When $\beta\geqslant\frac{1}{4}$ and for any value of $\rho$, $\widetilde{\Phi}_C$ is a decreasing function over $(0, +\infty)$.
    \item[(2)] When $\beta<\frac{1}{4}$ and $\frac{8\beta}{16\beta^2+1}<\rho<1<\frac{1}{4\beta}$, then $\widetilde{\Phi}_C$ decreases first in $(0, t_1)$ then increases in $(t_1, t_2)$, afterwards it increases again in $(t_2, +\infty)$, where $t_1$ is the local minimum and $t_2$ is the local maximum. 
\end{enumerate}
\begin{figure}[h!]
    \centering
    \subfigure[Monotonically decreasing: $\rho=A/\phi_0 = 0.1$ and $\beta = \frac{r}{R} = 0.6$]{
    \includegraphics[width = 0.75\textwidth]{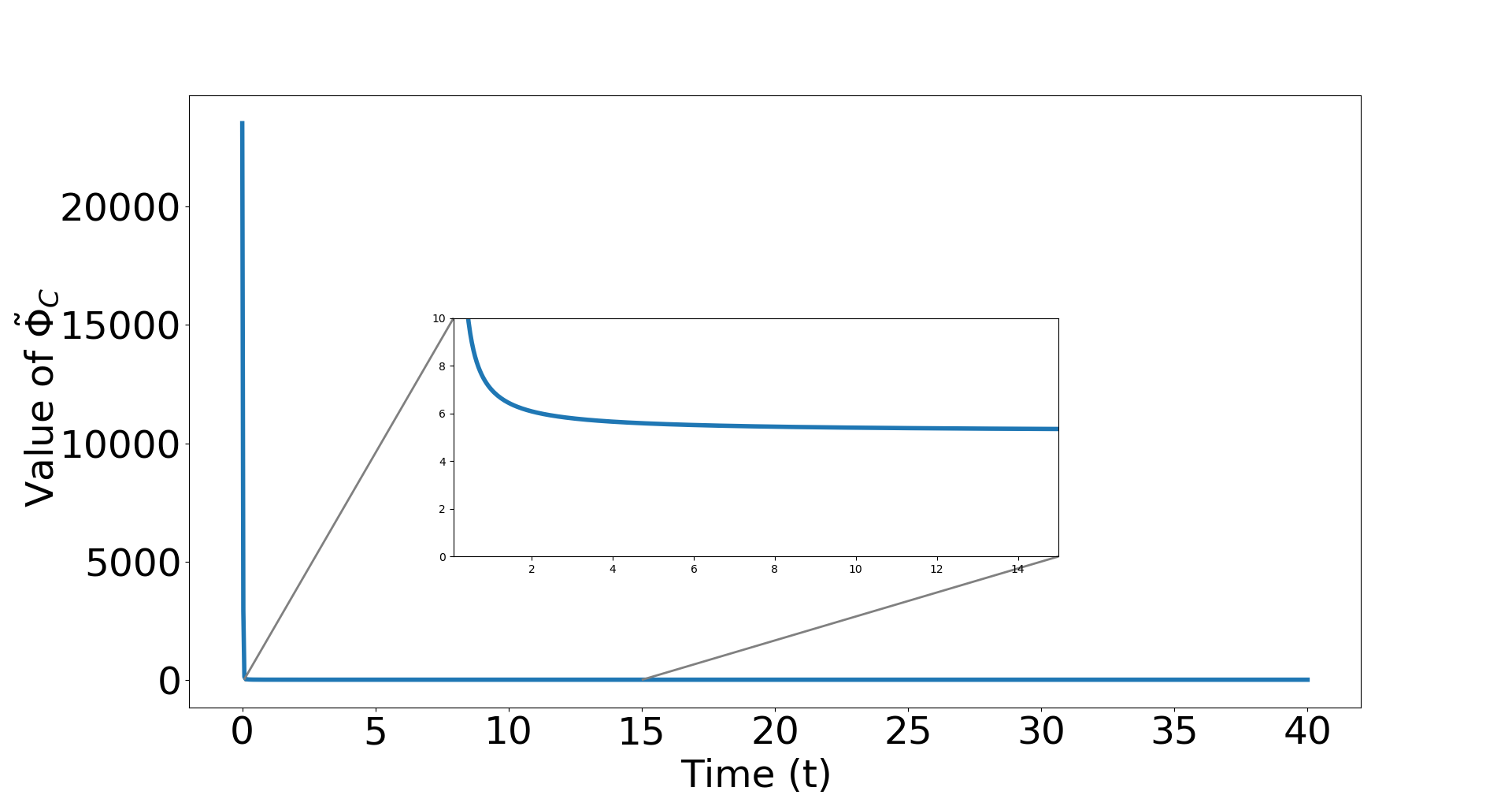}}
    \subfigure[Containing two local extrema: $\rho=A/\phi_0 = 0.1$ and $\beta = \frac{r}{R} = 0.01$]{
    \includegraphics[width = 0.75\textwidth]{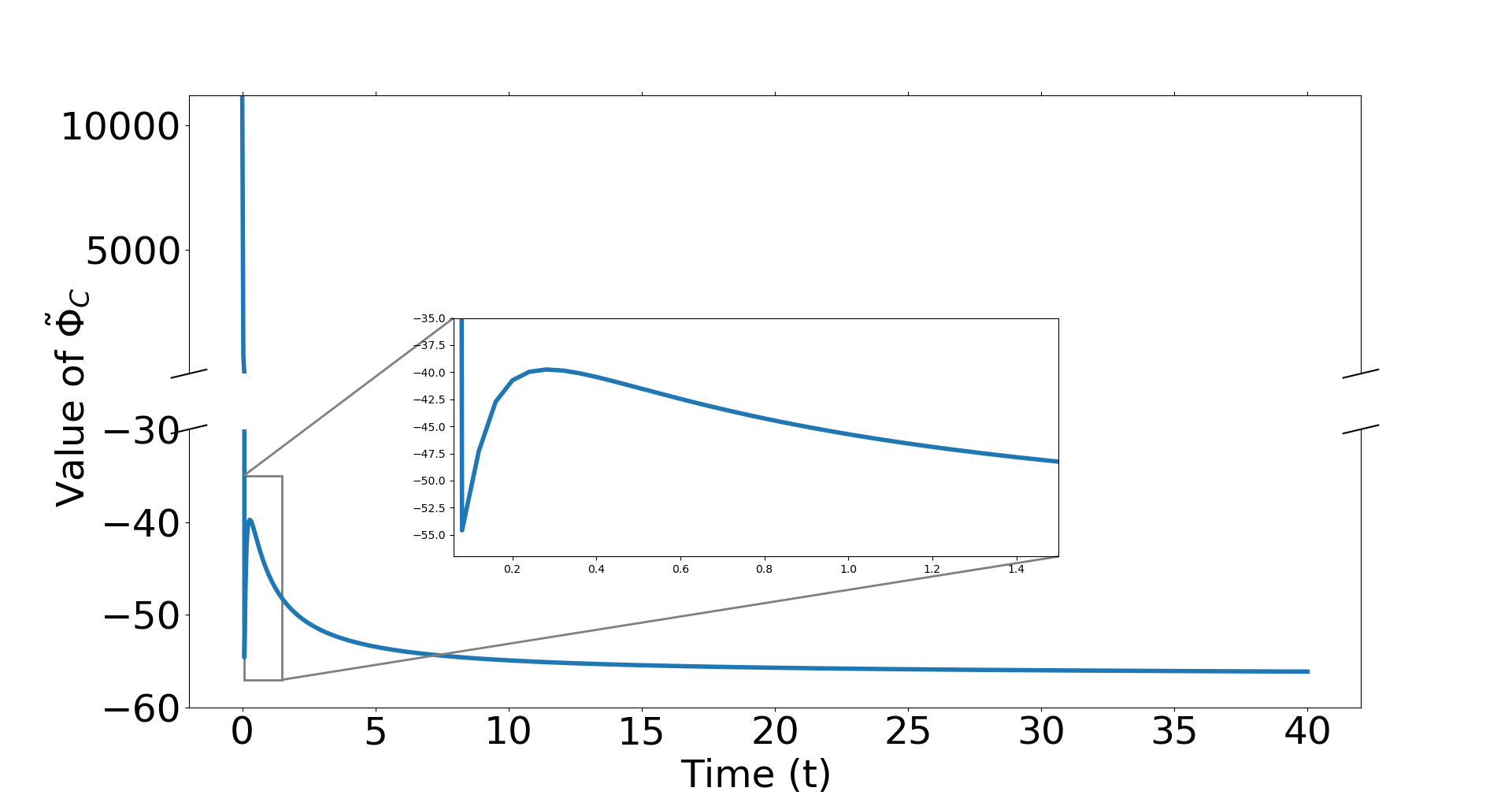}
    }
    \caption{The typical shape of $\widetilde{\Phi}_C(t)$ with different sets of parameter values. Here, the diffusion coefficient $D = 1.0$, the cell radius $R=1.0$ and the constant in the flux density $\phi_0 = 1.0$.}
    \label{Fig_Phi_C_Appendix}
\end{figure}

\end{appendix}

\end{document}